\documentclass[11pt]{article}
\usepackage[utf8]{inputenc} 
\usepackage[T1]{fontenc} 
\usepackage{amsmath}
\usepackage{graphicx}
\usepackage{amsfonts}
\usepackage{amssymb}
\usepackage{xcolor}
\usepackage{hyperref}
\usepackage{amsthm}
\usepackage{amscd}
\usepackage{mathrsfs}
\usepackage{float}
\usepackage{appendix}
\usepackage{subfigure}
\usepackage{glossaries}
\usepackage{stmaryrd}
\usepackage{makeidx}
\usepackage{geometry}
\usepackage{enumerate}
\usepackage{authblk}

\textwidth 6.6in
\topmargin -0.8in
\textheight 9.2in
\oddsidemargin -0.25in
\evensidemargin -0.25in

\newtheorem{thm}{Theorem}[section]

\newtheorem{corollary}[thm]{Corollary}
\newtheorem{proposition}[thm]{Proposition}
\newtheorem{prop}[thm]{Proposition}
\newtheorem{lemma}[thm]{Lemma}

\newtheorem{remark}[thm]{Remark}
\newtheorem{example}[thm]{Example}
\newtheorem{definition}[thm]{Definition}
\newtheorem{assumption}{Assumption}

\newcommand{\norm}[1]{\|{#1}\|}

\renewcommand{\Phi}{\varPhi}
\newcommand{\R}{{\mathbb{R}}}

\newcommand{\C}{{\mathbb{C}}}

\newcommand{\N}{{\mathbb{N}}}
\DeclareMathOperator{\GL}{GL}

\definecolor{dkgreen}{rgb}{0,0.4,0}
\newcommand{\MK}[1]{{\color{dkgreen}{#1}}}
\definecolor{dkgreen}{rgb}{0,0.4,0}

\usepackage[normalem]{ulem}

\makeindex

\begin{document}
\title{Symmetry reduction and recovery of trajectories of optimal control problems via measure relaxations} 
 
\author[1]{Nicolas Augier}
\author[2]{Didier Henrion}
\author[2]{Milan Korda}
\author[2]{Victor Magron}
\affil[1]{\footnotesize 
Mac Team, LAAS-CNRS, Toulouse, France
}
\affil[2]{\footnotesize 
Pop Team, LAAS-CNRS, Toulouse, France
}

    \maketitle
    \begin{abstract}
We address the problem of symmetry reduction of optimal control problems under the action of a finite group from a  measure relaxation viewpoint. We propose a method based on the moment-SOS aka Lasserre hierarchy which allows one to significantly reduce the computation time and memory requirements compared to the case without symmetry reduction. We show that the recovery of optimal trajectories boils down to solving a symmetric parametric polynomial system. 
Then we illustrate our method on the symmetric integrator and the time-optimal inversion of qubits.
    \end{abstract}

\section{Introduction}

While symmetries and associated computer algebra methods have been extensively studied in the framework of dynamical systems~\cite{CL00,G00,HUBERT,FANTUZZI20}, few results have been obtained in this direction for control systems.
The need for efficient algorithms providing global solutions to optimal control is a key challenge. A significant effort has been put into exploiting symmetries in extremal trajectories with the Pontryagin Maximum Principle (see~\cite{OHSAWA_SYMMETRIES,SUSSMANN_SYMMETRIES}). 
In particular, in~\cite{OHSAWA_SYMMETRIES} the author defines a reduced Hamiltonian system via Poisson reduction for Lie group symmetries. However, to the best of our knowledge, no efficient numerical method has been implemented in this setting.
On the other hand, symmetry reduction has been successfully applied to problems of static constrained polynomial optimization~\cite{SYMSDP}, especially when the optimization problem is invariant under the action of a finite group. 
It turned out that a symmetry adapted numerical solution allows one to significantly reduce the computation effort without compromising accuracy.
The literature on exploiting symmetries for globally solving optimal control problems is much sparser. In financial mathematics, symmetries of the Hamilton-Jacobi-Bellman (HJB) equation attached to specific stochastic processes were exploited in \cite{nal05,los07}. In \cite{rhc16} the authors are exploiting symmetries of a particular class of optimal control problems with the objective of generating benchmark HJB equations with explicit analytic solutions. Symmetries in an infinite-dimensional linear programming relaxation of the problem of extreme value computation were exploited in~\cite{FANTUZZI20}.

This paper complements recent efforts to improve the scalability of the moment-SOS hierarchy to perform more efficient analyses of dynamical systems with sparse input data~\cite{schlosser2020sparse,wang2021sparsejsr,wang2021exploiting}; see also the recent survey\MK{s} \MK{\cite{sparsebook, zheng2021chordal}} describing several sparsity exploiting techniques for polynomial optimization problems.
Here, we develop a systematic method of symmetry reduction for optimal control problems, when the symmetry is induced by a finite group (infinite symmetry groups are beyond the scope of the present paper), using the measure relaxation formulation introduced in~\cite{vinter1993} and its numerical solution via
the moment-SOS (sum of squares) aka Lasserre hierarchy, as originally proposed in~\cite{OCP08}.
In particular, we show that the solution of the optimal control problem can be reduced to solving semi-definite programming (SDP) problems possessing symmetry invariance properties.
To the best of the authors' knowledge, the problem of symmetry reduction of optimal control problems has not been studied yet with measure relaxations.
The strength of our approach is that it provides a globally converging method for nonlinear optimal control via the solution of convex SDP problems. The symmetry reduction allows one to compute lower bounds on the optimal cost in a more efficient way, which is a crucial task for  applications, including those from quantum control~\cite{gatto}: among them one can cite the computation of the so-called \emph{quantum speed limit}~\cite{QSL}, or biological systems, which are often composed of large-scale networks of structurally similar dynamical systems having symmetries \cite{GOLUB,GOLUB2}.

Another contribution of this paper is to propose a method allowing one to recover the moments of optimal trajectories.
In the polynomial optimization case without symmetry reduction,  specific tools for extraction of minimizers have been developed in~\cite{Henrion2005} based on flat extensions~\cite{curto2000truncated}. The distinguishing feature of this method is the possibility to extract {all} global minimizers provided there are only finitely many of them and certain genericity conditions are satisfied. In the optimal control case, the question of recovery of optimal trajectories is much more intricate, mainly due to the fact that the dynamical setting provides an infinite number of moment conditions for which the flat extension condition  cannot be satisfied.
The non-uniqueness of optimal trajectories for an optimal control problem is a classical feature (see for instance \cite{SUBRIEM} for an overview in the sub-Riemannian setting), and is related to the notion of the so-called \emph{cut locus}.
We will see along the article that this phenomenon appears naturally for optimal control problems admitting symmetries and is an obstacle to the recovery of optimal trajectories when using the moment-SOS hierarchy. This difficulty has already been underlined in the context of Generalized Moment Problem in~\cite[Remark 13]{TACCHI22}.
We propose here a practical reconstruction method of the state and control trajectories in the case of systems having symmetries, both using invariant polynomials and the characterization of occupation measures as extreme points of the set of solution of the relaxed problem given in~\cite{vinter1993}. 

\subsection{Contributions}

The main results of the paper can be summarized as follows:
\begin{enumerate}
\item We adapt the symmetry-adapted SDP scheme developed in~\cite{SYMSDP} to the optimal control setting, with convergence guarantees, allowing one to provide more precise lower bounds on the cost than  without exploiting symmetry and at a lower computational cost.
\item We propose a method of approximation of optimal trajectories by using invariant polynomials and a selection algorithm of occupation measures.  Our algorithm can be decomposed into the resolution of two successive symmetry-reduced moment-SOS hierarchies for which the uniqueness of the solution is guaranteed.
\end{enumerate}
We illustrate theses results by an efficient recovery of time-optimal trajectories for two-level quantum systems, using  a symmetry-adapted moment-SOS hierarchy.

\section{Notations}
\begin{itemize}
\item For $n\geq 1$, denote the group of invertible $n\times n$ matrices by $GL_n(\R)$.
Denote the direct sum of two subspaces $E_1$ and $E_2$ of a vector space $V$ by $E_1\bigoplus E_2$, and for two matrices $(M_1,M_2)\in \GL_n(\R)\times \GL_m(\R)$ with $n,m\geq 1$, denote the $(n+m) \times (n+m)$ block matrix composed of the two blocks $M_1$ and $M_2$ by $M_1\bigoplus M_2$.
\item We let $L^\infty(X,S)$ denote the Banach space of essentially bounded measurable functions on a set $X$ with values in a set $S$ with the essential supremum.
For a $C^1$ function $\varphi$ on a product set $X\times U$, let $\nabla_x \varphi$ be gradient of $\varphi$ w.r.t. the variable $x\in X$.
\item A group $G$ with identity element $e$ is said to act on a set $X$ when there is a map $\psi:G\times X \to X$ such that the following holds for every $x\in X$:
\begin{enumerate}
    \item $\psi(e,x)=x$
    \item $\psi(g,\psi(h,x))=\psi(gh,x)$, for every $g,h\in G$.
\end{enumerate}
By a slight abuse of notations, we will write in this article $\psi(g,x)=g(x)$, for every $(g,x)\in G\times X$.
A group homomorphism between two groups $G$ and $H$ is a mapping $\tau :G\to H$ such that $\tau (g_1 g_2)=\tau (g_1)\tau (g_2)$, for every $g_1,g_2\in G$.
\item Denote the set of  non-negative Borel measures on a set $S$ by $\mathcal{M}_+(S)$.
We say that a sequence $(\mu_k)_{k\in \N}$ of measures in $\mathcal{M}_+(S)$  weak-$\star$ converges to $\mu\in \mathcal{M}_+(S)$, if for every continuous function $\varphi$ on $S$ with compact support, we have $\int_S \varphi(x) d\mu_k(x) \to \int_S \varphi(x) d\mu(x)$, when $k\to \infty$.

\item For $n\geq 1$, denote the set of real polynomials with indeterminates $x_1,\dots,x_n$ by $\R[x_1,\dots,x_n]$.
For $\alpha=(\alpha_1,\dots,\alpha_n)\in \N^n$, define $x^\alpha=x_1^{\alpha_1}\dots x_n^{\alpha_n}$.
\item For a subset $S$ of a vector space $V$, denote the convex hull of $S$ by $\text{Conv}(S)$, i.e. the smallest convex set included in $V$ containing $S$.
\end{itemize}

\section{Definitions and basic facts}\label{FRAME}
 Consider a polynomial map $f$ in the variables $(x,u)\in X\times U$ with values in $\R^n$, where $X$ is a compact semi-algebraic subset of $\R^n$ and $U$ is a compact semi-algebraic subset of $\R^m$ with $n,m\geq 1$, and consider the controlled ordinary differential equation
\begin{equation}\label{general}
\dot{x}(t)=f(x(t),u(t)),
\end{equation}
where $t\mapsto x(t)$ takes values in $X$, and  the control $t\mapsto u(t)$ takes values in $U$.

Let \[X=\{x\in \R^n \mid \forall j\in \{1,\dots,p\}, \  v_j(x)\geq 0\},\] \[K=\{x\in \R^n \mid \forall j\in \{1,\dots,q\}, \  \theta_j(x)\geq 0\},\]  where $(v_j)_{j\in \{1,\dots,p\}}$, $(\theta_j)_{j\in \{1,\dots,q\}}$ with $p,q\geq 1$ are families of polynomials w.r.t. the variable $x$, such that $K\subseteq X$, and 
\[U=\{u\in \R^m \mid \forall j\in \{1,\dots,l\}, \  w_j(u)\geq 0\},\] where $(w_j)_{j\in \{1,\dots,l\}}$ with $l\geq 1$ is a family of polynomials w.r.t. the variable $u$.

Following the definition given in~\cite{OHSAWA_SYMMETRIES}, we define a symmetry for control system~\eqref{general} as follows.
\begin{definition}\label{DEF_SYM}
For a subgroup $G$ of $\GL_n(\R)$ acting on $X$, and a subgroup $H$ of $\GL_m(\R)$ acting on $U$, we say that Equation~\eqref{general} is $G$-\emph{invariant} if there exists a group homomorphism $\tau:G\to H$
such that for every $g\in G$, $f(g(x),\tau(g)(u))=g(f(x,u))$ for every $(x,u)\in X\times U$. 
\end{definition}
A simple example 
of such invariance is the case of a controlled harmonic oscillator corresponding to $f(x_1,x_2,u)=\begin{pmatrix}
x_2\\
-x_1+u\\
\end{pmatrix}$. We have that Equation~\eqref{general} is $G$-invariant with $G=\{\text{Id}_{\R^2},-\text{Id}_{\R^2}\}$, $H = \{1,-1\}$ and $\tau(\pm \text{Id}_{\R^2})=\pm 1$.
\begin{definition}\label{group_inv_def}
A set $S$ is $G$-invariant if $g(S) \subset S$ for all $g \in G$.
\end{definition}


Assume that the sets $X,K,U,\{x_0\}$ and Equation~\ref{general} are $G$-invariant~\footnote{The requirement of $x_0$ being $G$-invariant can be relaxed to $x_0 \in X_0$ with $X_0$ being $G$-invariant.}, and consider the optimal control problem:

\begin{equation}
\tag{OCP}
\label{optimalcontrol}
\begin{aligned}
\rho:=&\text{inf}_{u\in L^{\infty}([0,T],U)}  \quad  J(x,u)=\int_0^T h(t,x(t),u(t)) dt + H(x(T)) \\
 &\textrm{s.t.} \   (x(t),u(t))  \ \textrm{satisfies~equation~\eqref{general}}, \
x(0)=x_0, \ x(T)\in K, 
\end{aligned}
\end{equation}
where $h : [0,T]\times X \times U \to \R$ is a polynomial function  such that $h(t,g(x),\tau(g)(u))=h(t,x,u)$, for every $(t,x,u)\in [0,T]\times X \times U$, and $H : X \to \R$ is a polynomial function such that $H(g(x))=H(x)$, for every $g\in G$, $x\in X$.


\subsection{Embedding of the optimal control problem into a linear program on measures}

We relax Problem~\eqref{optimalcontrol} into a linear program on measures, following the approach pioneered in~\cite{lewis1980relaxation,rubio1975generalized,rubio1976extremal,vinter1978equivalence} and used computationally in conjunction with semi-definite programming (SDP) hierarchies in~\cite{OCP08}.

\subsubsection{Liouville Equation}

Given a test function $\phi \in C^1([0,T]\times \mathbb{R}^n)$ and a trajectory $x(\cdot)$ of (\ref{general}) generated by a control input $u(\cdot)$ starting from the initial condition $x_0$, we have
\begin{equation}\label{eq:tmpliouv}
\phi(T,x(T)) - \phi(0,x_0) = \int_0^T \dot{\phi}(t,x(t)) \,dt =  \int_0^T \frac{\partial \phi}{\partial t}(t,x(t)) + \left\langle \nabla_x\phi(t,x(t)),f(x(t),u(t))\right\rangle\,dt.
\end{equation}
To each trajectory-control pair, we can associate a measure $\mu$ associated to the trajectory-control pair $(x(t),u(t))$ is defined by
\[
\mu(A\times B \times C) = \int_0^T I_{A\times B\times C}(t,x(t),u(t))\,dt,
\]
for all Borel sets $A\subset [0,T]$, $B \subset \mathbb{R}^n$, $C\subset U$,
and a terminal measure defined by
\[
\mu_T = \delta_{x(T)}.
\]

\begin{definition}\label{OCCUP_MEAS}
We say that a pair of measures $(\nu,\nu_T)\in \mathscr{M}_+([0,T] \times X \times U)\times \mathscr{M}_+(K)$ is a \emph{pair of occupation measures} associated with a trajectory of Equation~\eqref{general} if $d\nu(t,x,u)=dt \delta_{x(t)}(dx)\delta_{u(t)}(du)$ and $d\nu_T(x)=\delta_{x(T)}(dx)$, for some pair $(x(t),u(t))$ satisfying Equation~\eqref{general}, where $x(\cdot):[0,T]\to X$ is a Lipschitz function and $u(\cdot)\in L^\infty([0,T],U)$.
\end{definition}

For a pair $(\mu,\mu_T)$ of occupation measures, the definition of $\mu$ implies that
\[
\int_0^T h(t,x(t),u(t))\,dt = \int_{[0,T]\times \mathbb{R}^n\times U} h(t,x,u)\,d\mu(t,x,u) ,
\]
for all $h \in L_1(\mu)$. The measure $\mu$ therefore uniquely encodes the trajectory-control pair $(x(t),u(t))$ and allows one to replace integration along the trajectory-control pair by spatial integration with respect to $\mu$. Using this and~(\ref{eq:tmpliouv}), we obtain the relation
\begin{equation}\label{Liouville}
\int \phi(T,x)\,d\mu_T(x) -\phi(0,x_0) = \int_{[0,T]\times \mathbb{R}^n\times U} \frac{\partial \phi}{\partial t} + \left\langle \nabla_x\phi(t,x), f(x,u)\right\rangle\,d\mu(t,x,u).
\end{equation}
This equation is referred to as the \emph{Liouville equation}. Crucially, the equation is affine in the measures $(\mu,\mu_T)$.

\subsubsection{Relaxed OCP}
For a set $S$ in a Euclidean space, denote the set of Borel measures defined on $S$ by $\mathscr{M}_+(S)$.
If the infimum of \eqref{optimalcontrol} is attained, denote the set of optimal pairs by $\mathcal{T}=\{(x(t),u(t))\in X\times U \ \text{optimal in} \ \eqref{optimalcontrol}\}$.
With these definitions, we are ready to write an infinite-dimensional linear programming (LP) relaxation of Problem~(\ref{optimalcontrol}). The relaxation reads
\begin{equation} \label{opt:relaxed}\tag{$\mathcal{O}$}
\begin{aligned}
\rho^\star = \inf\limits_{\mu,\mu_T} \quad & \int h\,d\mu+ \int H\,d\mu_T\\
\textrm{s.t.} \quad &   (\mu,\mu_T) \text{\;\,satisfy (\ref{Liouville})} \;\forall\,\phi\in C^1([0,T]\times X)\\
  &(\mu,\mu_T) \in \mathscr{M}_+([0,T]\times X\times U) \times \mathscr{M}_+(K) .   \\
\end{aligned}
\end{equation}

We observe that the objective function of this optimization problem is linear, the first constraint is affine and the last constraint is an inclusion into a convex cone. Problem~\eqref{opt:relaxed} is therefore an infinite-dimensional linear programming problem with the decision variables $(\mu,\mu_T)$. Note that the constraint sets on the trajectories and controls are imposed through the conic inclusion (the last constraint) of the optimization problem.

\begin{definition} Let us denote by $\mathcal{M}$ the convex set of optimal solutions $(\mu,\mu_T)\in \mathscr{M}_+([0,T] \times X \times U)\times \mathscr{M}_+(K)$ of Problem~\eqref{opt:relaxed}.
\end{definition}

From here, we will make the following assumption.
\begin{assumption}\label{FEASIBILITY}Assume that:
\begin{itemize}
\item Problem~\eqref{optimalcontrol} is feasible;
\item $X$, $K$ and $U$ are compact and $T < \infty$, which implies that $\mathcal{M}$ is non-empty. 
\end{itemize}
\end{assumption}

A simple observation shows that $\rho^\star \le \rho$. Whenever $\rho^\star < \rho$, we say that there is a \emph{relaxation gap} between the original Problem~\eqref{optimalcontrol} and the relaxed Problem~\eqref{opt:relaxed}. A fundamental question is to understand under what assumptions there is no relaxation gap. A classical set of assumptions from~\cite{vinter1993} is:

\begin{assumption}\label{ass:nogap}
The following conditions hold:
\begin{itemize}
    \item $\forall (t,x)\in [0,T]\times X$, \ \text{the function} \ $v\mapsto \inf_{u\in U} \{h(t,x,u) \mid v=f(t,x,u)\}$ \ \text{is convex};
    \item  $\forall x\in X$, the set $f(x,U)$ is convex;
    \item $h$ is lower semicontinuous and $f$ is Lipschitz;
\end{itemize}
\end{assumption}

We introduce the following definition of occupation measures, which correspond to the solutions of the measure Problem \eqref{opt:relaxed} which provide solutions of the original Problem~\eqref{optimalcontrol}.

\begin{remark}Note that, under Assumption~\ref{ass:nogap}, the occupation measures can be defined as measures associated with classical solutions of Equation~\eqref{general}, instead of considering solutions in the sense of Young measures (for more information, see for instance~\cite[Proof of Theorem 2.3]{vinter1993}).
\end{remark}

\subsection{Moment sequences, relaxation scheme, invariance}
For positive integers $k,N$, define $\N_k^N=\{\omega\in \N^N \mid \sum_{j=1}^N \omega_{j} \leq k\}$.
For positive integers $n,m$, let us define a multi-index $\alpha$ as a vector of $n$ nonnegative integers $\alpha=(\alpha_1,\dots,\alpha_n)$, and a multi-index $\beta$ as a vector of $m$ nonnegative integers $\beta=(\beta_1,\dots,\beta_m)$, and $s\geq 1$.
For the sake of notation, a polynomial $q\in \R[t,x,u]$ (respectively, $q\in \R[x]$)  is written as \[q(t,x,u)=\sum_{\gamma = (s, \alpha,\beta)} q_{\gamma} t^s x_{1}^{\alpha_1}\dots x_{n}^{\alpha_n}u_1^{\beta_1}\dots u_m^{\beta_m},\] (respectively, $q(x)=\sum_{\alpha} q_{\alpha} x_{1}^{\alpha_1}\dots x_{n}^{\alpha_n}$, where $\alpha\in \N^{n}$).

\begin{definition}\label{MOM_MAT}
\begin{itemize}
\item Given a real sequence $z=(z_\gamma)_{\gamma\in \N^{1+n+m}}$ (respectively, $y=(y_\alpha)_{\alpha\in \N^{n}}$), define the so-called Riesz linear functional \[L_z(q):=\sum_\gamma q_\gamma z_\gamma,\] for every $q\in \R[t,x,u]$ (respectively, $L_y(q):=\sum_\alpha q_\alpha y_\alpha$, for every $q\in \R[x]$).
\item For each $k \in \N$, define the \emph{moment matrix} $M_k(z)$ by \[(M_k(z))_{\beta,\gamma}:=z_{\beta+\gamma},\] for every $\beta,\gamma \in \N_{k}^{1+n+m}$ (respectively, $(M_k(y))_{\beta,\gamma}:=y_{\beta+\gamma}$, for every $\beta,\gamma \in \N_{k}^{n}$). 
\item For each $k \in \N$ and $q\in \R[t,x,u]$ (respectively, $q\in \R[x]$) , define the \emph{localizing matrix} as \[\left(M_k(q z)\right)_{\beta,\gamma}:=\sum_{\delta\in \N^{1+n+m}}  q_\delta z_{\delta+\beta+\gamma},\] for every $\beta,\gamma\in \N_{k}^{1+n+m}$ (respectively, $\left(M_k(q y)\right)_{\beta,\gamma}:=\sum_{\delta\in \N^{n}}  q_\delta y_{\delta}$, for every $\beta,\gamma\in \N_{k}^{n}$).
\item For the sake of notation, define $z(t):=(z_{j,0,0})_{j\in \N}$, $z(x):=(z_{0,\alpha,0})_{\alpha\in \N^n}$, and $z(u):=(z_{0,0,\beta})_{\beta\in \N^m}$.
\end{itemize}
\end{definition}

\begin{definition}\label{G_INV_SEQ}
A pseudo-moment sequence $z=(z_\alpha)_{\alpha\in \mathcal{B}}$ (respectively, $y=(y_\alpha)_{\alpha\in \tilde{\mathcal{B}}}$)  is defined as the image of a monomial basis $\mathcal{B}$ (respectively, $\tilde{\mathcal{B}}$) of $\R[t,x,u]$ (respectively, $\R[x]$) by a linear functional $L:\R[t,x,u]\to \R$ (respectively, $L:\R[x]\to \R$).
\end{definition}

\subsubsection{Dense relaxation~\label{LET_DENSE}}
\begin{assumption}\label{ass:final_time}
Assume that the final time $T>0$ is fixed and without loss of generality, let us normalize it to $T=1$.
\end{assumption}

\begin{remark}
Assumption~\ref{ass:final_time} can be removed by considering the final time $T$ as a supplementary variable, as it was done in~\cite{OCP08}. All the results of the present paper remain valid in this case provided that a finite bound is imposed on the final time. 
\end{remark}

Consider the polynomials $(v_j)_j$, $(\theta_j)_j$, $(w_j)_j$ as defined in Section~\ref{FRAME}, and define $d(X,K,U)=\max_{j,l,k} \left(\text{deg}(v_j),\text{deg}(\theta_l),\text{deg}(w_k)\right)$.
Following the steps of~\cite{OCP08}, we propose the following dense relaxation hierarchy, 
indexed by $k\geq k_0$, where $k_0 := \lceil \text{max}\left(\text{deg}(f),\text{deg}(h),\text{deg}(H),d(X,K,U)\right) /2 \rceil$, for two pseudo moment sequences $z=(z_\alpha)_{\alpha\in \mathcal{B}}$ and  $y=(y_\alpha)_{\alpha\in \tilde{\mathcal{B}}}$, where $\mathcal{B}$ (respectively, $\tilde{\mathcal{B}}$) is a monomial basis of $\R[t,x,u]$ (respectively, $\R[x]$).
It is denoted by~\eqref{sdp_dense}, and defined as follows:

\begin{equation} \tag{$Q_k$}\label{sdp_dense}
\begin{aligned}
\rho_k := \inf\limits_{y,z} \quad & L_z(h)+L_y(H)\\
\textrm{s.t.} \quad &   M_k(y),M_k(z)\succeq 0\\
   & M_{k-\lceil\text{deg}(v_j)/2\rceil}(v_j z(x))\succeq 0  \\
   &M_{k-\lceil\text{deg}(w_j)/2\rceil}(w_j z(u))\succeq 0 \\
  &M_{k-\lceil\text{deg}(\theta_j)/2\rceil}(\theta_j y)\succeq 0 \\
  &M_{k-1}(t(1-t) z(t))\succeq 0 \\
  & L_y(\phi)-L_z\left(\partial \phi/ \partial t+\langle \nabla_x \phi, f \rangle \right)=\phi(0,x_0), \\ 
  & \forall \phi=(t^sx^\alpha) \in \R[t,x] \ \text{s.t.} \ s+|\alpha|\leq 2k+1-\text{deg}(f),\\
\end{aligned}
\end{equation}
where $z(t)$, $z(x)$ and $z(u)$ are defined as in Definition~\ref{MOM_MAT}.
Note that last equality is a linear equality constraint involving the pseudo-moment variables $y$ and $z$ holds if and only if Equation~\eqref{Liouville} is satisfied for every $\phi\in \R[t,x] \ \text{s.t.} \ \text{deg}(\phi)\leq 2k+1-\text{deg}(f)$. 

\subsubsection{Putinar's Positivstellensatz}
The key argument in the convergence of the hierarchy is Putinar's Positivstellensatz, which is stated in its simplest form as follows.
Let $\Omega\subset \R^N$ be a compact semi-algebraic set defined as \[\Omega=\{x\in \R^N \ \mid \ g_j(x)\geq 0, \ j\in \{1,\dots,m\}\},\] for some family of polynomials $(g_j)_{j\in \{1,\dots,m\}}$ in $\R[x]$.

\begin{assumption}\label{ass:condition}
There exists a polynomial $q\in \R[x]$ such that $q(x)\geq 0$ is compact and $q(x)=q_0(x)+\sum_{j=1}^m g_j(x) q_j(x)$, where the polynomials $(q_j)_{j\in \{1,\dots,m\}}$ are sums of squares (SOS) polynomials in $\R[x]$.
\end{assumption}
\begin{thm}[Putinar Positivstellensatz~\cite{PUTINAR}]\label{PUTINAR_THM}
Assume that Assumption~\ref{ass:condition} holds.
Then we have the following:
\begin{itemize}
    \item If $f\in \R[x]$ and $f>0$ on $\Omega$, then $f=f_0+\sum_{j=1}^m f_j g_j$, for some family $(f_j)_{j\in \{1,\dots,m\}}$ of sums of squares (SOS) polynomials in $\R[x]$.
    \item Let $y=(y_{\alpha})_{\alpha\in \N^N}$ be a sequence of real numbers. If for every $j\in \{1,\dots,m\}$ and $k\in \N$, $M_k(y)\succeq 0$ and $M_k(g_j y)\succeq 0$, then $y$ has a representing measure with support contained in $\Omega$, i.e., there exists $\mu\in \mathscr{M}_+(\R^N)$ with $\text{supp}(\mu)\subseteq \Omega$ such that $y_\alpha=\int_{\R^N} x^\alpha d\mu(x)$, for every $\alpha\in \N^N$.
\end{itemize}

\end{thm}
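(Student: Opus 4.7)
The plan is to prove both parts by leveraging the Archimedean property of the quadratic module $Q := \{\sigma_0 + \sum_{j=1}^m \sigma_j g_j : \sigma_j \text{ SOS}\}$ generated by the $g_j$. The Archimedean hypothesis (Assumption 4.6) implies that for every $p \in \R[x]$ there exists $N > 0$ with $N \pm p \in Q$; in particular $Q$ contains a neighborhood of every polynomial in the seminorm $\|p\|_Q := \inf\{\lambda > 0 : \lambda \pm p \in Q\}$, and each element of $Q$ is bounded on $\Omega$.

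For the first (representation) claim I would argue by contradiction. Suppose $f > 0$ on $\Omega$ but $f \notin Q$. Since $Q$ is a convex cone in the vector space $\R[x]$, a Hahn--Banach separation in the normed space $(\R[x], \|\cdot\|_Q)$ produces a nonzero linear functional $L : \R[x] \to \R$ with $L(f) \leq 0$ and $L \geq 0$ on $Q$. Normalize so that $L(1) = 1$, which is possible because $1 \in Q$ and $\|1\|_Q > 0$. The Archimedean property makes $L$ continuous with respect to $\|\cdot\|_Q$, and the embedding $\R[x] \hookrightarrow C(\Omega)$ induced by evaluation extends $L$ to a positive linear functional on the closure, which by the Riesz--Markov--Kakutani theorem is represented by a probability measure $\mu$ supported on $\Omega$. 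Then $\int_\Omega f\,d\mu = L(f) \leq 0$ contradicts $f > 0$ on the compact set $\Omega$.

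For the second (moment) claim, let $L_y$ be the Riesz functional associated with the sequence $y$. The PSD conditions $M_k(y) \succeq 0$ and $M_k(g_j y) \succeq 0$ for all $k, j$ are exactly equivalent to $L_y(\sigma_0) \geq 0$ and $L_y(\sigma_j g_j) \geq 0$ for every SOS $\sigma_j$, hence $L_y \geq 0$ on $Q$. Given $p \in \R[x]$ with $p \geq 0$ on $\Omega$, the first part applied to $p + \varepsilon > 0$ (for arbitrary $\varepsilon > 0$) yields $p + \varepsilon \in Q$, whence $L_y(p) + \varepsilon L_y(1) \geq 0$; letting $\varepsilon \to 0$ gives $L_y(p) \geq 0$. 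Thus $L_y$ is nonnegative on every polynomial nonnegative on $\Omega$, and Haviland's theorem produces a Borel measure on $\Omega$ representing $y$.

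The hard part is the Hahn--Banach step: one must verify that the separating functional, constructed abstractly, genuinely extends to a Radon measure on $\Omega$. This hinges on the Archimedean hypothesis, which supplies simultaneously (i) the seminorm $\|\cdot\|_Q$ making $L$ continuous, (ii) the topological embedding into $C(\Omega)$ via the compactness of $\Omega$, and (iii) the positivity needed to apply Riesz--Markov. A secondary technicality is the strict-positivity perturbation $p \mapsto p + \varepsilon$ used to transfer the SOS decomposition theorem into a nonnegativity statement for the moment functional; this is clean once $\Omega$ is known to be compact, which is exactly what Assumption 4.6 guarantees.
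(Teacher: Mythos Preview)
The paper does not prove this theorem; it is stated with a citation to Putinar's original work and then used as a black box in the convergence arguments (Proposition~\ref{CONV_MOMENTUM} and the proof of Proposition~\ref{CATCH}). There is therefore no in-paper proof to compare your attempt against.

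On the merits of your sketch: the derivation of the second claim from the first via the $p\mapsto p+\varepsilon$ trick and Haviland's theorem is correct and standard. For the first claim, however, there is a genuine gap precisely at the step you yourself flag as ``hard.'' You obtain $|L(p)|\le \|p\|_Q$ from the Archimedean property, and you observe the evaluation embedding $\R[x]\hookrightarrow C(\Omega)$. But the inequality the Archimedean property yields is $\|p\|_{C(\Omega)}\le \|p\|_Q$, which is the \emph{wrong direction} for pushing a $\|\cdot\|_Q$-bounded functional down to $C(\Omega)$: to factor $L$ through the evaluation map you would need $\|p\|_Q\le C\|p\|_{C(\Omega)}$, and establishing that is tantamount to the theorem itself (it asserts $\lambda\pm p\in Q$ whenever $\lambda>\sup_\Omega|p|$). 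Equivalently, nothing in your argument rules out $L(p)\ne 0$ for some $p$ vanishing on $\Omega$. The non-circular route---due to Putinar, with later simplifications by Jacobi, Prestel, and Schweighofer---is either to show that \emph{extreme} states on the Archimedean quadratic module are multiplicative, hence point evaluations at points of $\Omega$, and then invoke Krein--Milman, or to run a direct algebraic induction exploiting the Archimedean bound. Your outline names the right hypothesis and the right destination but does not supply the bridge between them.
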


\subsubsection{Convergence result}
Under Assumption~\ref{FEASIBILITY}, \ref{ass:nogap} and~\ref{ass:condition}, the results of~\cite[Theorem 3.6]{OCP08} and~\cite[Theorem 5]{TACCHI22} allow us to prove the following.
\begin{prop}\label{CONV_MOMENTUM}
Assume that Assumption~\ref{ass:nogap} holds for $\Omega=[0,T]\times X \times U$ and $\Omega=K$, and that Assumption~\ref{ass:condition} holds. Then we have the following:
\begin{itemize}
    \item the sequence $(\rho_k)_k$ is nondecreasing and $\rho_k \to \rho^\star$ when $k\to +\infty$
    \item if Problem~\eqref{opt:relaxed} has a unique solution $(\mu,\mu_T)$, then any sequences $(z_{s \alpha \beta}^k)_{k\geq k_0}$ and $(y_{\alpha}^k)_{k\geq k_0}$ optimal in~\eqref{sdp_dense} satisfy for every $s\in \N$, $\alpha\in \N^n$ and  $\beta\in\N^m$, \[\lim_{k\to\infty} z_{s\alpha\beta}^k = \int_{[0,T]\times X \times U} t^{s}x^{\alpha}u^{\beta} d\mu(t,x,u),\] and \[\lim_{k\to\infty} y_{\alpha}^k = \int_{K} x^{\alpha} d\mu_T(x).\]
\end{itemize}
\end{prop}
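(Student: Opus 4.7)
The plan is to follow the standard moment-SOS convergence argument, as in \cite{OCP08,TACCHI22}, carefully verifying that the compactness and Putinar hypotheses (Assumptions~\ref{FEASIBILITY} and~\ref{ass:condition}) allow us to pass to the limit in both the objective and in the Liouville equation. The argument splits naturally into (a) a two-sided sandwich $\rho_k \le \rho^\star$ and $\liminf_k \rho_k \ge \rho^\star$, and (b) a uniqueness-plus-compactness argument for moment convergence.

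First, I would observe that $\rho_k \le \rho_{k+1} \le \rho^\star$ for all $k \ge k_0$. The inequality $\rho_k \le \rho_{k+1}$ holds because any truncation to degree $2k$ of a $(2k+2)$-truncated pseudo-moment sequence feasible for $Q_{k+1}$ is feasible for $Q_k$ with the same objective value, as the cost $L_z(h)+L_y(H)$ and the Liouville constraints at level $k$ involve only moments up to the degree appearing in $Q_k$. The inequality $\rho_k \le \rho^\star$ holds since, given any $(\mu,\mu_T)$ feasible for~\eqref{opt:relaxed}, the sequences $z_\gamma = \int t^s x^\alpha u^\beta \, d\mu$ and $y_\alpha = \int x^\alpha\, d\mu_T$ satisfy all PSD, localizing, and Liouville constraints of $Q_k$ and reach the same objective.

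Next, I would prove $\liminf_k \rho_k \ge \rho^\star$. Let $(z^k,y^k)$ be optimal for $Q_k$. The $t(1-t)$ localizing constraint forces $[0,T]$-moments to be bounded, and compactness of $X,U,K$ combined with Assumption~\ref{ass:condition} (via the standard argument: $M_k(y)\succeq 0$ together with the localizing inequalities for a sufficiently high power of the Archimedean polynomial $q$ bounds each fixed moment $y^k_\alpha$ uniformly in $k$) yields a uniform bound on each entry. By a diagonal extraction we obtain a subsequence $(z^{k_j},y^{k_j})$ converging entrywise to limit sequences $(z^\infty,y^\infty)$. Passing to the limit preserves the PSD and localizing matrix inequalities at every fixed order, so by the second part of Putinar's theorem (Theorem~\ref{PUTINAR_THM}) applied on the sets $[0,T]\times X\times U$ and $K$, there exist $\mu^\infty \in \mathscr{M}_+([0,T]\times X\times U)$ and $\mu_T^\infty \in \mathscr{M}_+(K)$ representing $z^\infty$ and $y^\infty$. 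The polynomial Liouville constraints pass to the limit as well; since polynomials in $(t,x)$ are dense in $C^1([0,T]\times X)$ with respect to the $C^1$-norm (Stone–Weierstrass together with derivatives), $(\mu^\infty,\mu_T^\infty)$ satisfies the full Liouville equation~\eqref{Liouville}, hence is feasible for~\eqref{opt:relaxed}. Continuity of the objective in the moments of $h$ and $H$ gives $\lim_j \rho_{k_j} = \int h\,d\mu^\infty + \int H\,d\mu_T^\infty \ge \rho^\star$. Combined with monotonicity this yields $\rho_k \nearrow \rho^\star$.

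For the second item, assume the solution $(\mu,\mu_T)$ of~\eqref{opt:relaxed} is unique. The previous argument shows that any subsequential limit $(\mu^\infty,\mu_T^\infty)$ of $(z^k,y^k)$ is optimal for~\eqref{opt:relaxed}, hence equals $(\mu,\mu_T)$ by uniqueness, so every subsequential limit of $(z^k_{s\alpha\beta},y^k_\alpha)$ equals the corresponding moment of $(\mu,\mu_T)$. Since each moment sequence is bounded, a standard subsequence-of-subsequence argument upgrades this to convergence of the full sequence to the desired moments. The main delicate point in the whole argument is the uniform boundedness of the pseudo-moments across levels $k$, for which Assumption~\ref{ass:condition} (Archimedeanity) is crucial — without it, higher-order moments could blow up and the entrywise extraction would fail; all other steps are rather mechanical adaptations of~\cite[Thm.~3.6]{OCP08} and~\cite[Thm.~5]{TACCHI22} to the present notation.
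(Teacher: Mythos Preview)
Your proposal is correct and follows essentially the same approach as the paper, which does not give an explicit proof but simply invokes \cite[Theorem~3.6]{OCP08} and \cite[Theorem~5]{TACCHI22}; your write-up faithfully reconstructs the standard argument behind those references (monotonicity, Archimedean moment bounds, diagonal extraction, Putinar representation, and passage to the limit in the Liouville constraints). The later proof of Proposition~\ref{CATCH} in the paper makes this same machinery explicit, confirming that your route is the intended one.
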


 \subsubsection{Structure of the set of optimal measures}
In the case where the set of optimal trajectories of Problem~(\ref{optimalcontrol}) is not reduced to a single trajectory, the set of optimal measures for Problem~(\ref{opt:relaxed}) is not reduced to occupation measures associated with trajectories in the sense of Definition~\ref{OCCUP_MEAS}. However, as shown in~\cite{vinter1993}, there are strong structural results for the solutions of Equation~\eqref{Liouville}, which will be crucial for the methods exposed in the present paper.
Let us first recall a basic notion.
\begin{definition} 
An extreme point of a convex set $A$ is a point $x\in A$ with the property that if $x=\theta y+(1-\theta)z$ with $y,z\in A$ and $\theta \in (0,1)$, then $x = y =z$.
\end{definition}

By~\cite[Theorem 1.3]{vinter1993}, we have the following  results for the solutions of Equation~\eqref{Liouville}.
\begin{proposition}\label{EXTREME}
The set of solutions of Equation~\eqref{Liouville}  satisfying $(\mu,\mu_T)\in \mathscr{M}_+([0,T] \times X \times U)\times \mathscr{M}_+(K)$ 
is a convex compact set in the weak-$\star$ topology. Its set of extreme points is non empty and can be identified as occupation measures, as in Definition~\ref{OCCUP_MEAS}. 
\end{proposition}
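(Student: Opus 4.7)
The plan is to verify each of the three structural claims (convexity, weak-$\star$ compactness, identification of extreme points) separately. Convexity is immediate: the Liouville equation \eqref{Liouville} is affine in $(\mu,\mu_T)$, so the solution set is the intersection of the convex cone $\mathscr{M}_+([0,T]\times X\times U)\times \mathscr{M}_+(K)$ with an affine subspace, hence convex. For compactness, I would first obtain a uniform bound on total variation by testing \eqref{Liouville} against constant test functions: taking $\phi\equiv 1$ gives $\mu_T(K)=1$, and taking $\phi(t,x)=t$ yields $\mu([0,T]\times X\times U)=T$. Since $X,K,U$ are compact, Banach--Alaoglu then places both measures in a weak-$\star$ compact subset of $\mathscr{M}_+$. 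Closedness of the solution set is inherited from the fact that each instance of \eqref{Liouville} is a continuous linear equation against a test function in $C^1$; taking a dense countable set of such test functions gives weak-$\star$ closedness, hence compactness.

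Non-emptiness of the extreme point set then follows from the Krein--Milman theorem applied to a non-empty convex weak-$\star$ compact set in the locally convex space of signed measures. Non-emptiness of the solution set itself is guaranteed by Assumption~\ref{FEASIBILITY}, since occupation measures associated to admissible trajectories are themselves solutions of \eqref{Liouville}.

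The main work is the identification of extreme points with occupation measures. Here I would proceed by disintegration. First, test \eqref{Liouville} against functions depending only on $t$ to show that the $t$-marginal of $\mu$ is Lebesgue measure on $[0,T]$; so $\mu$ disintegrates as $d\mu(t,x,u)=dt\otimes \sigma_t(dx,du)$ with $\sigma_t$ a probability measure on $X\times U$ for a.e.\ $t$. Further disintegrating $\sigma_t = \nu_t(dx)\otimes \eta_{t,x}(du)$ and inserting into \eqref{Liouville} shows that $\nu_t$ satisfies, in the sense of distributions, the reduced equation
\[
\tfrac{d}{dt}\int \phi(x)\,d\nu_t(x) = \int \langle \nabla_x\phi(x),\bar f(t,x)\rangle\,d\nu_t(x),\qquad \bar f(t,x):=\int f(x,u)\,d\eta_{t,x}(u),
\]
with $\nu_0=\delta_{x_0}$. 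The strategy is to argue by contraposition: if $\nu_t$ were not a Dirac for a positive-measure set of times, one could split the set of initial conditions (or, more precisely, the superposition of solutions of the above continuity equation by the Ambrosio--DiPerna--Lions-type superposition principle adapted to the Vinter setting) and write $\mu = \theta\mu^{(1)}+(1-\theta)\mu^{(2)}$ with $\mu^{(1)}\neq \mu^{(2)}$ both admissible, contradicting extremality. Conversely, any occupation measure of an admissible trajectory is automatically extreme because the underlying trajectory-control pair is rigidly reconstructed from the measure by evaluating continuous functions.

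The main obstacle is precisely this decomposition step: one needs to legitimize the superposition of solutions in the presence of the control $u$ (which enters nonlinearly into $f$) and the possibility of having Young-measure-valued relaxed controls $\eta_{t,x}$. Under Assumption~\ref{ass:nogap} the convexity of $f(x,U)$ and the convexity of the effective Lagrangian allow us to replace $\eta_{t,x}$ by a genuine selection $u(t)\in U$ via a measurable selection theorem applied to the Filippov-type multifunction $\{u\in U: f(x,u)=\bar f(t,x),\ h(t,x,u)\le \bar h(t,x)\}$, thereby producing the classical Lipschitz trajectory $x(\cdot)$ and measurable control $u(\cdot)$ required by Definition~\ref{OCCUP_MEAS}. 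Combining this selection with the dichotomy above gives the claimed identification of extreme points with occupation measures.
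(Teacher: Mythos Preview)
The paper does not actually prove this proposition; it is stated as an immediate consequence of \cite[Theorem~1.3]{vinter1993}, with no further argument. Your sketch therefore supplies substantially more than the paper does. Your treatment of convexity, the mass bounds obtained from the test functions $\phi\equiv 1$ and $\phi(t,x)=t$, weak-$\star$ compactness via Banach--Alaoglu, and the appeal to Krein--Milman is correct and standard.

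On the identification of extreme points, two comments. First, Vinter's theorem in \cite{vinter1993} identifies the extreme points as occupation measures of \emph{relaxed} trajectories (Young-measure-valued controls), whereas Definition~\ref{OCCUP_MEAS} in the paper demands a classical control $u(\cdot)\in L^\infty([0,T],U)$. Your instinct to invoke Assumption~\ref{ass:nogap} together with a Filippov-type measurable selection to pass from the relaxed to the classical description is exactly right; the paper relies on the same passage tacitly (see the remark immediately following Definition~\ref{OCCUP_MEAS}), so the proposition as written is slightly imprecise without that hypothesis. Second, the superposition step you correctly flag as ``the main obstacle'' is the heart of the matter. Your route through an Ambrosio--DiPerna--Lions-type principle is a modern alternative to Vinter's 1993 argument (which proceeds via compactness of relaxed arcs and a disintegration over path space) and is viable, but you should make explicit that the split $\mu=\theta\mu^{(1)}+(1-\theta)\mu^{(2)}$ yields pieces that both satisfy \eqref{Liouville} with the \emph{same} initial Dirac $\delta_{x_0}$; this follows because $\nu_0=\delta_{x_0}$ forces every characteristic in the superposition to start at $x_0$. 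Finally, the converse inclusion (every classical occupation measure is extreme) is not actually asserted by the proposition, and your justification for it is too quick in any case: the clean argument is that any nonnegative measure dominated by $dt\,\delta_{x(t)}(dx)\,\delta_{u(t)}(du)$ whose $t$-marginal is Lebesgue must coincide with it.
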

Using~\cite[Corollary 1.4]{vinter1993}, we have the following property.
\begin{corollary}
For every $(\mu,\mu_T)$ solution of Equation~\eqref{Liouville}, there exist probability measures $\nu$ on $S$ and $\nu_T$ on $S_T$ satisfying \[\int_{[0,T] \times X \times U} \phi(t,x,u) d\mu(t,x,u)= \int_{S} \left(\int_{[0,T] \times X \times U} \phi(t,x,u) d\gamma(t,x,u)\right) d\nu(\gamma),\] \[\int_{K} \varphi(x) d\mu_T(x)= \int_{S_T} \left(\int_{K} \varphi(x) d\gamma(x)\right) d\nu_T(\gamma),\] for all $(\phi,\varphi)\in C^1([0,T] \times X \times U)\times C^1(K)$.
\end{corollary}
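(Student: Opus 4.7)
The statement is a Choquet-type integral representation: it says that any solution $(\mu,\mu_T)$ of the Liouville equation is the barycenter of a probability measure supported on the extreme points of the solution set, which by Proposition~\ref{EXTREME} coincide with occupation measures. My plan is therefore to apply Choquet's theorem to the convex compact sets provided by Proposition~\ref{EXTREME}, and then unpack the barycenter identity into the stated disintegration.

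First, I would let $S$ denote the set of extreme points of the convex compact set of trajectory solutions $(\mu,\mu_T)\in \mathscr{M}_+([0,T]\times X\times U)\times\mathscr{M}_+(K)$ of \eqref{Liouville}, projected onto the first component, and let $S_T$ be the analogous set for the terminal component. By Proposition~\ref{EXTREME}, both sets consist of occupation measures in the sense of Definition~\ref{OCCUP_MEAS} and are non-empty. The set of solutions itself is convex and weak-$\star$ compact.

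Next, I would verify that Choquet's theorem applies. Since $[0,T]\times X\times U$ and $K$ are compact metric spaces (as compact semi-algebraic subsets of Euclidean space), the spaces $C([0,T]\times X\times U)$ and $C(K)$ are separable Banach spaces. The weak-$\star$ topology restricted to a bounded subset of the dual is therefore metrizable, and since the Liouville equation bounds the total mass of $\mu$ by $T$ and $\mu_T$ is a probability measure, the solution set is bounded and thus a metrizable compact convex subset of a locally convex space. Choquet's theorem then guarantees that every point $(\mu,\mu_T)$ is the barycenter of a probability measure supported on the extreme points; splitting this into the two marginal components yields probability measures $\nu$ on $S$ and $\nu_T$ on $S_T$.

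Finally, I would translate the barycenter property into the claimed integral identity. By definition of barycenter, for every continuous affine functional $F$ on the solution set, $F(\mu,\mu_T)=\int F(\gamma)\,d\nu(\gamma)+\int F(\gamma)\,d\nu_T(\gamma)$. Specializing $F$ to evaluation against test functions $\phi\in C^1([0,T]\times X\times U)$ (resp. $\varphi\in C^1(K)$), which are linear and weak-$\star$ continuous on the bounded solution set, gives exactly the two displayed equations. The main technical obstacle is to justify the metrizability and boundedness of the solution set in order to invoke Choquet cleanly; once that is in place, the rest is a direct unpacking. Alternatively, and more economically, one may simply invoke \cite[Corollary 1.4]{vinter1993}, where precisely this disintegration is established in the present setting.
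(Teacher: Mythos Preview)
Your final sentence matches the paper exactly: the paper does not prove this corollary but simply invokes \cite[Corollary 1.4]{vinter1993}. So on that level you are aligned.

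Your Choquet sketch, however, contains an error worth fixing. Choquet's theorem produces a \emph{single} probability measure $\pi$ on the set of extreme points, which here are \emph{pairs} $(\gamma,\gamma_T)$; the barycenter identity reads
\[
F(\mu,\mu_T)=\int F(\gamma,\gamma_T)\,d\pi(\gamma,\gamma_T),
\]
not the sum $\int F\,d\nu+\int F\,d\nu_T$ that you wrote. The measures $\nu$ and $\nu_T$ in the statement are then obtained as the two marginals of $\pi$, and the displayed identities follow because the functionals $(\gamma,\gamma_T)\mapsto\int\phi\,d\gamma$ and $(\gamma,\gamma_T)\mapsto\int\varphi\,d\gamma_T$ each depend on only one component. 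Once you correct this, the Choquet route is a legitimate (and instructive) alternative to the bare citation, with the metrizability and boundedness checks you outline being the right ingredients.
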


 By linearity and continuity of the cost functional of Problem~\eqref{opt:relaxed} w.r.t. $(\mu,\mu_T)\in \mathscr{M}_+([0,T] \times X \times U)\times \mathscr{M}_+(K)$, we obtain the following result.
\begin{corollary}\label{EXTR_POINT}
The set $\mathcal{M}$ is a convex compact set of $\mathscr{M}_+([0,T] \times X \times U)\times \mathscr{M}_+(K)$ in the weak-$\star$ topology. Its set of extreme points $\mathcal{E}=S\times S_T\subset \mathscr{M}_+([0,T] \times X \times U)\times \mathscr{M}_+(K)$ is non empty and can be identified as occupation measures, in the sense of Definition~\ref{OCCUP_MEAS}.
\end{corollary}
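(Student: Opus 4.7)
The plan is to deduce the corollary from Proposition~\ref{EXTREME} by observing that $\mathcal{M}$ is the intersection of the full feasible set of the Liouville equation, call it $\mathcal{L}$, with the affine level set $\{(\mu,\mu_T) : \int h\,d\mu+\int H\,d\mu_T=\rho^\star\}$, and by showing that extremality in $\mathcal{M}$ forces extremality in $\mathcal{L}$.

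First I would establish convexity and weak-$\star$ compactness of $\mathcal{M}$. Convexity of $\mathcal{M}$ is immediate: the objective is linear in $(\mu,\mu_T)$ and $\mathcal{L}$ is convex by Proposition~\ref{EXTREME}, hence the set of minimizers of a linear functional over a convex set is convex. For compactness, I would use that $X$, $K$, $U$ and $[0,T]$ are all compact and that $h$ and $H$ are polynomials (hence continuous and bounded on these compact sets), so $(\mu,\mu_T)\mapsto \int h\,d\mu+\int H\,d\mu_T$ is weak-$\star$ continuous. Therefore $\mathcal{M}$ is the intersection of the weak-$\star$ compact set $\mathcal{L}$ (Proposition~\ref{EXTREME}) with the weak-$\star$ closed preimage of $\{\rho^\star\}$, and is thus weak-$\star$ compact. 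Non-emptiness of $\mathcal{M}$ follows from the continuity of the objective on the non-empty compact $\mathcal{L}$ (Assumption~\ref{FEASIBILITY} yielding $\mathcal{L}\neq\emptyset$). The Krein-Milman theorem then guarantees that the set of extreme points of $\mathcal{M}$ is non-empty.

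Next I would prove that every extreme point of $\mathcal{M}$ is also an extreme point of $\mathcal{L}$. Take $(\mu,\mu_T)\in \mathcal{M}$ extreme in $\mathcal{M}$ and suppose it admits a decomposition $(\mu,\mu_T)=\theta(\mu_1,\mu_{1,T})+(1-\theta)(\mu_2,\mu_{2,T})$ with $\theta\in(0,1)$ and $(\mu_i,\mu_{i,T})\in\mathcal{L}$. Linearity of the cost gives
\[
\rho^\star=\theta\bigl(\textstyle\int h\,d\mu_1+\int H\,d\mu_{1,T}\bigr)+(1-\theta)\bigl(\textstyle\int h\,d\mu_2+\int H\,d\mu_{2,T}\bigr),
\]
and since each bracketed term is at least $\rho^\star$ (as $(\mu_i,\mu_{i,T})\in\mathcal{L}$ is feasible for~\eqref{opt:relaxed}), both brackets must equal $\rho^\star$. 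Hence $(\mu_i,\mu_{i,T})\in\mathcal{M}$, and extremality of $(\mu,\mu_T)$ in $\mathcal{M}$ forces $(\mu_1,\mu_{1,T})=(\mu_2,\mu_{2,T})=(\mu,\mu_T)$. Applying Proposition~\ref{EXTREME} to $\mathcal{L}$, any extreme point of $\mathcal{L}$ is an occupation measure in the sense of Definition~\ref{OCCUP_MEAS}, so the same holds for extreme points of $\mathcal{M}$, yielding the identification $\mathcal{E}=S\times S_T$.

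The only mildly subtle point is the weak-$\star$ continuity of the cost functional, which relies on compactness of $[0,T]\times X\times U$ and $K$ together with continuity of $h$ and $H$; this is exactly what Assumption~\ref{FEASIBILITY} provides, so no genuine obstacle arises. The rest is a routine combination of linearity of the objective, the Krein-Milman theorem, and the structural result of Proposition~\ref{EXTREME}.
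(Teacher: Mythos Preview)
Your proposal is correct and follows essentially the same route as the paper: the paper simply states that the corollary follows ``by linearity and continuity of the cost functional'' together with Proposition~\ref{EXTREME}, and your argument is precisely a careful unpacking of this, showing that $\mathcal{M}$ is a weak-$\star$ closed face of the feasible set $\mathcal{L}$ and hence inherits compactness and the identification of extreme points as occupation measures.
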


\section{Techniques and properties for symmetry reduction~\label{tech}}

The idea behind the algorithms which will be exposed in further Sections~\ref{RECOV} and \ref{SELECTO} is to reduce the problem via a symmetry decomposition of the solution set $\mathcal{M}$, by the resolution of appropriate $G$-invariant LP problems on the set of $G$-invariant non-negative measures. It requires an understanding of the group action on the space of optimal trajectories $\mathcal{T}$, on polynomials, as well as on the space of non-negative measures, that will be described in this section.
 We will first state two crucial results, namely Lemma~\ref{Liouville_invariant} and Proposition~\ref{INVARIANCE} about the structure of the set of solutions to~\eqref{opt:relaxed}. Then we will introduce symmetry-reduced moments matrices, which will be the key tool enabling us to solve approximately the symmetry-reduced measure LPs via SDPs.

In what follows, we make the following assumption.
\begin{assumption}
    The group $G$ is finite, and Problem~\eqref{optimalcontrol} is $G$-invariant. 
\end{assumption}

\subsection{Group action on measures and properties of solutions of the linear problem on measures}

\begin{definition}\label{MEAS_DEFI}
\begin{itemize}
\item For a measure $\mu\in \mathscr{M}_+([0,T] \times X \times U)$, define the pushforward measure $g_{\#}\mu$ of $\mu$, defined for every $g\in G$, for every borelian subset $A$ of $[0,T]$, borelian subset $B$ of $X$, and borelian subset $C$ of $U$, by $g_{\#}\mu(A\times B\times C)=\mu(A\times g^{-1}(B)\times \tau(g)^{-1}(C))$.
Define similarly, for a borelian subset $D$ of $K$, $g_{\#}\mu_T(D)=\mu_T(g^{-1}(D))$ for $\mu_T\in \mathscr{M}_+(K)$.
\item Define for every $\mu\in \mathscr{M}_+([0,T] \times X \times U)$, the Reynold operator $R$ by $R(\mu)=\frac{1}{|G|}\sum_{g\in G} g_{\#}\mu$.
Define similarly $R_T(\mu_T)=\frac{1}{|G|}\sum_{g\in G} g_{\#}\mu_T$, for $\mu_T\in \mathscr{M}_+(K)$.
\item We say that a measure $\mu\in \mathscr{M}_+([0,T] \times X \times U)$ (respectively, $\mu_T\in \mathscr{M}_+(K)$) is $G$-invariant if $g_{\#}\mu=\mu$ (respectively, $g_{\#}\mu_T=\mu_T$), for every $g\in G$. Denote the set of $G$-invariant pairs $(\mu,\mu_T)\in \mathscr{M}_+([0,T] \times X \times U)\times \mathscr{M}_+(K)$ of measures which are solution of Problem~\eqref{opt:relaxed} by $\mathcal{M}^G$.

\item For every $G$-invariant $(\mu^\star,\mu_T^\star) \in \mathscr{M}_+([0,T] \times X \times U)\times \mathscr{M}_+(K)$, define the (possibly infinite dimensional) convex set \[\mathcal{A}_{\mu^\star,\mu_T^\star}=\{(\mu,\mu_T)\in \mathscr{M}_+([0,T]\times X \times U)\times \mathscr{M}_+(K) \ \mid  R(\mu)=\mu^\star, \ R_T(\mu_T)=\mu_T^\star \}.\] 
\end{itemize}
\end{definition}

A first observation is that if the control system and initial condition $x_0$ and final target set $K$ are invariant by $G$, then the Liouville Equation~\eqref{Liouville} is invariant by $G$, i.e., the set $\mathcal{M}$ is $G$-invariant, as stated in next Lemma.

\begin{lemma}\label{Liouville_invariant}
Assume that the initial condition $x_0\in X$ and final target set $K\subseteq X$ are invariant by $G$. Then Equation~\eqref{Liouville} is invariant by $G$, i.e., the pair $\left(\mu, \mu_T\right) \ \text{satisfy~\eqref{Liouville}}$ if and only if $\left(g_{\#}\mu, g_{\#}\mu_T\right) \ \text{satisfy~\eqref{Liouville}}$, for every $g\in G$.
\begin{proof}
Consider $\mu$ satisfying~\eqref{Liouville} and $\phi\in C^1([0,T]\times X)$. For every $g\in G$, we have
\begin{align*}
\int_K \phi(T,g(x))d\mu_T(x)-\phi(0,g(x_0))
=& \int_{[0,T]\times X \times U} \left( \frac{\partial \phi}{\partial t}(t,g(x))+\langle \nabla_x \phi(t,g(x)), g(f(x,u))\rangle \right) d\mu(t,x,u)\\
=& \int_{[0,T]\times X \times U} \left( \frac{\partial \phi}{\partial t}(t,g(x))+\langle \nabla_x \phi(t,g(x)), f(g(x),\tau(g)(u))\rangle \right) d\mu(t,x,u)\\
=& \int_{[0,T]\times X \times U} \left( \frac{\partial \phi}{\partial t}(t,x)+\langle \nabla_x \phi(t,x), f(x,u)\rangle \right) dg_{\#}\mu(t,x,u),
\end{align*}
where the last equality is obtained by symmetry invariance of $f$.
We deduce the result using the equality $g(x_0)=x_0$ and $\int_K \phi(T,g(x))d\mu_T(x)=\int_K \phi(T,x)dg_{\#}\mu_T(x)$ by $G$-invariance of $x_0$ and $K$. 
\end{proof}
\end{lemma}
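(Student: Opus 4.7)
The plan is to test the Liouville equation for $(\mu, \mu_T)$ against a function of the form $(t,x)\mapsto \phi(t, g(x))$ and show that after manipulation the identity one obtains is precisely the Liouville equation for $(g_\# \mu, g_\# \mu_T)$ tested against $\phi$. Since every $g\in G\subset\GL_n(\R)$ is invertible, the substitution $\phi \mapsto \phi(\cdot, g(\cdot))$ is a bijection on $C^1([0,T]\times X)$, so proving one implication for every $g\in G$ already yields the converse by swapping $g$ with $g^{-1}\in G$.

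The computation rests on three ingredients. First, the chain rule, using that $g$ acts linearly on $\R^n$, gives
\[
\partial_t[\phi(t,g(x))] = (\partial_t \phi)(t,g(x)), \qquad \bigl\langle \nabla_x[\phi(t,g(\cdot))](x), v\bigr\rangle = \bigl\langle (\nabla_x \phi)(t,g(x)), g(v)\bigr\rangle;
\]
setting $v = f(x,u)$ and invoking the $G$-invariance of $f$ from Definition~\ref{DEF_SYM} in the form $g(f(x,u)) = f(g(x), \tau(g)(u))$ rewrites the integrand as $(\partial_t \phi + \langle \nabla_x \phi, f\rangle)(t, g(x), \tau(g)(u))$. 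Second, the change-of-variables formula for pushforward measures converts integration of any $F(t, g(x), \tau(g)(u))$ against $\mu$ into integration of $F(t,x',u')$ against $g_\#\mu$, and analogously for $\mu_T$. Third, the $G$-invariance of $x_0$ yields $\phi(0, g(x_0)) = \phi(0, x_0)$, and the $G$-invariance of $K$ guarantees that $g_\#\mu_T$ remains supported in $K$, so the transformed pair is still admissible.

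Assembling these steps, the Liouville identity for $(\mu,\mu_T)$ tested against $\phi \circ g$ becomes, term by term, the Liouville identity for $(g_\#\mu, g_\#\mu_T)$ tested against $\phi$. As $\phi$ was arbitrary in $C^1([0,T]\times X)$, the forward direction of the iff is established; the converse follows by applying the same reasoning to $g^{-1}\in G$.

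The main obstacle I expect is purely bookkeeping: keeping the chain rule and the direction of the invariance of $f$ straight (in particular distinguishing $g(f(x,u))$ from $f(g(x),\tau(g)(u))$) so that the three ingredients interlock cleanly under pushforward. No nontrivial analytic estimate is needed, since $G$ is finite, $\mu$ and $\mu_T$ are finite Borel measures on compact sets, and all test functions are $C^1$ on a compact domain.
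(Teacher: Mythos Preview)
Your proof is correct and follows essentially the same approach as the paper: test the Liouville equation against $\phi(t,g(x))$, apply the chain rule and the $G$-invariance of $f$, then rewrite everything via pushforward and use the invariance of $x_0$ and $K$. Your write-up is in fact slightly more complete, since you make explicit the converse direction via $g^{-1}\in G$, which the paper leaves implicit.
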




We state the following structural results for the set of $G$-invariant solutions of Problem~\eqref{opt:relaxed}, whose proof is postponed to Appendix~\ref{PROOF_INVARIANCE}.
\begin{proposition}\label{INVARIANCE}
  
The set $\mathcal{M}^G\subseteq \mathcal{M}$ defined in Definition~\ref{MEAS_DEFI} satisfies the following properties:
\begin{itemize}
    \item 
    We have the equality \[\mathcal{M}^G=\{\left(R(\mu),R_T(\mu_T)\right)\mid \ (\mu,\mu_T)\in \mathcal{M}\},\] and $\mathcal{M}^G$ is a weak-$\star$ compact convex set.
    \item The extreme points of $\mathcal{M}^G$ are the images by the Reynold operator of the occupation measures optimal in~\eqref{opt:relaxed}, i.e., \[\mathcal{E}_G=\{(R(\mu),R_T(\mu_T)) \mid \ (\mu,\mu_T)\in \mathcal{M} \ \text{is a pair of occupation measures}\}.\]
\item We have $\mathcal{M}^G=\overline{\text{Conv}(\mathcal{E}_G)}$.
\end{itemize}
\end{proposition}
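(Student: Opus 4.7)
The plan is to verify the three items in sequence, using that the Reynolds operators $R$ and $R_T$ (as defined in Definition~\ref{MEAS_DEFI}) are continuous linear projections of $\mathscr{M}_+$ onto the cone of $G$-invariant measures, together with the extreme point structure of $\mathcal{M}$ from Corollary~\ref{EXTR_POINT} and the Krein--Milman theorem.

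For item 1, the inclusion $\mathcal{M}^G \subseteq \{(R(\mu), R_T(\mu_T)) : (\mu,\mu_T) \in \mathcal{M}\}$ is immediate since $R$ and $R_T$ fix $G$-invariant pairs. For the reverse inclusion, given $(\mu,\mu_T)\in\mathcal{M}$ I would first show that each pushforward $(g_\#\mu, g_\#\mu_T)$ still lies in $\mathcal{M}$: the Liouville equation is preserved by Lemma~\ref{Liouville_invariant}, non-negativity and the support constraints are preserved by $G$-invariance of $X$, $U$, $K$, and the cost is preserved by the assumed $G$-invariance of $h$ and $H$ via a change of variables. Convexity of $\mathcal{M}$ then places $(R(\mu),R_T(\mu_T))$ in $\mathcal{M}$, and it is $G$-invariant since left-translation by any element of $G$ permutes the summands of the Reynolds average. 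Convexity of $\mathcal{M}^G$ is inherited from $\mathcal{M}$, and weak-$\star$ compactness follows by writing $\mathcal{M}^G$ as the intersection of the weak-$\star$ compact $\mathcal{M}$ with the weak-$\star$ closed linear subspace cut out by the equations $g_\#\mu=\mu$ and $g_\#\mu_T=\mu_T$ for every $g\in G$.

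For item 2, in the direction $\text{ext}(\mathcal{M}^G) \subseteq \mathcal{E}_G$ I would take an extreme $(\mu^\star,\mu_T^\star)$, write it as $(R(\mu_0),R_T(\mu_{T,0}))$ via item 1, apply the Choquet-type integral representation arising from Corollary~\ref{EXTR_POINT} to decompose $(\mu_0,\mu_{T,0})$ over extreme occupation measures, push the decomposition through the continuous linear maps $R,R_T$, and exploit extremality to collapse the resulting integral to a single $G$-orbit. For the reverse direction $\mathcal{E}_G \subseteq \text{ext}(\mathcal{M}^G)$, given a convex decomposition $(R(\nu),R_T(\nu_T)) = \theta(R(\mu_1),R_T(\mu_{T,1})) + (1-\theta)(R(\mu_2),R_T(\mu_{T,2}))$ with $\mu_i\in\mathcal{M}$, I would Choquet-decompose each $\mu_i$ into a mixture of occupation measures whose Reynolds average equals $R(\nu)$; the disjointness of supports of occupation measures whose trajectories lie in distinct $G$-orbits then forces these mixtures to be supported on the orbit of $\nu$, so $R(\mu_i)=R(\nu)$. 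Item 3 is then a direct application of the Krein--Milman theorem to the weak-$\star$ compact convex $\mathcal{M}^G$ inside the locally convex space of finite Borel measures, combined with $\text{ext}(\mathcal{M}^G) = \mathcal{E}_G$ from item 2.

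The main obstacle is the converse direction of item 2 and, more generally, the rigorous handling of Choquet-type integrals over extreme occupation measures. The key auxiliary fact I would need is that $R(\nu_1) = R(\nu_2)$ for extreme occupation measures if and only if the underlying trajectories lie in the same $G$-orbit; I expect this to follow from a support-disjointness argument, since $\supp(R(\nu)) = \bigcup_{g\in G}\supp(g_\#\nu)$ is a finite union, and occupation measures with genuinely distinct trajectories modulo $G$ produce distinct such unions.
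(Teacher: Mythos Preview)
Your treatment of item~1 and item~3 matches the paper's proof: the equality $\mathcal{M}^G=(R,R_T)(\mathcal{M})$ via Lemma~\ref{Liouville_invariant} and convexity of $\mathcal{M}$, weak-$\star$ compactness by realizing $\mathcal{M}^G$ as a closed subset of the compact $\mathcal{M}$, and item~3 by Krein--Milman. For the inclusion $\text{ext}(\mathcal{M}^G)\subseteq\mathcal{E}_G$ in item~2, your Choquet-decompose-then-push-through-$(R,R_T)$ argument is exactly the paper's route; in fact your phrasing ``collapse the resulting integral to a single $G$-orbit'' is slightly more careful than the paper, which simply asserts a contradiction once the representing measure $(\nu,\nu_T)$ is not a Dirac, glossing over the possibility that several occupation measures in the same orbit share the same Reynolds image.

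The substantive difference is that the paper's proof does \emph{not} address the reverse inclusion $\mathcal{E}_G\subseteq\text{ext}(\mathcal{M}^G)$ at all: its argument for item~2 establishes only that every extreme point of $\mathcal{M}^G$ arises as the Reynolds image of some occupation pair. You are therefore attempting strictly more than the paper proves, and you are right to flag this direction as the main obstacle. Your proposed support-disjointness argument is reasonable in spirit---because time is part of the state space, $\supp(R(\nu))=\bigcup_{g\in G}\{(t,g(x(t)),\tau(g)(u(t))):t\in[0,T]\}$ genuinely encodes the $G$-orbit of the trajectory---but the step from ``the Choquet mixture representing $m_i$ has support contained in $\supp(R(\nu))$'' to ``$m_i=R(\nu)$'' is not automatic: equal supports do not determine a measure. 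To close the gap you would need to disintegrate $m_i$ along its time marginal (which is Lebesgue for every feasible measure here), argue that for a.e.\ $t$ the conditional measure on $X\times U$ is supported on the finite $G$-orbit of $(x(t),u(t))$, and then use $G$-invariance of $m_i$ to force that conditional to be the uniform measure on the orbit. This is doable but is real additional work beyond what either you or the paper have written down.
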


 Note that the results of this paper are not specific to the case where the set of optimal trajectories $\mathcal{T}$ for Problem~\eqref{optimalcontrol} is finite. However, in the latter case, we have the following simple features, which are illustrated on the simplified finite dimensional representation of Figure~\ref{FIBRATION}.
 

\begin{figure}[!ht]
\begin{center}
        \includegraphics[scale=0.8]{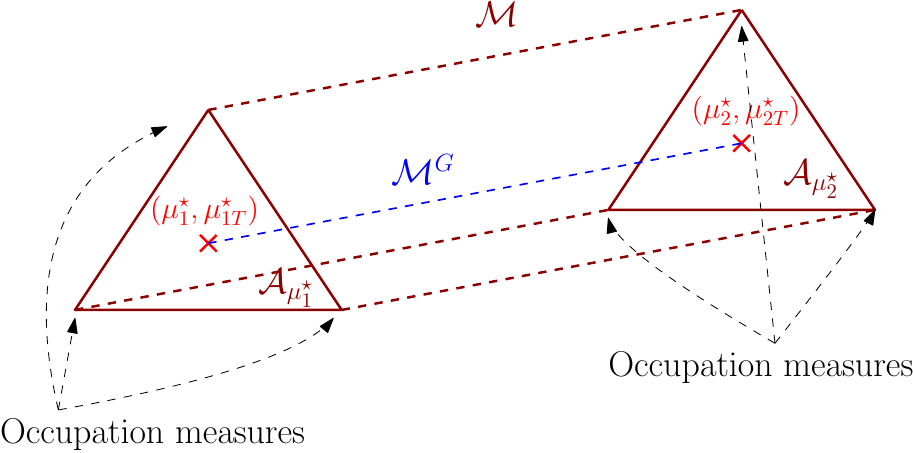}
        \caption{
        Simplified representation of the set of optimal measures $\mathcal{M}$ for Problem~\eqref{opt:relaxed}. The occupation measures are the extreme points of $\mathcal{M}$, and $(\mu_1^\star,\mu_{1T}^\star)$, $(\mu_2^\star,\mu_{2T}^\star)$ are the extreme points of $\mathcal{M}^G$.}
        \label{FIBRATION}
\end{center}
\end{figure}

\begin{remark}[Finite dimensional case]\label{FINITE_DIM}
When there is a finite number $k\geq 1$ of optimal trajectories, the set of optimal measures is \[\mathcal{M}=\{(\mu,\mu_T) \in \mathscr{M}_+([0,T]\times X \times U)\times \mathscr{M}_+(K) \mid \mu=\sum_{i=1}^k \lambda_i \nu^i, \mu_T=\sum_{i=1}^k \lambda_i \nu_T^i \ \sum_{i=1}^k \lambda_i=1, \ \lambda_i\geq 0\},\]
where $(\nu^i,\nu_T^i)_{i\in \{1,\dots,k\}}$ are pairs of occupation measures associated with the optimal trajectories. In this case, the group action induces a permutation of the extreme points of $\mathcal{M}$.
\end{remark}

\subsection{Group action on pseudo-moment sequences and symmetry-adapted moment matrices~\label{SYM_MOM}}
Here we give some important properties concerning the numerical resolution of the SDPs. In particular, using the classical isotopic decomposition of $G$-modules, we define the symmetry reduced moment matrices, as it was done in~\cite{SYMSDP}.
It is first presented for general polynomials depending on a variable $x \in \R^n$; its adaptation to the variables $(t,x,u)$ involved in optimal control problems is straightforward.
It requires standard tools of representation theory, which can be found, for instance in~\cite{serre1977linear}.
Consider a group action of a finite group $G$ of $GL_m(\R)$ on a subspace $X\subseteq \R^n$.
\begin{definition}\label{INVARIANT_SEQUENCE}
\begin{itemize}
\item A linear map $L:\R[x]\to \R$ is $G$-invariant if $L(P(g(x)))=L(P(x))$ for every $g\in G$ and $P\in \R[x]$.
\item A pseudo-moment sequence $y=(y_\alpha)_{\alpha\in \mathcal{B}}$ associated with a linear map $L:\R[x]\to \R$ and a monomial basis $\mathcal{B}$ of $\R[x]$ is $G$-invariant if $L$ is $G$-invariant.
\item Define the Reynold operators on linear maps $L:\R[x] \to \R$, by
$R(L)=\frac{1}{|G|} \sum_{g\in G} L^g$, where $L^g(P)(x)=L(P(g(x)))$, for every $g\in G$ and $P\in \R[x]$. 
\item For a polynomial $P\in \R[x]$, define the polynomial $\mathcal{R}(P)=\frac{1}{|G|} \sum_{g\in G} P^g$.
\end{itemize}
\end{definition}

As described in~\cite[Section 3]{SYMSDP}, the set $\R[x]$ can be seen as a real $G$-module defined by a group action defined for a polynomial $P\in \R[x]$, define $P^g(x)=P(g(x))$ for every $g\in G$. As a direct consequence of the so-called \emph{isotopic decomposition} of $G$-modules, the set $\R[x]$ can be decomposed as a sum of irreducible $G$-modules, as 
\begin{equation}\label{IRRED_DEC}
\R[x]\otimes \C=\bigoplus_{l=1}^q V_l,
\end{equation}
where 
\[V_l=\bigoplus_{j\in J_l} W_{lj}\] where for fixed $l\in \{1,\dots,q\}$, the sets $W_{lk}$ are pairwise isomorphic complex irreducible $G$-modules, in the sense that they do not admit non-trivial $G$-invariant subspaces.

Now we can give the following definition of symmetry-reduced moments matrices, which are defined according to this decomposition, by
considering a basis $(s_{j,v}^l)_v$ of $W_{lj}$, setting $\mathcal{S}^l=\{s_{j,1}^l, \ j\in J_l\}$, and defining the truncated sets $\mathcal{S}^l_k=\{(s_{1}^l,\dots,s_{\eta_l}^l)\}\subseteq \mathcal{S}^l$ of the basis elements of $\mathcal{S}^l$ of degree at most $k$. Note that, as claimed in~\cite[Section 3]{SYMSDP}, the set $\mathcal{S}^l\subseteq \C[x]$ can be assumed to be real.

\begin{definition}[Symmetry-reduced moment and localizing matrices]\label{RED_MATRIX}
Define the symmetry-reduced moments matrix, defined for a $G$-invariant pseudo-moment sequence $y=(y_\alpha)_{\alpha\in \mathcal{B}}$ associated with a $G$-invariant linear map $L$, as \[M_k^G(y)=\bigoplus_{l=1}^k M_{kl}^G(y),\] where the $(u,v)$ entry of  $M_{kl}^G(y)$ is equal to $L(s_u^l s_v^l)$.
\end{definition}

\begin{remark}
\begin{itemize}
\item For the study of Problem~\eqref{optimalcontrol}, one needs to consider the variables $(t,x,u)$ and the action of an element $g\in G$ as $(t,x,u)\mapsto (t,g(x),\tau(g)(u))$, as described in Section~\ref{FRAME}. It is then necessary to introduce both $G$-invariant pseudo-moment sequence w.r.t. the variables $(t,x,u)$ and $x$, that we will denoted, respectively as $z=(z_\gamma)_{\gamma\in \mathcal{B}}$ and $y=(y_\alpha)_{\alpha\in \tilde{\mathcal{B}}}$, for a monomial basis $\mathcal{B}$ (respectively, $\tilde{\mathcal{B}}$) of $\R[t,x,u]$ (respectively, $\R[x]$).
\item The same symmetry reduction holds for the partial moment matrices associated with $z(t)$, $z(u)$ and $z(x)$, as well as for the localizing matrices, as defined in~Definition~\ref{MOM_MAT}.
\end{itemize}
\end{remark}

To this purpose, we introduce the following set.
\begin{definition}\label{INV_SEK}
Let $\mathcal{K}^G$ be the set of pairs $(y,z)$ of $G$-invariant sequences written as $y=(y_\alpha)_{\alpha\in \tilde{\mathcal{B}}}$ and $z=(z_\gamma)_{\gamma\in \mathcal{B}}$, for some monomial basis $\mathcal{B}$ of $\R[x]$ and $\tilde{\mathcal{B}}$ of $\R[t,x,u]$.
\end{definition}


\subsubsection{Sign-symmetries}
In the numerical implementations proposed in Section~\ref{APPLIQ}, we will consider the practical case of  sign symmetries for the optimal control problem, whose algebraic structure is described below.
 \begin{definition}\label{LACDES}
We say that Problem~\eqref{optimalcontrol} has a sign-symmetry if it is $G$-invariant in the sense of Definition~\ref{DEF_SYM}, with $G$ is generated by a finite set of $n\times n$ diagonal matrices $(D_j)_{j\in\{1,\dots,s\}}$, $s\geq 1$, with coefficients equal to $\pm 1$, such that, for every $j\in \{1,\dots,s\}$, $\tau(D_j)$ is a $m\times m$ diagonal matrix with coefficients equal to $\pm 1$. 
 \end{definition}


In the case where Problem~\eqref{optimalcontrol} has a sign-symmetry generated by a single $n\times n$ diagonal matrix $D$ we use the following decomposition and reduction of moment matrices:
\begin{itemize}
\item  We consider the truncations of the irreducible decomposition~\eqref{IRRED_DEC} to degree $k$ as \[\R[t,x,u]_{k}=V_k^1\bigoplus V_k^2,\] where \[V_k^1=\{P\in \R[t,x,u]_{k} \mid P(t,D(x),\tau(D)(u))=P(t,x,u)\},\] and \[V_k^2=\{P\in \R[t,x,u]_{k} \mid P(t,D(x),\tau(D)(u))=-P(t,x,u)\}.\]
Similarly consider \[\R[x]_{k}=\tilde{V}_k^1\bigoplus \tilde{V}_k^2,\] where \[\tilde{V}_k^1=\{P\in \R[x]_{k} \mid P(D(x))=P(x)\},\] and \[\tilde{V}_k^2=\{P\in \R[x]_{k} \mid P(D(x))=-P(x)\}.\] 
\item We generate $G$-invariant pseudo-moment sequences $(y,z)\in \mathcal{K}^G$ as sequences indexed w.r.t. the products of elements of monomial basis $\mathcal{S}_k^j$ associated with $V_k^j$ (respectively, $\tilde{\mathcal{S}}_k^j$ associated with $\tilde{V}_k^j$), for $j\in \{1,2\}$.
\item 
As defined in Definition~\ref{RED_MATRIX}, the moment matrix
$M_k^G(z)$ can be decomposed as two blocks \[M_k^G(z)=M_{k1}^G(z)\bigoplus M_{k2}^G(z),\] where for $l\in \{1,2\}$, the square matrix $M_{kl}^G(z)$ has dimension equal to $\text{dim} (V_k^l)$.
\item The matrix $M_k^G(y)$ can be decomposed as two blocks
$M_k^G(y)=M_{k1}^G(y)\bigoplus M_{k2}^G(y)$, where for $l\in \{1,2\}$, the square matrix $M_{kl}^G(y)$ has dimension equal to $\text{dim}  (\tilde{V}_k^l)$.
\end{itemize}

\begin{example}\label{EXX}
For the case $n=m=1$, $D=\tau(D)=-1$, $k=2$, in the concatenation of the basis $\mathcal{S}_2^1=(1,t,t^2,x^2,xu,u^2)$ and  $\mathcal{S}_2^2=(x,u,tx,tu)$ and associated pseudo-moment sequences $z_{\mathcal{S}_2^1}$ indexed w.r.t. products of the elements of $\mathcal{S}_2^1$  and $z_{\mathcal{S}_2^2}$ indexed w.r.t. products of the elements of $\mathcal{S}_2^2$,  the moment matrix reads as two blocks \[M_2^G(z)=\begin{pmatrix}
M_2(z_{\tilde{\mathcal{S}}_2^1})&0\\
0&M_2(z_{\tilde{\mathcal{S}}_2^2})
\end{pmatrix},
\]
of size $6\times 6$ and $4\times 4$, instead of a squared $10\times 10$ matrix in the dense case.
Similarly, in the concatenation of the basis $\tilde{\mathcal{S}}_2^1=(1,x^2)$ and  $\tilde{\mathcal{S}}_2^2=(x)$, and associated pseudo-moment sequences $y_{\tilde{\mathcal{S}}_2^1}$ indexed w.r.t. the elements of $\tilde{\mathcal{S}}_2^1$  and $y_{\tilde{\mathcal{S}}_2^2}$ indexed w.r.t. the elements of $\tilde{\mathcal{S}}_2^2$,,  the moment matrix reads as two blocks \[M_2^G(y)=\begin{pmatrix}
M_2(y_{\tilde{\mathcal{S}}_2^1})&0\\
0&M_2(y_{\tilde{\mathcal{S}}_2^2})
\end{pmatrix}
\] of size $2\times 2$ and $1\times 1$.
\end{example}

\begin{remark}\label{sev}
In the case where $G$ is generated by several diagonal matrices in the sense of Definition~\ref{LACDES}, the moment matrices can be reduced into more numerous blocks, in the same way as what is done for the dual version on positive polynomials in~\cite[Section~III-C]{LOF}.
\end{remark}


\section{Symmetry adapted relaxation for optimal control \label{HYERES_ARCHI}}

Now we adapt the moment-SOS relaxation exposed in~\cite{OCP08} by adapting the polynomial optimization results of~\cite{SYMSDP} to the setting of Problem~\eqref{opt:relaxed}. 

We restrict Problem~\eqref{opt:relaxed} to the optimization over $G$-invariant measures.
\begin{equation} \label{opt:relaxed_bis}\tag{$\mathcal{O}^G$}
\begin{aligned}
\rho^G:= \inf\limits_{\mu,\mu_T} \quad & \langle \mu,h\rangle\\
\textrm{s.t.} \quad &   (\mu,\mu_T) \text{\;\,satisfy (\ref{Liouville})}\\
   & \text{supp}(\mu_T) \subseteq K  \\
  &(\mu,\mu_T) \in \mathscr{M}_+([0,T]\times X\times U) \times \mathscr{M}_+(K)   \\
  &\mu, \mu_T \ \text{are} \ G-\text{invariant}, \ \text{i.e.} \ g_{\#}\mu=\mu, \ g_{\#}\mu_T=\mu_T, \ \text{for every} \ g\in G.
\end{aligned}
\end{equation}

As Problem~\eqref{opt:relaxed} is feasible, we can consider $\mu,\mu_T$ in its feasible set, and obtain directly by Lemma~\ref{Liouville_invariant} and the affine dependence of Equation~\eqref{Liouville} in the pair $(\mu,\mu_T)$ that $\left(\mu^\star,\mu_T^\star\right)=\left(R(\mu),R_T(\mu_T)\right)$ belongs to the feasible set of Problem~\eqref{opt:relaxed_bis}. It follows that Problem~\eqref{opt:relaxed_bis} is feasible, and the corresponding cost is unchanged by the transformation via the transformation $(\mu,\mu_T)\mapsto (R(\mu),R_T(\mu_T))$, so that $\rho^G=\rho^\star$, where $\rho^\star$ is the optimal cost of Problem~\eqref{opt:relaxed}. As a consequence, the set of optimal measures for Problem~\eqref{opt:relaxed_bis} is exactly the set $\mathcal{M}^G$ defined in Definition~\ref{MEAS_DEFI}.

\begin{remark}\label{NO_PUSH}
Note that Problem~\eqref{opt:relaxed_bis} does not correspond to a measure relaxation of an optimal control problem in the form described by~\eqref{optimalcontrol}.
Contrarily to what one could think at first sight and to what has been done in different settings in~\cite{GG99,HUBERT,OHSAWA_SYMMETRIES}, we do not map the trajectories of~\eqref{optimalcontrol} onto trajectories of a reduced control system via a change of variables. 
Instead, we take advantage of the linearity of the measure relaxation formulations~\eqref{opt:relaxed} and \eqref{opt:relaxed_bis}.
\end{remark}



In order to propose a suitable semi-definite approximation of Problem~\eqref{opt:relaxed_bis} with convergence guarantees, we propose to follow the steps of~\cite[Section 3]{SYMSDP} via the use of decompositions described in Section~\ref{tech} allowing one to decompose the moment matrices and localizing matrices into several low dimensional blocks, as defined in Definition~\ref{RED_MATRIX}. 

For $(y,z)\in \mathcal{K}^G$, denote the symmetry reduced moment matrices by $M_k^G(y)$ and $M_k^G(z)$ defined for $k\geq 1$, as in Definition~\ref{RED_MATRIX}. 
Following the steps of~\cite{SYMSDP}, we propose a hierarchy indexed by $k\geq k_0$, where $k_0$ is chosen as in Section~\ref{LET_DENSE}, denoted by~\eqref{sdp_sym_2}:
\begin{equation} \tag{$Q_k^G$}\label{sdp_sym_2}
\begin{aligned}
\rho_k^G := \inf\limits_{(y,z)\in \mathcal{K}^G} \quad & L_z(h)+L_y(H)\\
\textrm{s.t.} \quad &   M_k^G(y),M_k^G(z)\succeq 0\\
   & M_{k-\lceil\text{deg}(v_j)/2\rceil}^G(v_j z(x))\succeq 0  \\
   &M_{k-\lceil\text{deg}(w_j)/2\rceil}^G(w_j z(u))\succeq 0 \\
  &M_{k-\lceil\text{deg}(\theta_j)/2\rceil}^G(\theta_j y)\succeq 0 \\
  &M_{k-1}^G(t(1-t) z(t))\succeq 0 \\
  & L_y(\phi)-L_z\left(\partial \phi/ \partial t+\langle \nabla_x \phi, f \rangle \right)=\phi(0,x_0), \ \forall \phi=(t^s x^\alpha) \in \R[t,x] \ \text{s.t.} \ s+|\alpha|\leq 2k+1-\text{deg}(f)\\
\end{aligned}
\end{equation}

\begin{remark}
In accordance with Definition~\ref{INVARIANT_SEQUENCE}, $G$-invariant pseudo-moment sequences $(y,z)\in \mathcal{K}^G$ can be generated as $R(L)(\mathcal{B})$, where $\mathcal{B}$ is a monomial basis of $\R[x]$ and $L:\R[x]\to \R$ is a linear map. The same holds for $z=(z_{\gamma})_{\gamma}$, replacing $\R[x]$ by $\R[t,x,u]$.
Up to an elimination of redundant terms, those sequences can be reindexed w.r.t. a basis of invariant polynomials of $\R[x]$ and $\R[t,x,u]$, the latter being further introduced in Defintion~\ref{INVARIANT_POLY}.
\end{remark}

\begin{thm}
Assume that Assumption~\ref{ass:nogap} and~\ref{ass:condition} hold.
Then the sequence $(\rho_k^G)$ is non-decreasing and converges to $\rho^*$ when $k\to +\infty$.
\begin{proof}
We adapt the arguments of~\cite[Theorem 3.7]{SYMSDP}. 
Let $y,z$ be two pseudo-moment sequences associated with linear maps $L:\R[t,x,u] \to \R$ and $\bar{L}: \R[x] \to \R$, which are solution of~\eqref{sdp_dense}, which is feasible by~\cite[Theorem 3.6]{OCP08}.
Then the two $G$-invariant sequences $\tilde{y},\tilde{z}$ associated with $R(L)$ and $R_T(\bar{L})$, where $R(L)$ and $R_T(\bar{L})$ are defined as in Definition~\ref{INVARIANT_SEQUENCE}, are solution of~\eqref{sdp_sym_2}, so that we have that the SDP~\eqref{sdp_sym_2} is feasible and $\rho_k^G=\rho_k$.
As $\rho_k\to \rho^\ast$ from Proposition~\ref{CONV_MOMENTUM}, it follows that $\rho_k^G \to \rho^\ast$, when $k\to \infty$.
\end{proof}
\end{thm}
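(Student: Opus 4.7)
The plan is to prove the equality $\rho_k^G = \rho_k$ for every $k \geq k_0$, after which both monotonicity of $(\rho_k^G)$ and its convergence to $\rho^\star$ follow immediately from Proposition~\ref{CONV_MOMENTUM}. The argument is a two-sided comparison built around the Reynolds operator, analogous to~\cite[Theorem 3.7]{SYMSDP} but adapted to the Liouville constraint.

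For the direction $\rho_k \leq \rho_k^G$, I would show that the symmetry-reduced semidefinite constraints encode exactly the original positivity conditions when restricted to $G$-invariant sequences. Indeed, by the isotypic decomposition~\eqref{IRRED_DEC} truncated at degree $2k$, there is an orthogonal change of monomial basis under which $M_k(y)$ becomes block-diagonal with the blocks $M_{kl}^G(y)$ appearing in Definition~\ref{RED_MATRIX}, whenever $y \in \mathcal{K}^G$; similarly for $M_k(z)$ and for each localizing matrix. Hence any $(y,z) \in \mathcal{K}^G$ feasible for~\eqref{sdp_sym_2} is automatically feasible for~\eqref{sdp_dense} with the same value of the objective, which yields $\rho_k \leq \rho_k^G$.

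For the reverse inequality, I would start from any feasible pair $(y,z)$ for~\eqref{sdp_dense}, realized by linear functionals $L_z : \mathbb{R}[t,x,u] \to \mathbb{R}$ and $L_y : \mathbb{R}[x] \to \mathbb{R}$, and pass to the Reynolds averages $(\tilde y, \tilde z)$ associated with $R(L_y)$ and $R(L_z)$ as in Definition~\ref{INVARIANT_SEQUENCE}. Three facts need checking. First, $(\tilde y, \tilde z) \in \mathcal{K}^G$ by construction. Second, the Liouville affine constraint is preserved: by Lemma~\ref{Liouville_invariant} each $g_\#$-pushforward of a feasible pair is again feasible, and the constraint is affine in $(y,z)$, so the average still satisfies it. Third, all positive-semidefiniteness conditions are preserved: for each $g \in G$ the sequence obtained by the action of $g$ on $y$ has a moment matrix unitarily congruent to $M_k(y)$, hence positive semidefinite, and averaging preserves positive semidefiniteness; the same holds for $z$ and for each localizing matrix, using $G$-invariance of the sets $X$, $K$, $U$ (so that the $G$-orbit of each $v_j$, $w_j$, $\theta_j$ still yields polynomials nonnegative on the same sets, after possibly replacing the original family by the union of its orbits without changing the feasible set). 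Finally, since $h$ and $H$ are $G$-invariant, $L_{\tilde z}(h) = L_z(h)$ and $L_{\tilde y}(H) = L_y(H)$, so the cost is unchanged. This shows $(\tilde y, \tilde z)$ is feasible for~\eqref{sdp_sym_2} with the same cost, giving $\rho_k^G \leq \rho_k$.

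Combining both inequalities yields $\rho_k^G = \rho_k$ for all $k \geq k_0$, so $(\rho_k^G)$ inherits monotonicity from $(\rho_k)$ and converges to $\rho^\star$ by Proposition~\ref{CONV_MOMENTUM}. The main technical obstacle is the careful treatment of the localizing matrices under the Reynolds averaging: an individual $v_j$ need not be $G$-invariant, so one must argue that its $G$-orbit still consists of polynomials nonnegative on $X$, and then that the localizing matrix of the averaged sequence decomposes as a sum of positive semidefinite matrices indexed by this orbit. Once this is handled, the rest of the verification is a routine combination of the affine structure of the Liouville constraint and the $G$-invariance of the cost.
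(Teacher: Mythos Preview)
Your proposal is correct and follows essentially the same route as the paper: establish $\rho_k^G = \rho_k$ via Reynolds averaging of a feasible pair for~\eqref{sdp_dense}, then invoke Proposition~\ref{CONV_MOMENTUM}. If anything you are more thorough than the paper's own proof: you argue both inequalities explicitly (the paper states only the averaging direction and then asserts equality), and you flag the subtlety that the individual defining polynomials $v_j,w_j,\theta_j$ need not be $G$-invariant, requiring the orbit-closure trick for the localizing matrices --- a point the paper passes over in silence.
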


The same result as Proposition~\ref{CONV_MOMENTUM} can be proved for the symmetric relaxation, as a direct consequence of~\cite[Theorem 5]{TACCHI22}.

\begin{prop}\label{CONV_MOM}
Assume that Assumption~\ref{ass:nogap} and~\ref{ass:condition} hold and that Problem~\eqref{opt:relaxed_bis} has a unique solution $(\mu,\mu_T)$, i.e., that $\mathcal{M}^G$ is a singleton. Then any sequences $(z_{s \alpha \beta}^k)_{k\geq k_0}$ and $(y_{\alpha}^k)_{k\geq k_0}$ optimal for $(Q_k^G)$, we have, for every $s\in \N$, $\alpha\in \N^n$ and  $\beta\in\N^m$,
\[z_{s\alpha\beta}^k \to \int_{[0,T]\times X \times U} t^{s}x^{\alpha}u^{\beta} d\mu(t,x,u),\] \[y_{\alpha}^k \to \int_{K} x^{\alpha} d\mu_T(x),\] when $k\to +\infty$. 
\end{prop}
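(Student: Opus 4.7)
The plan is to reduce the statement to the already-established convergence Proposition~\ref{CONV_MOMENTUM} by exploiting the equivalence between $G$-invariant pseudo-moment sequences (with their block-diagonalized symmetry-reduced matrices) and ordinary pseudo-moment sequences whose associated measures happen to be $G$-invariant. The starting observation is that the isotopic decomposition described in Section~\ref{SYM_MOM} gives a change of basis under which the full moment (and localizing) matrices block-diagonalize precisely into the blocks appearing in $M_k^G(\cdot)$. Consequently, for any pair $(y,z)\in\mathcal{K}^G$, the constraint $M_k^G(y)\succeq 0$ is equivalent to $M_k(y)\succeq 0$, and likewise for all localizing matrices. Hence every feasible pair of $(Q_k^G)$ is feasible for $(Q_k)$ with the same objective value, and conversely by Reynolds averaging every feasible pair of $(Q_k)$ yields a feasible pair of $(Q_k^G)$ with equal cost. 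In particular $\rho_k^G=\rho_k\to\rho^\star$, and any optimizer of $(Q_k^G)$ is an optimizer of $(Q_k)$.

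Next, I would perform a standard compactness/extraction argument on the pseudo-moments. Since $X,K,U$ and $[0,T]$ are compact, all moments of bounded degree are uniformly bounded in $k$. By a diagonal extraction, one can find a subsequence along which $(z^{k_j}_{\gamma},y^{k_j}_{\alpha})$ converges entrywise to some $(z^\infty_\gamma, y^\infty_\alpha)$. The PSD and localizing constraints, together with Putinar's Positivstellensatz (Theorem~\ref{PUTINAR_THM}), then ensure that $z^\infty$ and $y^\infty$ are the moment sequences of non-negative measures $\mu^\infty\in\mathscr{M}_+([0,T]\times X\times U)$ and $\mu_T^\infty\in\mathscr{M}_+(K)$. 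Passing to the limit in the linear Liouville constraints (valid at each $k$ for all test polynomials of degree $\leq 2k+1-\deg f$) shows that $(\mu^\infty,\mu_T^\infty)$ satisfies equation~\eqref{Liouville}, and by lower-semicontinuity (here actual continuity) of the linear objective on moment sequences, $\int h\,d\mu^\infty+\int H\,d\mu_T^\infty=\lim_j\rho_{k_j}^G=\rho^\star$, so $(\mu^\infty,\mu_T^\infty)\in\mathcal{M}$.

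The crucial point is then to establish $G$-invariance of the limit. Since $(y^{k_j},z^{k_j})\in\mathcal{K}^G$ for every $j$, for any monomial $x^\alpha$ and any $g\in G$ we have $y^{k_j}_{g\cdot\alpha}=y^{k_j}_\alpha$ (using the identification of $G$-invariant linear functionals with invariance on the monomial basis, as in Definition~\ref{INVARIANT_SEQUENCE}). This invariance passes to the limit, so $y^\infty$ and $z^\infty$ represent $G$-invariant measures, i.e. $(\mu^\infty,\mu_T^\infty)\in\mathcal{M}^G$. The uniqueness assumption $\mathcal{M}^G=\{(\mu,\mu_T)\}$ then forces $(\mu^\infty,\mu_T^\infty)=(\mu,\mu_T)$.

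The conclusion follows by a standard subsequence-of-subsequence argument: every subsequence of $(z^k,y^k)$ has itself a further subsequence converging entrywise to the moments of the unique $(\mu,\mu_T)$, hence the whole sequence converges to those moments, which is exactly the stated conclusion. The main obstacle is the second step — guaranteeing that a weak-$\star$ limit measure exists and satisfies both the Liouville equation and the support constraints — but this is precisely the content of Putinar's theorem applied to the limit pseudo-moment sequences, so it is handled in the same way as in~\cite[Theorem 5]{TACCHI22} that the statement references; the extra work specific to the symmetric setting is just the equivalence $M_k^G\succeq 0 \Leftrightarrow M_k\succeq 0$ on $\mathcal{K}^G$ and the passage of $G$-invariance to the limit, both of which are immediate from the definitions.
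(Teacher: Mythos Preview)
Your proof is correct and follows the same approach the paper (implicitly) takes: the paper does not spell out a proof but simply invokes~\cite[Theorem 5]{TACCHI22}, and your argument is precisely that theorem's compactness-plus-Putinar extraction, supplemented by the equivalence $M_k^G\succeq 0\Leftrightarrow M_k\succeq 0$ on $\mathcal{K}^G$ (already used in the preceding theorem) and the passage of $G$-invariance to the limit. One minor notational point: writing $y^{k_j}_{g\cdot\alpha}=y^{k_j}_\alpha$ presumes that $G$ permutes monomials, which need not hold for a general linear action; the precise statement is $L_{y^{k_j}}(P\circ g)=L_{y^{k_j}}(P)$ for all $P$, a finite collection of linear identities in the moments that passes to the limit in exactly the same way.
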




\section{A recovery method without symmetry reduction: how to catch extreme points in the set of optimal measures~\label{LINEAR_FUNK}}
In this section, we address the problem of recovering an approximation to an optimal trajectory from a given moment sequence obtained from solving~(\ref{sdp_sym_2}). We start with the case where symmetries are not exploited. Subsequently, we will tackle the recovery problem in the symmetry-reduced case.
 
 \subsection{Christoffel-Darboux kernel for curve reconstruction}
Given a moment sequence, a method to recover the support of the corresponding measure is to use the so-called Christoffel-Darboux kernel, following the steps of~\cite{MARX21} and~\cite[Section 6]{HL22}.
Let $b(t,y)$ be a polynomial basis of $\R[t,y]_k$, $T>0$ and let $Y\subseteq \R^n$ be an arbitrary set, and $f$ be a mapping from $[0,T]$ to $Y$. 
Let $M_k=\int_0^T b(t, f(t)) b(t, f(t))^T dt$ be the moment matrix of order $d=2k$ of the measure
$d\mu(t,y)=\delta_{f(t)}(dy)dt$ on $[0,T]\times Y$.
\begin{definition}\label{CRICRI}
 Define the Christoffel-Darboux (CD) polynomial
\[q_k(t,y)=b(t,y)(M_k+\beta_k \text{Id})^{-1}  \ b(t,y)^T,\] with a regularization parameter $\beta_k=2^{3-\sqrt k}$, and $f_k(t)=\text{min}\left( \text{argmin}_{y\in Y} q_k(t,y) \right)$.
\end{definition}
We have the following important result, which corresponds to~\cite[Theorem 1]{MARX21}.
\begin{thm}\label{CD_CONV}
If the set $S\subseteq [0,T]$ of continuity points of $f$ is such that $[0,T]\setminus S$ has Lebesgue measure zero, then $f_k(t)\to f(t)$ when $k\to +\infty$ for a.e. $t\in [0,T]$, and $\norm{f_k-f}_{L^1([0,T])}\to 0$.
\end{thm}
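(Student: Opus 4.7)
The plan is to exploit a variational characterization of $q_k$. Writing $b(t,y)$ for the monomial basis used in defining $M_k$, every polynomial $p \in \R[t,y]_k$ can be written $p = v^T b$ with $\int_0^T p(s,f(s))^2\,ds = v^T M_k v$ and $\|v\|_2^2 = \|p\|_{\ell^2}^2$. A standard Lagrange multiplier computation then yields
\[
\frac{1}{q_k(t,y)} \;=\; \min\Bigl\{\, \int_0^T p(s,f(s))^2 \, ds + \beta_k \|p\|_{\ell^2}^2 \,:\, p \in \R[t,y]_k,\; p(t,y)=1 \,\Bigr\}.
\]
The whole strategy is to show that this minimum is bounded away from $0$ when $(t,y) = (t,f(t))$ lies on the graph of $f$, but can be made arbitrarily small when $y \neq f(t)$; correspondingly $q_k(t,f(t))$ stays moderate while $q_k(t,y)\to\infty$ elsewhere, forcing the argmin $f_k(t)$ to converge to $f(t)$.

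For the on-graph upper bound, fix a continuity point $t\in S$ and any feasible $p$. Markov's inequality on polynomials of degree $\leq k$ together with continuity of $f$ at $t$ implies that $|p(s,f(s)) - 1|$ stays below $1/2$ on a segment $\{|s-t|\leq \delta_k\}$ of Lebesgue length $\delta_k \gtrsim 1/k^2$, so $\int_0^T p(s,f(s))^2\,ds \gtrsim 1/k^2$ uniformly over feasible $p$. Combined with $\beta_k = 2^{3-\sqrt k}\to 0$ this yields $q_k(t,f(t)) \lesssim k^2$. For the off-graph divergence, given $y\neq f(t)$ a Chebyshev/Weierstrass-type separation argument produces, for each $\varepsilon>0$, a polynomial $p_\varepsilon$ with $p_\varepsilon(t,y)=1$, $|p_\varepsilon|\leq\varepsilon$ on the graph of $f$, and $\|p_\varepsilon\|_{\ell^2}$ controlled; substituting gives $1/q_k(t,y) \leq T\varepsilon^2 + \beta_k\|p_\varepsilon\|_{\ell^2}^2$. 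The sub-exponential scaling $\beta_k = 2^{3-\sqrt k}$ is precisely calibrated so that, letting $\varepsilon_k$ decay at an appropriate rate (e.g., $\varepsilon_k \to 0$ with $\beta_k \|p_{\varepsilon_k}\|_{\ell^2}^2\to 0$), one obtains $q_k(t,y)\to\infty$.

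Combining these two bounds with the definition $f_k(t) = \min(\arg\min_{y\in Y} q_k(t,y))$ yields pointwise convergence $f_k(t)\to f(t)$ for every $t\in S$, hence for almost every $t\in[0,T]$ by the hypothesis that $[0,T]\setminus S$ is Lebesgue-null. Since $Y$ is bounded in the ambient setting ($Y\subseteq X$ with $X$ compact), the $f_k$ are uniformly bounded, so the Lebesgue dominated convergence theorem upgrades the a.e.\ pointwise convergence to $\|f_k-f\|_{L^1([0,T])}\to 0$. The hard part will be the joint calibration of the truncation degree $k$, the Tikhonov regularization $\beta_k$, and the approximation parameter $\varepsilon_k$ governing the off-graph bump: the bump $p_\varepsilon$ must separate $(t,y)$ from a one-dimensional graph while keeping $\|p_\varepsilon\|_{\ell^2}$ under sufficient control that $\beta_k\|p_\varepsilon\|^2\to 0$, and the Markov-inequality step must be made quantitative at continuity points that may lie arbitrarily close to discontinuities of $f$.
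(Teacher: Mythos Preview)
The paper does not prove this theorem at all: it merely states it and attributes it to \cite[Theorem 1]{MARX21}. So there is no ``paper's own proof'' to compare your proposal against; what you have written is an attempt to reconstruct the argument of the cited reference.

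As a sketch of that argument, your outline has the right skeleton --- the variational (Christoffel-function) characterization of $q_k$, the on-graph/off-graph dichotomy, and the dominated-convergence upgrade to $L^1$ convergence --- and you are candid that the quantitative calibration is where the work lies. One technical point deserves flagging: your on-graph step invokes Markov's inequality to say that any feasible $p$ with $p(t,f(t))=1$ satisfies $|p(s,f(s))-1|\leq 1/2$ on an interval of length $\gtrsim 1/k^2$. Markov's inequality bounds $\|p'\|_\infty$ by $Ck^2\|p\|_\infty$, so to extract a uniform local lower bound on $\int p(s,f(s))^2\,ds$ you first need an \emph{a priori} bound on $\|p\|_\infty$ over the competitors, which is not available directly (the constraint $p(t,f(t))=1$ pins only one value). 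The actual arguments in the Christoffel--Darboux literature handle the on-graph bound differently, typically via an explicit needle/bump polynomial construction rather than a uniform Markov step over all competitors; the off-graph separation likewise requires a careful construction (e.g.\ iterated Chebyshev-type polynomials) whose $\ell^2$ norm grows slowly enough that $\beta_k=2^{3-\sqrt{k}}$ absorbs it. Your proposal correctly identifies these as the hard steps but does not yet close them.
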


In the case where there is a unique solution to Problem~\eqref{optimalcontrol}, this method allows us to rebuild (at least approximately) the optimal trajectory. 
Indeed, in the latter case, Proposition~\ref{CONV_MOMENTUM} allows to recover trajectories thanks to the knowledge of the approximate moment matrix of the corresponding occupation measure, in the sense of Definition~\ref{OCCUP_MEAS}.
However, the reconstruction is much trickier when uniqueness fails. In this case, by linearity of Problem~\eqref{opt:relaxed}, the moments which are obtained via moment relaxations may correspond to approximations of the moments of any superposition of measures belonging to the set $\mathcal{M}$.

\begin{remark}
Non-uniqueness is a natural consequence of the existence of symmetry for Problem~\eqref{optimalcontrol}, when there exists $g\in G$ such that $\tau(g)\neq \text{Id}_{\R^m}$, where $\tau$ is as in Definition~\ref{DEF_SYM}. In this case, if $(x(t),u(t))$ steers a $G$-invariant element $x_0\in X$ towards a $G$-invariant set $K$, then $(g(x(t)),\tau(g)(u(t)))$ satisfies the same property, for every $g\in G$.
\end{remark}

\subsection{Catching extreme points}\label{CATCHMEIFYOUCAN}
In this section, we propose a method that allows approximating the moments of an occupation measure solution of Problem~\eqref{opt:relaxed}, without exploiting symmetries.
Our method, based on the resolution of two SDPs, ensures convergence towards the moments of the extreme points of the optimal measure solutions $\mathcal{M}$, which correspond to occupation measures by Corollary~\ref{EXTR_POINT}.

The two steps are:
\begin{enumerate}
    \item Compute the truncated optimal cost $\rho_k$ via the resolution of the SDP~\eqref{sdp_dense};
    \item Solve the SDP~\eqref{sdp_selection1} defined below in order to obtain an approximation of the moments of a pair of occupation measures in $\mathcal{M}$.
\end{enumerate}


\begin{assumption}\label{UNIK}
Consider two polynomials $P\in \R[t,x,u]$ and $\tilde{P} \in \R[x]$ such that
\begin{equation}
\inf_{(\mu,\mu_T)\in \mathcal{M}}  \int_{[0,T] \times X \times U} P(t,x,u) d\mu(t,x,u)+\int_{K} \tilde{P}(x) d\mu_T(x)
\label{PB_LINEAR_FONC}
\end{equation}
 has a unique solution $(\mu^\star,\mu_T^\star)\in \mathscr{M}_+([0,T]\times X \times U)\times \mathscr{M}_+(K)$. 
\end{assumption} 
Notice that  the optimization in (\ref{PB_LINEAR_FONC}) takes place over $\mathcal{M}$, i.e., the set of optimal solutions to~(\ref{opt:relaxed}).

\begin{remark}\label{ERIKGEN}
The fact that the measures in $\mathscr{M}_+([0,T] \times X\times U)$ and $\mathscr{M}_+(K)$ are moment determinate guarantees the existence of $P$ and $\tilde{P}$. 
Moreover,
the genericity of the uniqueness property of Assumption~\eqref{UNIK} w.r.t. polynomials $P$ and $\tilde{P}$ of large enough degree can be obtained by a direct application of~\cite[Theorem 5]{BERNARD}. 
\end{remark}
Define $k_1:=\text{max}(k_0,\text{deg}(P),\text{deg}(\tilde{P}))$.
For $k\geq k_1$, let $\rho_k$ be the truncated optimal cost of~\eqref{sdp_dense}, and consider the following SDP, denoted~\eqref{sdp_selection1}:
\begin{equation}
\label{sdp_selection1}
\tag{$Z_k$}
 \begin{aligned}
\inf_{y,z} \quad & L_z(P)+L_y(\tilde{P}),\\
\textrm{s.t.} \quad & L_z(h)+L_y(H) \leq \rho_k,\\
&M_k(y),M_k(z)\succeq 0,\\
&M_{k-\lceil\text{deg}(v_j)/2\rceil}(v_j z(x))\succeq 0,\\
&M_{k-\lceil\text{deg}(\theta_j)/2\rceil}(\theta_j y)\succeq 0,\\
&M_{k-\lceil\text{deg}(w_j)/2\rceil}(w_j z(u))\succeq 0,\\
&M_{k-1}(t(1-t) z(t))\succeq 0,\\
&L_y(\phi)-L_z\left(\partial \phi/ \partial t+\langle \nabla_x \phi, f \rangle \right)=\phi(0,x_0), \ \forall \phi=(t^s x^\alpha) \in \R[t,x] \ \text{s.t.} \ s+|\alpha|\leq 2k+1-\text{deg}(f).
\end{aligned}
\end{equation}

We have the following:
\begin{proposition}\label{CATCH}
\begin{itemize}
\item Under Assumption~\ref{UNIK}, the optimal pair $(\mu^\star,\mu_T^\star)$ of Problem~\eqref{PB_LINEAR_FONC} is a pair of occupation measures, in the sense of Definition~\ref{OCCUP_MEAS}.
\item For any sequences $(z_{s\alpha \beta}^k)_{k\geq k_1}$ and $(y_{\alpha}^k)_{k\geq k_1}$ optimal for~\eqref{sdp_selection1}, we have, for every $s\in \N$, $\alpha\in \N^n$ and  $\beta\in\N^m$, $z_{s\alpha\beta}^k \to \int_{[0,T]\times X \times U} t^{s}x^{\alpha}u^{\beta} d\mu^\star(t,x,u)$, $y_{\alpha}^k \to \int_{K} x^{\alpha} d\mu_T^\star(x)$ when $k\to \infty$. 
\end{itemize}
\end{proposition}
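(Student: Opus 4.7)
The plan proceeds in two stages matching the two bullets of the statement.

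For the first bullet, the idea is that a linear functional on a convex compact set attains its infimum at extreme points, and uniqueness of the minimizer forces it to be extreme. Concretely, suppose for contradiction that the unique optimizer $(\mu^\star,\mu_T^\star)$ of~\eqref{PB_LINEAR_FONC} is not an extreme point of $\mathcal{M}$. Then there exist $(\nu_1,\nu_{T,1}),(\nu_2,\nu_{T,2})\in\mathcal{M}$ distinct from $(\mu^\star,\mu_T^\star)$ and $\theta\in(0,1)$ with $(\mu^\star,\mu_T^\star)=\theta(\nu_1,\nu_{T,1})+(1-\theta)(\nu_2,\nu_{T,2})$. By linearity of the objective in~\eqref{PB_LINEAR_FONC} both $(\nu_i,\nu_{T,i})$ would also attain the infimum, contradicting uniqueness. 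Hence $(\mu^\star,\mu_T^\star)$ is extreme in $\mathcal{M}$, and Corollary~\ref{EXTR_POINT} identifies it as an occupation measure.

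For the second bullet, the plan is the standard moment-SOS limiting argument tailored to the two-level structure of~\eqref{sdp_selection1}. First I would observe that~\eqref{sdp_selection1} is feasible for all $k\geq k_1$: since $(\rho_k)$ is non-decreasing and converges to $\rho^\star$ by Proposition~\ref{CONV_MOMENTUM}, we have $\rho^\star\leq\rho_k$, so the moments of any $(\mu,\mu_T)\in\mathcal M$, in particular those of $(\mu^\star,\mu_T^\star)$, form a feasible point of~\eqref{sdp_selection1} with objective equal to the optimal value of~\eqref{PB_LINEAR_FONC}. Second, because $X$, $U$, $K$ are compact and $T=1$, any sequence $(y^k,z^k)$ optimal for~\eqref{sdp_selection1} has uniformly bounded entries along each fixed multi-index. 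A diagonal extraction therefore yields a subsequence with $z^k_{s\alpha\beta}\to z^\infty_{s\alpha\beta}$ and $y^k_\alpha\to y^\infty_\alpha$ for every $(s,\alpha,\beta)$.

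Third, the PSD and localizing constraints pass to the limit, so Putinar's Theorem~\ref{PUTINAR_THM} (valid under Assumption~\ref{ass:condition}) produces measures $(\mu^\infty,\mu_T^\infty)\in\mathscr{M}_+([0,T]\times X\times U)\times\mathscr{M}_+(K)$ whose moments are exactly $(z^\infty,y^\infty)$. The Liouville equality constraints, being finitely many polynomial identities for each fixed $k$, hold on every test monomial in the limit, so $(\mu^\infty,\mu_T^\infty)$ is feasible for~\eqref{opt:relaxed}. Passing $L_{z^k}(h)+L_{y^k}(H)\leq\rho_k$ to the limit combined with $\rho_k\to\rho^\star$ gives $\int h\,d\mu^\infty+\int H\,d\mu_T^\infty\leq\rho^\star$, so $(\mu^\infty,\mu_T^\infty)\in\mathcal{M}$. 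Finally, the objective value $L_{z^k}(P)+L_{y^k}(\tilde P)$ converges to $\int P\,d\mu^\infty+\int\tilde P\,d\mu_T^\infty$, and by the feasibility observation above this limit is bounded by $\int P\,d\mu^\star+\int\tilde P\,d\mu_T^\star$, the optimum of~\eqref{PB_LINEAR_FONC}. Thus $(\mu^\infty,\mu_T^\infty)$ itself solves~\eqref{PB_LINEAR_FONC}, and Assumption~\ref{UNIK} forces $(\mu^\infty,\mu_T^\infty)=(\mu^\star,\mu_T^\star)$. Because every subsequential limit coincides with the same measure, the full sequences $(z^k,y^k)$ converge coordinate-wise to the moments of $(\mu^\star,\mu_T^\star)$. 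The main subtlety is the relaxation of the optimality constraint from the equality $L_z(h)+L_y(H)=\rho^\star$ (which one cannot write in a finite SDP since only $\rho_k$ is accessible) to the inequality $\leq\rho_k$; the monotonicity $\rho_k\nearrow\rho^\star$ from Proposition~\ref{CONV_MOMENTUM} is exactly what rescues both feasibility at every finite level and the correct limiting inclusion in $\mathcal{M}$.
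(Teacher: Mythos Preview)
Your argument for the first bullet is correct and essentially identical to the paper's: the paper invokes a general extreme-point result (a linear functional on a compact convex set attains its minimum on the extreme points), while you give the elementary convex-combination version of the same fact.

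For the second bullet your overall outline---extract a convergent subsequence, pass the PSD/localizing/Liouville constraints to the limit, apply Putinar to obtain $(\mu^\infty,\mu_T^\infty)\in\mathcal M$, then show optimality for~\eqref{PB_LINEAR_FONC} and invoke uniqueness---matches the paper's strategy. However, there is a concrete error: you assert $\rho^\star\le\rho_k$, but in fact $\rho_k\le\rho^\star$ for every $k$. The hierarchy~\eqref{sdp_dense} is an \emph{outer} relaxation of the measure LP, so its infimum lies below $\rho^\star$; Proposition~\ref{CONV_MOMENTUM} says precisely that the nondecreasing sequence $\rho_k$ converges \emph{upward} to $\rho^\star$. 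Consequently the moments of $(\mu^\star,\mu_T^\star)$ are in general \emph{not} feasible for~\eqref{sdp_selection1}, since they give $L_z(h)+L_y(H)=\rho^\star\ge\rho_k$, violating the constraint $\le\rho_k$. This destroys your upper bound $\mathrm{opt}(Z_k)\le\int P\,d\mu^\star+\int\tilde P\,d\mu_T^\star$, which is exactly the ingredient you use to conclude optimality of $(\mu^\infty,\mu_T^\infty)$ for~\eqref{PB_LINEAR_FONC}.

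The paper circumvents this by introducing an auxiliary hierarchy $(\bar Z_k)$ identical to~\eqref{sdp_selection1} but with the constraint $L_z(h)+L_y(H)\le\rho^\star$ in place of $\le\rho_k$. Because $\rho_k\le\rho^\star$, every feasible point of $(Z_k)$ is feasible for $(\bar Z_k)$; in particular the limit $(z_\infty,y_\infty)$ of optimizers of $(Z_k)$ is feasible for $(\bar Z_k)$ for every $k$, which is what gives $(\mu^\infty,\mu_T^\infty)\in\mathcal M$ via Putinar. Crucially, the moments of $(\mu^\star,\mu_T^\star)$ \emph{are} feasible for $(\bar Z_k)$, and the optimality of the limit for~\eqref{PB_LINEAR_FONC} is then argued through the interplay between $(Z_k)$ and $(\bar Z_k)$. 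Your remaining steps---in particular passing $L_{z^k}(h)+L_{y^k}(H)\le\rho_k\to\rho^\star$ to the limit to obtain membership in $\mathcal M$, and the final uniqueness conclusion---are correct; you identified the right subtlety but resolved it with the inequality reversed.
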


\begin{proof}
In order to prove the first claim, as $\mathcal{M}$ is weak-$\star$ compact by Proposition~\ref{EXTREME} and the map \[\mathcal{M}\ni(\mu,\mu_T) \mapsto \left(\int_{[0,T] \times X \times U} P(t,x,u) d\mu(t,x,u),\int_{K} \tilde{P}(x) d\mu_T(x)\right)\] is continuous, it is a direct consequence of~\cite[Corollary 4.2]{barvinok2002course} that the solution of Problem~\eqref{PB_LINEAR_FONC} is reached at an extreme point of $\mathcal{M}$. The uniqueness assumption of the solution of~\eqref{PB_LINEAR_FONC} implies that it is reached at an extreme point of $\mathcal{M}$, and only at this point. Using Proposition~\ref{EXTREME}, we obtain that the solution of Problem~\eqref{PB_LINEAR_FONC} is reached at a pair of occupation measures, and the first claim of the proof is proved.

The proof of the second claim is adapted from the proof of~\cite[Theorem 8]{TACCHI22}, with a slight modification provided that the value of $\rho_k$ depends on $k\geq k_0$. It relies on the simple fact that the condition $(\mu,\mu_T)\in \mathcal{M}$ is equivalent to the fact that $(\mu,\mu_T)$ is solution of Equation~\eqref{Liouville} and  $\langle h,\mu \rangle +\langle H,\mu_T \rangle \leq \rho^\star$, where $\rho^\star$ is the optimal cost of Problem~\eqref{opt:relaxed}.
Consider the sequence of SDP relaxations, denoted by~\eqref{sdp_selection}, defined for $k\geq k_1$ as
\begin{equation}
\label{sdp_selection}
\tag{$\bar{Z}_k$}
 \begin{aligned}
\inf_{y,z} \quad & L_z(P)+L_y(\tilde{P}),\\
\textrm{s.t.} \quad & L_z(h)+L_y(H) \leq \rho^\star,\\
&M_k(y),M_k(z)\succeq 0,\\
&M_{k-\lceil\text{deg}(v_j)/2\rceil}(v_j z(x))\succeq 0,\\
&M_{k-\lceil\text{deg}(\theta_j)/2\rceil}(\theta_j y)\succeq 0,\\
&M_{k-\lceil\text{deg}(w_j)/2\rceil}(w_j z(u))\succeq 0,\\
&M_{k-1}(t(1-t) z(t))\succeq 0,\\
&L_y(\phi)-L_z\left(\partial \phi/ \partial t+\langle \nabla_x \phi, f \rangle \right)=\phi(0,x_0), \ \forall \phi=(t^s x^\alpha) \in \R[t,x] \ \text{s.t.} \ s+|\alpha|\leq 2k+1-\text{deg}(f).
\end{aligned}
\end{equation}
 

By \cite[Proposition 7]{TACCHI22}, the SDP~\eqref{sdp_selection1} admits a pair of minimizing sequences $(z_k,y_k)_{k\geq k_1}$. For $\gamma=(s,\alpha,\beta)\in \N^{1+n+m}$, define the sequence $(\hat{z}^k)_{k\geq k_1}$ by $\hat{z}_{\gamma}^k=z_{\gamma}^k$ for $|\gamma|\leq 2k$, and $\hat{z}_{\gamma}^k=0$ if $|\gamma|>2k$. Define similarly $(\hat{y}^k)_{k\geq k_1}$ by $\hat{y}_{\alpha}^k=y_{\alpha}^k$ for $|\alpha|\leq 2k$, and $\hat{y}_{\alpha}^k=0$ if $|\alpha|>2k$.
By \cite[Lemma 5]{TACCHI22}, we obtain that $(\hat{z}^k)_{k\geq k_1}$ is uniformly bounded in $l^{\infty}(\N^{1+n+m})$ and $(\hat{y}^k)_{k\geq k_1}$ is uniformly bounded in $l^{\infty}(\N^{n})$, so that they admit weak-$\star$ converging subsequences. Denote the corresponding limits by $z_{\gamma,\infty}$ for every $\gamma \in \N^{1+n+m}$ and $y_{\alpha,\infty}$ for every $\alpha \in \N^{n}$.
As $\rho_{k}\to \rho^\star$ when $k\to +\infty$, we can show 
that the pair $\left((z_{\gamma,\infty})_{\gamma \in \N^{1+n+m}},(y_{\alpha,\infty})_{\alpha\in \N^n}\right)$  is feasible for the SDP~\eqref{sdp_selection}, for every $k\geq k_1$. 
Thanks to Theorem~\ref{PUTINAR_THM}, we obtain that $(z_{\gamma,\infty})_{\gamma\in \N^{1+n+m}}$ and $(y_{\alpha,\infty})_{\alpha\in \N^n}$ are the moment sequences of a pair of measures $(\mu^{\infty}, \mu_{T}^\infty) \in \mathscr{M}_+([0,T]\times X \times U)\times \mathscr{M}_+(K)$ that is feasible for Problem~\eqref{PB_LINEAR_FONC}. Moreover, by construction of the moment relaxations~\eqref{sdp_selection1} and~\eqref{sdp_selection}, we can check that $(\mu^{\infty},\mu_{T}^\infty)$ is optimal for Problem~\eqref{PB_LINEAR_FONC}, so that we can deduce by uniqueness that $(\mu^{\infty},\mu_{T}^\infty)=(\mu^\star,\mu_T^\star)$.
It follows that the sequences $(\hat{z}^k)_{k\geq k_1}$ and $(\hat{y}^k)_{k\geq k_1}$ are bounded sequences having unique weak-$\star$ accumulation points, and the result is proved.
\end{proof}

\begin{remark}
The SDP~\eqref{sdp_selection1} corresponds to a minimization over the outer approximation of the optimal set of Problem~\eqref{opt:relaxed}, computed previously with SDP~\eqref{sdp_dense}. In accordance with Remark~\ref{ERIKGEN}, we will consider $k\geq k_1$ large enough and random polynomials $P$ and $\tilde{P}$ of degree smaller than $k$ for its practical implementations.
\end{remark}

\section{Recovery of optimal trajectories with symmetry reduction~\label{RECOV} when the optimal invariant measure is unique~\label{uniqueness_recovery}}



Now we present a method to recover trajectories in the case where Problem~\eqref{optimalcontrol} is $G$-invariant, assuming uniqueness of the solution of Problem~\eqref{opt:relaxed_bis}, i.e., that the set $\mathcal{M}^G$ is a singleton. In this case, one can apply Proposition~\ref{CONV_MOM} in order to approximate the moments of the unique measure in $\mathcal{M}^G$. It is then an important task to propose a recovery method of the occupation measures thanks to the latter moments. 
Our method uses invariant polynomials and brings about a significant reduction in the computational cost compared to the method of Section~\ref{LINEAR_FUNK}.
The results of this section will be combined with those of Section~\ref{LINEAR_FUNK} in order to tackle the general case in Section~\ref{SELECTO}.

\subsection{Algorithm}
\textbf{ALGORITHM $A_1$}
\begin{enumerate}
\item Compute a lower approximation $\rho_k^G$ of the optimal cost via~\eqref{sdp_sym_2}, and obtain an approximation of the moments of the unique $G$-invariant pair of measures $(\mu^\star,\mu_T^\star)\in \mathcal{M}^G$;
\item Recover the trajectory and control associated with a pair of occupation measures $(\mu,\mu_T)\in \mathcal{M}$ which is an extreme point of the set $\mathcal{M}$, by solving $(P_1)$ or $(P_2)$, detailed in what follows.
\end{enumerate}

The first step has already been detailed in Section~\ref{HYERES_ARCHI} and consists in computing $\rho_k^G$.
Now we describe the second step, i.e., how do invariant polynomials allow us to recover the moments of optimal trajectories.

\subsection{Invariant polynomials and constancy results~\label{INV_POLY_USE}}

We start by giving a basic definition of invariants adapted to our setting.
 \begin{definition}\label{INVARIANT_POLY}
 \begin{itemize}
     \item  We say that a polynomial $Q\in \R[x,u]$ is $G$-invariant if it belongs to the set
 \[\R[x,u]^G=\{Q\in \R[x,u] \mid \ \forall g\in G, \ Q(g(x),\tau(g)(u))=Q(x,u)\}.\] Define similarly $\R[x]^G=\{P\in \R[x] \mid \ \forall g\in G, \ P(g(x))=P(x)\}$ and $\R[u]^G=\{V\in \R[x] \mid \ \forall g\in G, \ P(\tau(g)(u))=V(u)\}$. 
\item We say that a polynomial $Q\in \R[x,u]$ is $G$-state/control invariant if $Q$ can be decomposed as $Q(x,u)=P(x)V(u)$, where $P(g(x))=P(x)$ and $V(\tau(g)(u))=V(u)$, for every $g\in G$.
 \end{itemize}
 \end{definition}
Next proposition states that we can express some important invariant quantities over the optimal set $\mathcal{M}$ for Problem~\eqref{opt:relaxed}.

\begin{proposition}\label{INV_MOM_GEN}
Assume that $\mathcal{M}^G$ is a singleton $(\mu^\star,\mu_T^\star)$, and let $Q\in \R[x,u]^G$ and $P\in \R[x]^G$. Then the mapping \[\mathscr{M}_+([0,T] \times X \times U)\times \mathscr{M}_+(K) \ni (\mu,\mu_T) \mapsto \left(\int_{[0,T]\times X \times U} t^s Q(x,u)d\mu(t,x,u), \int_{K} P(x) d\mu_T(x)\right)\] is constant on $\mathcal{M}$.
\begin{proof}
Assume by contradiction that there exist distinct pairs $(\mu_1,\mu_{T1})$ and $(\mu_2,\mu_{T2})$ in $\mathcal{M}^2$ such that \begin{equation}
\label{INEQ}
\int_{[0,T]\times X \times U} t^s Q(x,u) d\mu_1(t,x,u)\neq \int_{[0,T]\times X \times U} t^s Q(x,u) d\mu_2(t,x,u)
\end{equation}
for some $s\in \N$.

By $G$-invariance of $Q$, we have that for every measure $\mu \in \mathscr{M}_+([0,T]\times X \times U)$ and $g\in G$, \[\int_{[0,T]\times X \times U} t^s Q(x,u) dg_{\#}\mu(t,x,u)= \int_{[0,T]\times X \times U} t^s Q(x,u) d\mu(t,x,u),\] and hence \[\int_{[0,T]\times X \times U} t^s Q(x,u) dR(\mu_q)(t,x,u)= \int_{[0,T]\times X \times U} t^s Q(x,u) d\mu_q(t,x,u),\] for $q\in \{1,2\}$.
It follows from~\eqref{INEQ} that the two $G$-invariant measures $R(\mu_1)$ and $R(\mu_2)$ are distinct, which is a contradiction provided that $\mathcal{M}^G$ is a singleton.
The same argument holds for the measures $\mu_{T1}$ and $\mu_{T2}$ in $\mathscr{M}_+(K)$, by considering the integrals $\int_{K} P(x) d\mu_{T1}(x)$ and  $\int_{K} P(x) d\mu_{T2}(x)$ for $P\in \R[x]^G$, and we get the result.

\end{proof}
\end{proposition}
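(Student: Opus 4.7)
The plan is to prove the claim directly, bypassing the contradiction route, by exploiting the fact that $G$-invariant integrands are insensitive to the Reynolds averaging and that the uniqueness hypothesis pins down this average.

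First I would establish the identity $\int t^s Q(x,u)\,d\mu = \int t^s Q(x,u)\,dR(\mu)$ for every $\mu \in \mathscr{M}_+([0,T]\times X \times U)$. By the change-of-variables defining the pushforward in Definition~\ref{MEAS_DEFI} (noting that $g$ does not act on the time coordinate),
\[
\int t^s Q(x,u)\, d(g_\#\mu)(t,x,u) = \int t^s Q(g(x),\tau(g)(u))\, d\mu(t,x,u) = \int t^s Q(x,u)\, d\mu(t,x,u),
\]
where the second equality uses $Q \in \R[x,u]^G$. Averaging this identity over $g \in G$ gives the desired invariance of the integral under $R$.

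Next I would appeal to Proposition~\ref{INVARIANCE}, which asserts that $R(\mu) \in \mathcal{M}^G$ for every $\mu \in \mathcal{M}$. Combined with the standing hypothesis $\mathcal{M}^G = \{(\mu^\star,\mu_T^\star)\}$, this forces $R(\mu) = \mu^\star$ uniformly across $\mu \in \mathcal{M}$. Chaining with the previous identity yields $\int t^s Q\,d\mu = \int t^s Q\,d\mu^\star$, so the first coordinate of the mapping is constant on $\mathcal{M}$. The argument for the second coordinate is identical, with $(R, g_\#\mu,Q)$ replaced by $(R_T, g_\#\mu_T, P)$ and $P \in \R[x]^G$ in place of $Q \in \R[x,u]^G$.

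I anticipate no serious obstacle: once the insensitivity of invariant integrands to the pushforward is written down, the uniqueness hypothesis does all the work. The only point deserving some care is the justification that $R(\mu) \in \mathcal{M}^G$ (rather than merely being feasible for~\eqref{opt:relaxed_bis}), which combines the $G$-invariance of the Liouville equation from Lemma~\ref{Liouville_invariant} with linearity of the cost functional in $(\mu,\mu_T)$; this is already packaged into Proposition~\ref{INVARIANCE}, so I would simply cite it.
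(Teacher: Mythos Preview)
Your proof is correct and follows essentially the same approach as the paper's: both hinge on the identity $\int t^s Q\,d\mu = \int t^s Q\,dR(\mu)$ obtained from $G$-invariance of $Q$, together with the fact that $(R(\mu),R_T(\mu_T))\in\mathcal{M}^G$ for any $(\mu,\mu_T)\in\mathcal{M}$. The only cosmetic difference is that the paper packages this as a contradiction argument whereas you proceed directly; your explicit citation of Proposition~\ref{INVARIANCE} for the membership $R(\mu)\in\mathcal{M}^G$ is in fact slightly more careful than the paper's implicit use of it.
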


\begin{corollary}\label{INV_MOM}
Assume that $\mathcal{M}^G$ is a singleton $(\mu^\star,\mu_T^\star)$, and let $Q(x,u)=P(x)V(u)\in \R[x,u]^G$ be $G$-state/control invariant. Then the mapping \[\mathscr{M}_+([0,T] \times X \times U)\times \mathscr{M}_+(K) \ni (\mu,\mu_T) \mapsto \left(\int_{[0,T]\times X \times U} t^s P(x) V(u) d\mu(t,x,u), \int_{K} P(x) d\mu_T(x)\right)\] is constant on $\mathcal{M}$, for every $s\in \N$.
In particular, if $(x(t),u(t))$ is a solution of Problem~\eqref{optimalcontrol}, then the moments of the measure $d\nu(t,x,u)=\delta_{P(x(t))}(dx)\delta_{V(u(t))}(du)dt$ defined on the set $[0,T]\times P(X)\times V(U)\subset [0,T]\times \R^2$ are given by \[\int_{[0,T]\times X \times U} t^s P(x)^\alpha V(u)^\beta d\mu^*(t,x,u),\] and those of the measure $d\nu_T(t,x,u)=\delta_{P(x(T))}(dx)$ defined on $P(K)\subset \R$ are given by $\int_{K} P(x)^\alpha d\mu_T^*(x)$, for every $\alpha,\beta,s\in\N$.
\end{corollary}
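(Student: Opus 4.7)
The plan is to deduce the corollary in two steps from Proposition~\ref{INV_MOM_GEN}, the first step being essentially a specialization of that proposition and the second step being a routine change of variables that reduces moments of $\nu$ and $\nu_T$ to moments of the $G$-invariant measures $\mu^\star$ and $\mu_T^\star$.

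First, I would observe that if $Q(x,u) = P(x)V(u)$ is $G$-state/control invariant in the sense of Definition~\ref{INVARIANT_POLY}, then for every $g \in G$ one has $Q(g(x),\tau(g)(u)) = P(g(x))V(\tau(g)(u)) = P(x)V(u) = Q(x,u)$, so $Q \in \R[x,u]^G$. Likewise $P \in \R[x]^G$ by assumption. Proposition~\ref{INV_MOM_GEN} then applies directly and yields the constancy of the map $(\mu,\mu_T) \mapsto \bigl(\int t^s P(x)V(u)\,d\mu, \int P(x)\,d\mu_T\bigr)$ on $\mathcal{M}$, establishing the first claim.

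For the second claim, let $(x(t),u(t))$ be a solution of Problem~\eqref{optimalcontrol} and let $\mu$ be its associated occupation measure on $[0,T] \times X \times U$; by Proposition~\ref{EXTREME} (applied to the optimal solution), $\mu \in \mathcal{M}$, and similarly $\mu_T = \delta_{x(T)} \in \mathcal{M}$ in the terminal component. For any $\alpha, \beta, s \in \N$, the polynomials $P^\alpha \in \R[x]^G$ and $V^\beta \in \R[u]^G$ are themselves $G$-state/control invariant, since $P(g(x))^\alpha = P(x)^\alpha$ and $V(\tau(g)(u))^\beta = V(u)^\beta$. The pushforward definition of $\nu$ and the standard change-of-variables formula then give
\begin{equation*}
\int_{[0,T] \times P(X) \times V(U)} t^s y^\alpha v^\beta \,d\nu(t,y,v) = \int_0^T t^s P(x(t))^\alpha V(u(t))^\beta \,dt = \int_{[0,T]\times X \times U} t^s P(x)^\alpha V(u)^\beta \,d\mu(t,x,u),
\end{equation*}
and applying the constancy statement from the first claim with $P^\alpha V^\beta$ in place of $PV$ shows this equals $\int t^s P(x)^\alpha V(u)^\beta \,d\mu^\star(t,x,u)$. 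The analogous argument for $\nu_T$ uses the invariant polynomial $P^\alpha \in \R[x]^G$ and the constancy on the terminal component to conclude $\int x^\alpha \,d\nu_T = \int P(x)^\alpha \,d\mu_T^\star$.

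There is essentially no hard step here; the content is that the invariance of $P$ and $V$ propagates to arbitrary powers, so that the constancy result of Proposition~\ref{INV_MOM_GEN} can be leveraged along a whole family of test functions, and that the occupation measure of an optimal pair lives in $\mathcal{M}$ so the preceding constancy applies. The only subtlety worth flagging explicitly in the write-up is that although $\nu$ and $\nu_T$ are pushforwards onto lower-dimensional sets, their moments in the new coordinates are precisely the integrals of the corresponding invariant monomials against $\mu$ and $\mu_T$, so no additional measure-theoretic work is required beyond the change-of-variables identity above.
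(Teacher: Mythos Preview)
Your proposal is correct and follows exactly the route the paper intends: the corollary is stated without proof immediately after Proposition~\ref{INV_MOM_GEN}, and your two-step derivation (specialize the proposition to $Q=PV$, then apply it to the invariant powers $P^\alpha V^\beta$ together with the change-of-variables identity for pushforward measures) is the natural unpacking of that implication. The only minor remark is that your citation of Proposition~\ref{EXTREME} for the fact that the occupation measure of an optimal pair lies in $\mathcal{M}$ is slightly off---what you actually need is that such a pair is feasible for~\eqref{opt:relaxed} and attains cost $\rho=\rho^\star$ (no relaxation gap, Assumption~\ref{ass:nogap}), hence belongs to $\mathcal{M}$; Proposition~\ref{EXTREME} goes in the converse direction.
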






Consider a family of homogeneous generators $(Q_q)_{q\in \{1,\dots,p\}}$ of the invariant ring $\R[x,u]^G$, where $p\geq 1$, i.e., the family $(Q_q)_{q\in \{1,\dots,p\}}$ satisfies $\R[x,u]^G=\R[Q_1,\dots,Q_p]$.
The existence of such generators is guaranteed by Hilbert's finiteness theorem, and Noether's bound theorem (see, for instance~\cite{BOOK_INVARIANT}) ensures that one can bound the degree of the polynomials $(Q_q)_{q\in \{1,\dots,p\}}$ by the order $|G|$ of $G$.

 \paragraph{Main tasks:}
 The problem of recovery of optimal trajectories can then be seen from two different viewpoints, both requiring the use of the invariant polynomials $(Q_q)_q$:
 \begin{itemize}
     \item $(P_1)$: Find an admissible trajectory $(x(t),u(t))$ for Equation~\eqref{optimalcontrol}, assuming the knowledge (at least approximate) of the curves $(Q_q(x(t),u(t)))_{q\in \{1,\dots,p\}}$.
 \item $(P_2)$: Find the moments of a pair $(\mu,\mu_T)\in \mathcal{M}$ defined as occupation measures under the form \[\Bigl(d\mu(t,x,u),d\mu_T(x)\Bigr)=\Bigl(\delta_{x(t)}(dx)\delta_{u(t)}(du)dt,\delta_{x(T)}(dx)\Bigr),\] having an approximate knowledge of those of $(\mu^\star,\mu_T^\star)$, by a similar method of the one described in Section~\ref{CATCHMEIFYOUCAN}.
 \end{itemize}

\subsection{Problem~$(P_1)$}

 For an optimal trajectory $(x(t),u(t))$ of Problem~\eqref{optimalcontrol}, consider the curves \begin{equation}\label{INVERT_LIFT_general}
 z_q(t)=Q_q(x(t),u(t)),
 \end{equation}
  for $q\in \{1,\dots,p\}$, and every $t\in [0,T]$.
By Proposition~\ref{INV_MOM_GEN}, the moment matrices of the measures $d\nu_q(t,z)=\delta_{z_q(t)}(dz)dt$ are independent from the choice of the optimal trajectory $(x(t),u(t))$ and can be computed thanks to the moments of $(\mu^\star,\mu_T^\star)$, obtained solving Problem~\eqref{opt:relaxed_bis} via the resolution of the SDP~\eqref{sdp_sym_2}.
As a direct consequence, the curves $(z_q(t))_q$ can be recovered approximately via the use of Christoffel-Darboux kernels, as defined in Definition~\ref{CRICRI}.

 
\subsubsection{Solve the polynomial system~\eqref{INVERT_LIFT_general} in $(x(t),u(t))$ with state/control separation}
  Due to the fact that controlled vector field $f$ is polynomial and defined on a compact set, we have the existence of a solution $(x(\cdot),u(\cdot))$ of Equation~\eqref{INVERT_LIFT_general} such that the state $x(\cdot)$ is a Lipschitz function. It is then natural separate the state and the control and get rid at first of the control $u(t)$ which is possibly discontinuous (for instance in the case of bang-bang optimal controls). 
Consider a family of homogeneous generators $(P_{q_1},V_{q_2})_{(q_1,q_2)\in \{1,\dots,p_1\}\times \{1,\dots,p_2\}}$ of the invariant rings $\R[x]^G$ and $\R[u]^G$, where $p_1,p_2\geq 1$, so that Equation~\eqref{INVERT_LIFT_general} can be simplified into \begin{equation}\label{INVERT_LIFT}
 (y_{q_1}(t),v_{q_2}(t))=\left(P_{q_1}(x(t)),V_{q_2}(u(t))\right),
 \end{equation}
  for $(q_1,q_2)\in \{1,\dots,p_1\}\times \{1,\dots,p_2\}$, and every $t\in [0,T]$, which is divided into separate equations on the variables $x$ and $u$.
A method to test the dynamical feasibility of a solution of Equation~\eqref{INVERT_LIFT} and, in the positive case, recover the associated optimal control is proposed in Appendix~\ref{CONTROL_RECO}.
  
  \begin{remark}
 By Corollary~\ref{INV_MOM}, the moments of the measure
 \[d\nu_{q_1,q_2}(t,x,u)=\delta_{y_{q_1}(t)}(dx)\delta_{v_{q_2}(t)}(du)dt\] defined on $[0,T]\times \R^2$ are given by \[\int_{[0,T]\times X \times U} t^s P_{q_1}(x)^\alpha V_{q_2}(u)^\beta d\mu^*(t,x,u),\] for every $(q_1,q_2)\in \{1,\dots,p_1\}\times \{1,\dots,p_2\}$, $s,\alpha,\beta \in \N$.
 \end{remark}

\begin{remark}\label{INVSYSSOL}
In the case of sign-symmetries given in Example~\ref{EXX},
the set $\R[x]^G$ is generated by $P(x)=x^2$ and $\R[u]^G$ is generated by $V(u)=u^2$.
Hence solving~\eqref{INVERT_LIFT} in $x$-component is equivalent to taking the square root, up to the singularity at $x=0$. Solving~\eqref{INVERT_LIFT} in the $u$-component may be harder due to possible discontinuities of the control $u(t)$.
\end{remark}

   \begin{remark}
  In this setting, Problem~$(P_1)$ corresponds to a lift over invariant problem (see for instance \cite{PR16,PR21} for a study of regularity issues). The resolution of  Equation~\eqref{INVERT_LIFT} can be made analytically in some easy cases (e.g. sign symmetries). Computer algebra methods can be used in more complex cases, which are, however, beyond the scope of this paper.
  \end{remark}

\subsection{Problem~$(P_2)$~\label{SUPP:TRICK}}

In the case where solving~$(P_1)$ is too hard, 
we propose a numerical method involving a supplementary step, which aims at getting the moments of an extreme point $(\mu,\mu_T)$ of the set $\mathcal{M}$ using the approximate values of the moments of $(\mu^\star,\mu_{T}^\star)$, and the selection of extreme measures proved in Section~\ref{CATCHMEIFYOUCAN} with SDP~\eqref{sdp_selection1}. 
It consists in the minimization of a functional built from polynomials $P\in \R[t,x,u]$ and $\tilde{P}\in \R[x]$ guaranteeing uniqueness of the following LP on measures:
\begin{equation} \label{opt:lift}
\begin{aligned}
 \inf\limits_{\mu,\mu_T} \quad & \int P\,d\mu+\int \tilde{P}d\mu_T\\
\textrm{s.t.} \quad &   (\mu,\mu_T) \text{\;\,satisfy (\ref{Liouville})}\\
  &(\mu,\mu_T) \in \mathcal{A}_{\mu^\star,\mu_T^\star},   \\
\end{aligned}
\end{equation}
where $\mathcal{A}_{\mu^\star,\mu_T^\star}$ is defined as in Definition~\ref{MEAS_DEFI}.

By noticing that $(\mu,\mu_T) \in \mathcal{A}_{\mu^\star,\mu_T^\star}$ if and only if $\int t^s Q(x,u)d\mu=\int t^s Q(x,u)d\mu^\star$ and $\int \tilde{Q}(x)d\mu_T=\int \tilde{Q}(x)d\mu_T^\star$ for every $s\in \N$ and $(P,\tilde{Q})\in \R[x,u]^G\times \R[x]^G$, we can propose the following SDP relaxation sequence, denoted by~\eqref{sdp_lift} and defined, for $k\geq k_1$, as
\begin{equation}
\label{sdp_lift}
\tag{$R_k$}
 \begin{aligned}
\inf_{y,z} \quad & L_z(P)+L_y(\tilde{P})\\
\textrm{s.t.} \quad & L_z(h)+L_y(H)\leq \rho_{k}^G\\
 &M_k(y),M_k(z)\succeq 0,\\
&M_{k-\lceil\text{deg}(v_j)/2\rceil}(v_j z(x))\succeq 0,\\
&M_{k-\lceil\text{deg}(\theta_j)/2\rceil}(\theta_j y)\succeq 0,\\
&M_{k-\lceil\text{deg}(w_j)/2\rceil}(w_j z(u))\succeq 0,\\
&M_{k-1}(t(1-t) z(t))\succeq 0,\\
&L_y(\phi)-L_z\left(\partial \phi/ \partial t+\langle \nabla_x \phi, f \rangle \right)=\phi(0,x_0), \ \forall \phi=(t^s x^\alpha) \in \R[t,x] \ \text{s.t.} \ s+|\alpha|\leq 2k+1-\text{deg}(f),\\
& L_z(t^s P_q(x)^\alpha V_q(u)^\beta)=L_{z^\star_k}(t^s P_q(x)^\alpha V_q(u)^\beta),\\
& L_y(P_q(x)^\alpha )=L_{y^\star_k}(P_q(x)^\alpha),\\
&\text{for every} \ s\in \N, \alpha\in \N, \beta\in\N, q\in \{1,\dots,p\},\\
\end{aligned}
\end{equation}
where $z^\star_k, y^\star_k$ are the optimal solutions of the SDP~\eqref{sdp_sym_2}.

By similar arguments to those used in the proof of Proposition~\ref{CATCH} concerning the convergence of SDP~\eqref{sdp_selection1}, we obtain the following convergence result.
\begin{proposition}
For any sequences $(z_{s\alpha \beta}^k)_{k\geq k_0}$ and $(y_{\alpha}^k)_{k\geq k_0}$ optimal for~\eqref{sdp_lift}, we have, for every $s\in \N$, $\alpha\in \N^n$ and  $\beta\in\N^m$, \[z_{s\alpha\beta}^k \to \int_{[0,T]\times X \times U} t^{s}x^{\alpha}u^{\beta} d\mu(t,x,u),\] and \[y_{\alpha}^k \to \int_{K} x^{\alpha} d\mu_T(x),\] when $k\to \infty$, where $(\mu,\mu_T)$ is a pair of occupation measures solution of~\eqref{opt:relaxed}.
\end{proposition}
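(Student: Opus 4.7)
The plan is to mirror the proof of Proposition~\ref{CATCH}, with the main novelty being that the linear equalities in~\eqref{sdp_lift} couple the decision variables $(y,z)$ to the approximating sequence $(z^\star_k,y^\star_k)$ produced by~\eqref{sdp_sym_2}, so two simultaneous convergences must be propagated to the limit. Implicit in the statement is an analogue of Assumption~\ref{UNIK}: that $(P,\tilde P)$ is chosen so that the LP~\eqref{opt:lift} has a unique optimizer, a property which is generic in sufficiently high degree by~\cite[Theorem~5]{BERNARD} applied to the $G$-invariant slice $\mathcal{A}_{\mu^\star,\mu_T^\star}$.

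First I would establish that~\eqref{sdp_lift} is feasible for every $k\ge k_1$: the moments of $(\mu^\star,\mu_T^\star)$ trivially satisfy the PSD constraints and the Liouville equalities, the cost bound holds by optimality in~\eqref{opt:relaxed_bis}, and the invariant moment matching constraints reduce to $L_{z^\star_k}=L_{z^\star_k}$ and $L_{y^\star_k}=L_{y^\star_k}$. Exploiting compactness of $X,U,K$, \cite[Lemma~5]{TACCHI22} gives that the zero-padded extensions $(\hat z^k,\hat y^k)$ of any optimal sequence of~\eqref{sdp_lift} are uniformly bounded in $\ell^\infty$, so that a subsequence weak-$\star$ converges to some limits $(z^\infty,y^\infty)$.

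Next I would pass each constraint to the limit. The PSD moment and localizing conditions survive componentwise weak-$\star$ convergence, and the Liouville equalities are preserved because they are linear with $k$-independent coefficients (up to the truncation degree). For the new constraints, Proposition~\ref{CONV_MOM} together with $\rho^G=\rho^\star$ gives $\rho_k^G\to\rho^\star$ and $L_{z^\star_k}(t^s P_q^\alpha V_q^\beta)\to\int t^s P_q^\alpha V_q^\beta\,d\mu^\star$, so in the limit $(z^\infty,y^\infty)$ satisfies the cost inequality $L_{z^\infty}(h)+L_{y^\infty}(H)\le\rho^\star$ and matches the invariant moments of $(\mu^\star,\mu_T^\star)$ exactly. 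Applying Theorem~\ref{PUTINAR_THM} yields a representing pair $(\mu^\infty,\mu_T^\infty)\in\mathcal{A}_{\mu^\star,\mu_T^\star}$ satisfying Liouville and hence lying in $\mathcal{M}$; in particular it is feasible for~\eqref{opt:lift}.

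To upgrade feasibility to optimality, observe that for any admissible competitor $(\mu,\mu_T)$ of~\eqref{opt:lift}, its truncated moments are feasible for~\eqref{sdp_lift} at every order $k\ge k_1$, so $L_{z^k}(P)+L_{y^k}(\tilde P)\le\int P\,d\mu+\int\tilde P\,d\mu_T$; passing to the limit gives optimality of $(\mu^\infty,\mu_T^\infty)$ for~\eqref{opt:lift}. Under the uniqueness hypothesis on $(P,\tilde P)$, this limit is the unique optimizer, and by~\cite[Corollary~4.2]{barvinok2002course} combined with Proposition~\ref{EXTREME} and Corollary~\ref{EXTR_POINT}, it is an extreme point of $\mathcal{M}$, i.e.\ a pair of occupation measures. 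Uniqueness of the weak-$\star$ accumulation point then promotes subsequential convergence to convergence of the whole sequence, yielding the claim. The main obstacle is the joint passage to the limit in the invariant moment matching constraints: their right-hand sides themselves move with $k$, so one must carefully interleave the convergence from Proposition~\ref{CONV_MOM} with the weak-$\star$ convergence of $(\hat z^k,\hat y^k)$, controlling the truncation degree in both sequences so that each fixed monomial test is eventually included in both.
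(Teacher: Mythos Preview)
Your approach mirrors the paper's, which simply invokes ``similar arguments to those used in the proof of Proposition~\ref{CATCH}.'' The overall structure you propose---feasibility, compactness via~\cite[Lemma~5]{TACCHI22}, passing each constraint to the limit, Putinar's representation, then optimality plus uniqueness forcing convergence of the full sequence---is exactly the intended scheme.

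There is, however, a genuine gap in your optimality step. You claim that ``for any admissible competitor $(\mu,\mu_T)$ of~\eqref{opt:lift}, its truncated moments are feasible for~\eqref{sdp_lift} at every order $k\ge k_1$,'' but this fails on two counts. First, such a competitor lies in $\mathcal{M}$, so $\int h\,d\mu+\int H\,d\mu_T=\rho^\star$, whereas~\eqref{sdp_lift} demands $\le\rho_k^G$, and $\rho_k^G<\rho^\star$ in general at finite $k$. Second, the invariant-moment matching constraints in~\eqref{sdp_lift} are equalities against $L_{z^\star_k}$ and $L_{y^\star_k}$, which differ from the true invariant moments of $(\mu^\star,\mu_T^\star)$ at finite $k$; a competitor in $\mathcal{A}_{\mu^\star,\mu_T^\star}$ matches the latter, not the former. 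The same issue contaminates your feasibility witness: the moments of $(\mu^\star,\mu_T^\star)$ do \emph{not} satisfy the matching equalities of~\eqref{sdp_lift}; the correct witness is the pair $(z^\star_k,y^\star_k)$ itself, for which those equalities are indeed tautological and whose $G$-invariance makes $M_k^G\succeq 0$ equivalent to the dense $M_k\succeq 0$.

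The repair is precisely the device already used in the proof of Proposition~\ref{CATCH}: introduce an auxiliary hierarchy $(\bar R_k)$ in which $\rho_k^G$ is replaced by $\rho^\star$ and the right-hand sides $L_{z^\star_k},L_{y^\star_k}$ are replaced by the exact invariant moments of $(\mu^\star,\mu_T^\star)$. Competitors of~\eqref{opt:lift} are then genuinely feasible for $(\bar R_k)$ at every order, and the limit $(z^\infty,y^\infty)$ of optimal solutions of~\eqref{sdp_lift} is feasible for every $(\bar R_k)$ because $\rho_k^G\to\rho^\star$ and $L_{z^\star_k}(\cdot)\to\int(\cdot)\,d\mu^\star$ by Proposition~\ref{CONV_MOM}. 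Your closing remark about the ``joint passage to the limit'' correctly identifies this as the crux, but the direct comparison you propose bypasses rather than resolves it.
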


\begin{remark}
Note that even if this method breaks symmetry and does not involve the symmetry reduced moment matrices, the number of SDP variables is significantly reduced because of multiple moment substitutions. The efficiency of~\eqref{sdp_lift} will be illustrated in the numerical results of Section~\ref{APPLIQ}.
\end{remark}

\section{Recovery of trajectories with symmetry reduction in the general case~\label{SELECTO}}

Our next Algorithm~$(A_2)$ allows to tackle the case where the solution of Problem~(\ref{opt:relaxed_bis}) is not unique, i.e., the set $\mathcal{M}^G$ is not a singleton, using symmetry reduction. It requires the supplementary minimization Step~$2$, which relies on the results of Section~\ref{LINEAR_FUNK} and generalizes the hierarchy~\eqref{sdp_selection1} to the symmetric setting.

\textbf{ALGORITHM $A_2$}
\begin{enumerate}
\item Compute a lower approximation $\rho_k^G$ of the optimal cost via~\eqref{sdp_sym_2};
\item Select an extreme pair of $G$-invariant measures $(\mu^\star,\mu_{T}^\star)\in \mathcal{M}^G$, by minimizing a random symmetric linear functional;
\item Apply Step~$2$ of Algorithm~$(A_1)$ (i.e. solve $(P_1)$ or $(P_2)$) in order to recover the trajectory and control associated with a pair of occupation measures $(\mu,\mu_T)\in \mathcal{M}$ belonging to the set $\mathcal{A}_{\mu^\star,\mu_T^\star}$, where $\mathcal{A}_{\mu^\star,\mu_T^\star}$ is defined as in Definition~\ref{MEAS_DEFI}.
\end{enumerate}

The goal of what follows is to describe Step~$2$ of Algorithm~$(A_2)$.
Indeed, the first  step (respectively, third step) has already been described in Section~\ref{HYERES_ARCHI} (respectively, Section~\ref{uniqueness_recovery}).
Contrary to the case described in Section~\ref{RECOV} where $\mathcal{M}^G$ is a singleton, the mapping \[(\mu,\mu_T) \mapsto \left(\int_{[0,T]\times X \times U} t^s P(x)^\alpha V(u)^\beta \mu(t,x,u),\int_{K} P(x)^\alpha \mu_T(x)\right)\] is in general not constant for $(\mu,\mu_T) \in \mathcal{M}$. However it is constant on every $\mathcal{A}_{\mu,\mu_T}$, for $(\mu,\mu_T)\in \mathcal{M}^G$, where $\mathcal{A}_{\mu,\mu_T}$ is as in Definition~\ref{MEAS_DEFI}. The goal is then to recover the value of this mapping on $\mathcal{A}_{\mu^\star,\mu_T^\star}$, where $(\mu^\star,\mu_T^\star)$ is an extreme point of $\mathcal{M}^G$. 

\begin{prop}\label{INV_MOM_BIS}
Let $Q(x,u)=P(x)V(u)\in \R[x,u]^G$ be $G$-state/control invariant, in the sense of Definition~\ref{INVARIANT_POLY}.
 The moments of any measure of the form \[\left(d\nu(t,x,u),d\nu_T(x)\right)=\left(\delta_{P(x(t))}(dx)\delta_{V(u(t))}(du)dt,\delta_{P(x(T))}(dx)\right),\] where $(x(t),u(t))$ are optimal pairs of Problem~\eqref{optimalcontrol} are given by \[\left(\int_{[0,T]\times X \times U} t^s P(x)^\alpha V(u)^\beta d\mu(t,x,u),\int_{K} P(x)^\alpha d\mu_T(t,x,u)  \right)\] where $(\mu,\mu_T)$ is an extreme point of $\mathcal{M}^G$.
\begin{proof}
By Proposition~\ref{INVARIANCE}, the extreme points of $\mathcal{M}^G$ are \[\mathcal{E}_G=\{(R(\mu),R_T(\mu_T)) \mid \ (\mu,\mu_T)\in \mathcal{M} \ \text{is a pair of occupation measures}\}.\]
Consider an extreme point of $\mathcal{M}^G$ which can be written as the image $R(\mu)$ by $R$ of an occupation measure $\mu$. Then we have
\[\begin{aligned}
    \int_{[0,T]\times X \times U} t^s P(x)^\alpha V(u)^\beta dR(\mu)
    =&\frac{1}{|G|} \sum_{g\in G} \int_{[0,T]\times X \times U} t^s P(g(x))^\alpha V(\tau(g)(u))^\beta d\mu\\
    =&\int_{[0,T]\times X \times U} t^s P(x)^\alpha V(u)^\beta d\mu\\
    =&\int_{[0,T]\times X \times U} t^s P(x)^\alpha V(u)^\beta d\nu,  
\end{aligned}\]
where the second equality is obtained by $G$-invariance of $Q=PV$, and the third one by definition of $\nu$. The same argument being true for the measure $\nu_T$ by using $R_T$, the result follows.
\end{proof}

\end{prop}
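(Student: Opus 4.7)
The plan is to combine Proposition~\ref{INVARIANCE}, which characterizes the extreme points of $\mathcal{M}^G$ as images of occupation measures under the Reynolds operator, with the defining $G$-invariance of $P$ and $V$. Given an optimal pair $(x(t),u(t))$ of Problem~\eqref{optimalcontrol}, I would first let $(\mu,\mu_T)$ be the associated pair of occupation measures, which belongs to $\mathcal{M}$ since the pair is optimal, and then set $(\mu^\star,\mu_T^\star):=(R(\mu),R_T(\mu_T))$. By Proposition~\ref{INVARIANCE}, this $(\mu^\star,\mu_T^\star)$ is precisely an extreme point of $\mathcal{M}^G$, so it will suffice to show that the moments of $\nu$ and $\nu_T$ coincide with the appropriate integrals against $\mu^\star$ and $\mu_T^\star$.

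The core computation is to expand
\[
\int_{[0,T]\times X\times U} t^s P(x)^\alpha V(u)^\beta\, d\mu^\star(t,x,u) = \frac{1}{|G|}\sum_{g\in G}\int t^s P(x)^\alpha V(u)^\beta\, dg_{\#}\mu(t,x,u).
\]
Using the change-of-variables formula for the pushforward (with the action $(t,x,u)\mapsto (t,g(x),\tau(g)(u))$ from Definition~\ref{MEAS_DEFI}), each summand equals $\int t^s P(g(x))^\alpha V(\tau(g)(u))^\beta\, d\mu$. Now the $G$-state/control invariance of $Q=PV$ forces $P(g(x))=P(x)$ and $V(\tau(g)(u))=V(u)$ for every $g\in G$, so all $|G|$ summands are equal and the average collapses to $\int t^s P(x)^\alpha V(u)^\beta\, d\mu$. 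Since $(\mu,\mu_T)$ is the occupation measure pair of $(x(\cdot),u(\cdot))$, writing $d\mu = dt\,\delta_{x(t)}(dx)\,\delta_{u(t)}(du)$ gives
\[
\int t^s P(x)^\alpha V(u)^\beta\, d\mu = \int_0^T t^s P(x(t))^\alpha V(u(t))^\beta\, dt,
\]
which is precisely the $(s,\alpha,\beta)$-moment of $\nu$.

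The argument for $\mu_T^\star$ is the same modulo notation: expand $R_T(\mu_T) = \frac{1}{|G|}\sum_g g_{\#}\mu_T$, move $g$ inside by the pushforward formula to produce $P(g(x))^\alpha$, apply $G$-invariance of $P$ to reduce each term to $\int P(x)^\alpha d\mu_T$, and recognize this last integral as the $\alpha$-moment of $\nu_T$ via $d\mu_T=\delta_{x(T)}(dx)$. There is no real obstacle in the argument; the only points requiring care are the direction of the pushforward formula (one needs $\int \varphi\, dg_\#\mu = \int \varphi\circ (\mathrm{id},g,\tau(g))\, d\mu$) and the observation that the terminal measure depends only on the state, so $V$ plays no role on that side.
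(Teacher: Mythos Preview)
Your proof is correct and follows essentially the same route as the paper's: invoke Proposition~\ref{INVARIANCE} to identify the relevant extreme point of $\mathcal{M}^G$ as the Reynolds average of the occupation measure, unfold $R$ via the pushforward formula, collapse the sum using the separate $G$-invariance of $P$ and $V$, and read off the moments of $\nu$ (and analogously for $\nu_T$). If anything, your version is slightly more explicit about the direction of the change-of-variables and about why the separate invariance of $P$ and $V$ is available.
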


\begin{assumption}\label{Gextr}
Consider $G$-invariant polynomials $P\in \R[t,x,u]$ and $\tilde{P} \in \R[x]$ such that
\begin{equation}
\inf_{(\mu,\mu_T)\in \mathcal{M}^G}  \int_{[0,T] \times X \times U} P(t,x,u) d\mu(t,x,u)+\int_{K} \tilde{P}(x) d\mu_T(x)
\label{PB_LINEAR_FONC_INV}
\end{equation}
 has a unique solution $(\mu^\star,\mu_T^\star)\in \mathcal{M}^G$, which is an extreme point of $\mathcal{M}^G$. 
\end{assumption} 

Consider the SDP relaxation sequence, denoted by~\eqref{sdp_selection_sym}, defined for $k\geq k_0$ as
 \begin{equation}
 \label{sdp_selection_sym}
 \tag{$Z_k^G$}
 \begin{aligned}
\inf_{y,z} \quad & L_z(P)+L_y(\tilde{P})\\
\textrm{s.t.} \quad & L_z(h)+L_y(H)\leq \rho_{k}^G\\
&M_k^G(y),M_k^G(z)\succeq 0,\\
&M_{k-\lceil\text{deg}(v_j)/2\rceil}^G(v_j z(x))\succeq 0,\\
&M_{k-\lceil\text{deg}(\theta_j)/2\rceil}^G(\theta_j y)\succeq 0,\\
&M_{k-\lceil\text{deg}(w_j)/2\rceil}^G(w_j z(u))\succeq 0,\\
&M_{k-1}^G(t(1-t) z(t))\succeq 0,\\
&L_y(\phi)-L_z\left(\partial \phi/ \partial t+\langle \nabla_x \phi, f \rangle \right)=\phi(0,x_0), \ \forall \phi=(t^s x^\alpha) \in \R[t,x] \ \text{s.t.} \ s+|\alpha|\leq 2k+1-\text{deg}(f).\\
\end{aligned}
\end{equation}
This SDP is a generalization of SDP~\eqref{sdp_selection1}.
By similar arguments to those used in the proof of Proposition~\ref{CATCH} together with Proposition~\ref{INV_MOM_BIS}, we obtain the following convergence result.
\begin{proposition}
Assume that Assumption~\ref{Gextr} holds. Then for any sequences $(z_{s\alpha \beta}^k)_{k\geq k_0}$ and $(y_{\alpha}^k)_{k\geq k_0}$ optimal for~\eqref{sdp_selection_sym}, we have, for every $s\in \N$, $\alpha\in \N^n$ and  $\beta\in\N^m$, \[z_{s\alpha\beta}^k \to \int_{[0,T]\times X \times U} t^{s}x^{\alpha}u^{\beta} d\mu^\star(t,x,u),\] and \[y_{\alpha}^k \to \int_{K} x^{\alpha} d\mu_T^\star(x),\] when $k\to \infty$.
\end{proposition}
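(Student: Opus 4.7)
The plan is to mirror the proof of Proposition~\ref{CATCH} in the symmetry-reduced setting. First, feasibility of~\eqref{sdp_selection_sym} for every $k \geq k_0$ follows from the feasibility of the symmetry-adapted hierarchy~\eqref{sdp_sym_2}: an optimizer $(y,z) \in \mathcal{K}^G$ of~\eqref{sdp_sym_2} satisfies all reduced PSD constraints, the Liouville conditions, and $L_z(h)+L_y(H) = \rho_k^G$, and is thus feasible for~\eqref{sdp_selection_sym}. Let $(y^k, z^k)_{k \geq k_0}$ be optimal for~\eqref{sdp_selection_sym}, extended by zeros beyond degree $2k$; by~\cite[Lemma~5]{TACCHI22} these are uniformly bounded in $\ell^\infty$, so weak-$\star$ convergent subsequences exist with limits $(y^\infty, z^\infty)$.

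Next I would show that $(y^\infty, z^\infty)$ are the moment sequences of a $G$-invariant pair $(\mu^\infty, \mu_T^\infty) \in \mathcal{M}^G$. The key observation is that, when restricted to $G$-invariant sequences in $\mathcal{K}^G$, the reduced PSD conditions $M_k^G(\cdot) \succeq 0$ are equivalent to the standard conditions $M_k(\cdot) \succeq 0$ via the block-diagonal structure induced by the isotypic decomposition~\eqref{IRRED_DEC} of Section~\ref{SYM_MOM}. Putinar's Positivstellensatz (Theorem~\ref{PUTINAR_THM}) then yields representing measures on $[0,T]\times X\times U$ and $K$; since the sequences lie in $\mathcal{K}^G$ and the moment problem is determinate on these compact supports, the representing measures are themselves $G$-invariant. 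Passing to the limit in the Liouville equalities together with $L_{z^k}(h) + L_{y^k}(H) \leq \rho_k^G \to \rho^\star$ places $(\mu^\infty, \mu_T^\infty)$ inside $\mathcal{M}^G$.

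Finally, to verify optimality for~\eqref{PB_LINEAR_FONC_INV}: since $(\mu^\star, \mu_T^\star) \in \mathcal{M}^G$, its truncated moment sequence is feasible for the auxiliary SDP with cost bound relaxed to $\leq \rho^\star$. A diagonal argument together with $\rho_k^G \to \rho^\star$ yields the upper bound
\[
L_{z^\infty}(P) + L_{y^\infty}(\tilde{P}) \;\leq\; \int P \, d\mu^\star + \int \tilde{P} \, d\mu_T^\star.
\]
Since $(\mu^\infty, \mu_T^\infty)$ is feasible for~\eqref{PB_LINEAR_FONC_INV}, it attains the infimum, and the uniqueness part of Assumption~\ref{Gextr} forces $(\mu^\infty, \mu_T^\infty) = (\mu^\star, \mu_T^\star)$. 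Because every weak-$\star$ accumulation point of the bounded $(y^k, z^k)$ equals the moments of $(\mu^\star, \mu_T^\star)$, subsequential convergence is promoted to convergence of the full sequences.

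The main obstacle is the symmetric incarnation of Putinar's extraction step, i.e.\ asserting that a $G$-invariant pseudo-moment sequence whose symmetry-reduced moment and localizing matrices are PSD admits a $G$-invariant representing measure. This reduces to the block-diagonal equivalence between $M_k^G$ and $M_k$ on $\mathcal{K}^G$ provided by Definition~\ref{RED_MATRIX}, combined with moment determinacy under Assumption~\ref{ass:condition}; Proposition~\ref{INV_MOM_BIS} then guarantees that the extreme points of $\mathcal{M}^G$ isolated by the random invariant functional $(P,\tilde P)$ are correctly identified through their $G$-invariant polynomial moments.
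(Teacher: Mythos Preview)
Your proposal is correct and follows essentially the same route as the paper, which simply states that the result is obtained ``by similar arguments to those used in the proof of Proposition~\ref{CATCH} together with Proposition~\ref{INV_MOM_BIS}.'' You have unpacked precisely that: feasibility from~\eqref{sdp_sym_2}, uniform boundedness via~\cite[Lemma~5]{TACCHI22}, extraction of a representing $G$-invariant pair in $\mathcal{M}^G$ through the equivalence of $M_k^G\succeq 0$ and $M_k\succeq 0$ on $\mathcal{K}^G$ combined with Putinar, and then uniqueness from Assumption~\ref{Gextr} to promote subsequential to full convergence. The only minor remark is that your invocation of Proposition~\ref{INV_MOM_BIS} at the end is not quite its statement; in the paper that proposition serves to interpret the resulting extreme point of $\mathcal{M}^G$ in terms of trajectory moments rather than to carry out the convergence argument itself, which is already complete once uniqueness is applied.
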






\section{Numerical examples}\label{APPLIQ}

We illustrate our methods first with an elementary example of an integrator with symmetries.
Then we tackle the case of qubit inversion, which is the most simple example of quantum systems, and which has attracted a large interest for years~\cite{BM06,gatto}.
All computations shown were run on a personal computer running MACOS with an
11th Gen Intel(R) Core(TM) i7-11800H @ 2.20GHz Processor and 16GB of RAM. 
The software was coded in Matlab utilizing the Gloptipoly3 \cite{henrion2009gloptipoly} library for problem formulation or Yalmip~\cite{YALMIP}, and SeDuMi \cite{sturm1999using} as the SDP solver.\footnote{See the code archive \url{https://github.com/nicolasaugier1/SYMMETRIC_OCP.git} for more details.}
The code~\url{symmetric_OCP_SOS.m} achieves the cost computation with Yalmip modeling and uses the classical dual formulation on polynomials of the Moment hierarchies stated in the present paper (see e.g.~\cite{OCP08,SYMSDP,TACCHI22}), while~\url{symmetric_OCP_qubit.m} achieves the trajectory reconstruction via Algorithm $(A_2)$. 
The use of Yalmip is restricted to the computation of optimal costs, corresponding to the first step of the proposed algorithms solving SDP~\eqref{sdp_sym_2}.
As Gloptipoly modeling is more convenient for dealing with the measure formulation of our optimization problems, we will use it for trajectory recovery. However, as a counterpart we will restrict the numerical simulations concerning trajectory reconstructions to the symmetry reduction~\eqref{sdp_sym_1} described in Appendix~\ref{A22} which considers $G$-invariant moment sequences. It achieves in practice only moment substitutions instead of block diagonal reduction of moment and localizing matrices. 


\subsection{Toy model: integrator with symmetry~\label{INTEGRATOR}}

Consider the following integrator optimal control problem:
\begin{equation}
\tag{INT}\label{INTEG}
\begin{aligned}
     \min \quad & T\\
    \textrm{s.t.}\quad &\dot{x}(t)=u(t)\;\;\text{on}\;\; [0,T]\\
    &|u(t)|\leq 1,\;|x(t)|\leq 1\\
    &x(0)=0,\;x(T)^2=1.
\end{aligned}
\end{equation}
Here we have $X=[-1,1]=\{x\in \R \mid v_1(x)=1-x^2\geq 0\}$, $U=[-1,1]=\{u\in \R \mid w_1(u)=1-u^2\geq 0\}$, $K=\{\pm 1\}=\{x\in \R \mid \theta_1(x)=1-x^2=0\}$, which are compact semi-algebraic sets such that Assumption~\ref{ass:condition} is satisfied. Moreover, one can check easily that Assumption~\ref{ass:nogap} is satisfied.
The group $G=\{-1,1\}$ together with multiplication acts as a sign symmetry with $1(x)=x$, $-1(x)=-x$, $\tau=\text{Id}_{\R}$, in accordance with Definition~\ref{DEF_SYM}. The initial state and final set are $G$-invariant, in the sense of Definition~\ref{group_inv_def}.
By a direct analysis of the dynamics, we show that the optimal cost is $T=1$ and the two optimal trajectories are defined, for every $t\in [0,1]$, by $(x_1(t),u_1(t))=(t,1)$ and $(x_2(t),u_2(t))=(-t,-1)$. Their associated occupation measures are $d\nu_j(t,x,u)=\delta_{x_j(t)}(dx)\delta_{u_j(t)}(du)dt$, for $j\in \{1,2\}$.
By Proposition~\ref{EXTR_POINT}, the set $\mathcal{M}$ of optimal measures for Problem~\eqref{opt:relaxed} is equal to \[\mathcal{M}=\{\left(\lambda_1 d\nu_1(t,x,u)+\lambda_2 d\nu_2(t,x,u),\lambda_1 \delta_1(dx)+\lambda_2 \delta_{-1}(dx)\right) \mid \lambda_1+\lambda_2=1, \ \lambda_1\geq 0, \lambda_2\geq 0\},\] so that the unique $G$-invariant pair of measures in $\mathcal{M}$ is $(\mu^\star,\mu_T^\star)$, where $\mu^\star=\frac{1}{2}(\nu_1+\nu_2)$ and $\mu_T^\star=\frac{1}{2}(\delta_1(dx)+\delta_{-1}(dx))$.

\subsubsection{Cost computation}
On Table~\ref{BETA_1}, we compare the dense~\eqref{sdp_dense} and symmetric~\eqref{sdp_sym_2} relaxations in the case $d=2k\in \{14,16,18,20\}$, where the SDPs are modeled with Yalmip. We notice a better reduction of the computational time due to the block structure of the semi-definiteness constraints involved in the SDPs for~\eqref{sdp_sym_2}, and to the reduced SDP variables.
The fourth column corresponds to the number of moments involved in the SDP. 
The reduction by a factor 2 of the number of pseudo-moment variables in the symmetry-adapted SDP relaxation is consistent with our theoretical estimates from Section~\ref{SYM_MOM}.




\begin{table}[!ht]
 \caption{Comparison of dense~\eqref{sdp_dense} and symmetric~\eqref{sdp_sym_2} relaxations for integrator system, modeled with Yalmip with $d=2k\in \{14,16,18,20\}$.}
\begin{center}
   \begin{tabular}{ | l | c | c | r | }
     \hline
     $d=14$ & cost & time & SDP variables \\ \hline
    Without symmetry & 0.9736 & 13s & 709   \\ \hline
    With symmetry  & 0.9740 & 4.8s & 359  \\
     \hline
     $d=16$ &  &  &  \\ \hline
    Without symmetry & 0.9740 & 28.7s & 1002  \\ \hline
    With symmetry  & 0.9748 & 9.5s & 506  \\
     \hline
    $d=18$  &  &  &  \\ \hline
    Without symmetry & 0.9752 & 81.3s & 1367 \\ \hline
    With symmetry   &  0.9760 & 20.8s & 689  \\
     \hline
    $d=20$  &  &   & \\ \hline
    Without symmetry & 0.9736 & 183s & 1812  \\ \hline
    With symmetry   & 0.9755 & 44.54s & 912 \\
    \hline
   \end{tabular}
 \end{center}
  \label{BETA_1}
\end{table}

\subsubsection{Reconstruction via solving $(P_1)$ in Algorithm~$(A_1)$}
On this toy problem, the solution of~\eqref{opt:relaxed_bis} is unique and equal to $(\mu^\star,\mu_T^\star)$, and we can apply Algorithm~$(A_1)$ described in Section~\ref{uniqueness_recovery}.
We consider the invariant polynomial $Q(x)=x^2$, and thanks to Proposition~\ref{INV_MOM}, we have $\int_{[0,T]} Q(x(t))dt=\int Q(x)d\mu^\star(t,x,u)$. In this setting, the polynomial system~\eqref{INVERT_LIFT} restricted to the state trajectory writes $y(t)=Q(x(t))$, for every $t\in [0,T]$.
This fact allows plotting the image $Q(x(t))$ of one of the two optimal trajectories $x(t)$ as a function of $t$ obtained via Christoffel-Darboux kernels on Figure~\ref{squared}. 
In this case, approximations of the two optimal trajectories can be obtained by taking the square root, modulo a sign choice at the points where $x(t)$ vanishes (here it is only the case at $t=0$, as it can be guessed numerically from the curve of $Q(x(t))$). 

\begin{figure}[!ht]
    \centering
    \subfigure[Rebuilt trajectory of $(x(t))^2$]{\label{split} \includegraphics[scale=0.55]{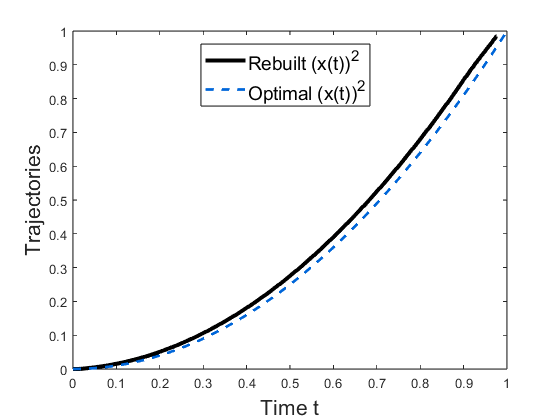}}
    \subfigure[Rebuilt trajectory of $x(t)$]{\label{adm:fig} \includegraphics[scale=0.55]{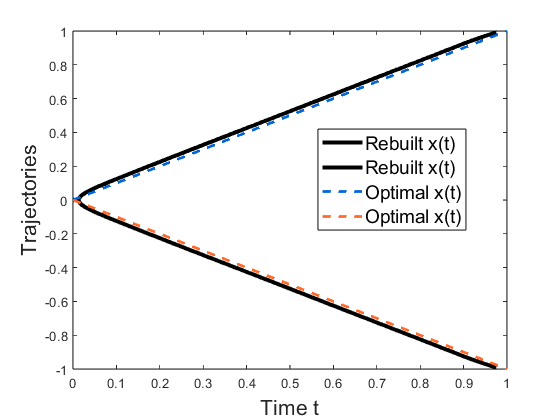}}
    \caption{Rebuilt and theoretical trajectories of Problem~\eqref{INTEG} as a function of $t$, computed with $d=16$ via Christoffel-Darboux Kernels.}
    \label{squared}
\end{figure}

As explained in Section~\ref{INV_POLY_USE}, the control reconstruction is harder because the control can be discontinuous. 
In particular, considering the invariant polynomial $V(u)=u^2$, we have $\int_{[0,T]} V(u(t))dt=\int V(u)d\mu^\star(t,x,u)$. 
This allows us to plot the image $V(u(t))$ as a function of $t$ obtained via Christoffel-Darboux kernels on Figure~\ref{squared:control}. We can guess numerically that the optimal control should be bang-bang. However,  Figure~\ref{squared:control} does not allow recovering $u(t)$ due to its possible switches between the values $\pm 1$.
\begin{figure}[H]
\begin{center}
    \includegraphics[scale=0.55]{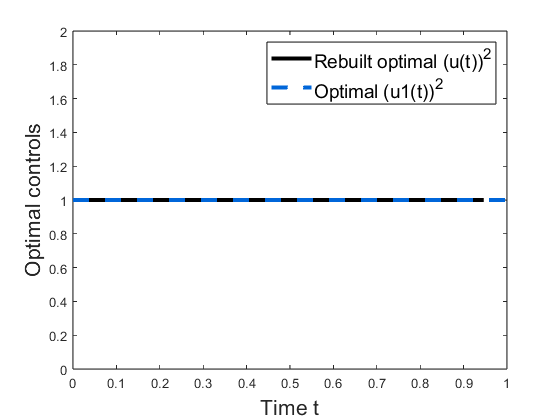}
 \end{center}
 \caption{Rebuilt and theoretical image via $V(u)=u^2$ of optimal control trajectories of Problem~\eqref{INTEG} as a function of $t$, computed with $d=16$ via Christoffel-Darboux Kernels.}
 \label{squared:control}
\end{figure}

\subsubsection{Optimal control reconstruction via solving $(P_2)$ in Algorithm~$(A_1)$}  

 In order to recover an optimal control, we propose to solve the LP on measures~\eqref{opt:lift} via the corresponding moment relaxation~\eqref{sdp_lift}, as illustrated on Figure~\ref{syncadmin}. This method involves only moment substitutions without block diagonalization of moment matrices as in SDP~\eqref{sdp_sym_2}. The computation time is equal to $12,9506s$ with $498$ pseudo-moment variables for $d=16$.


\begin{figure}[H]
    \centering
\includegraphics[scale=0.55]{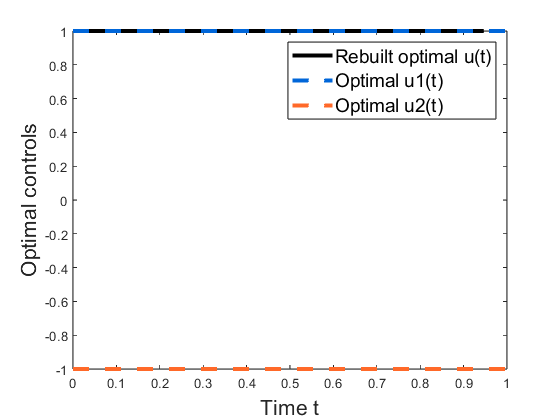}
    \caption{Reconstruction of the control $u(t)$ thanks to the hierarchy~\eqref{sdp_lift}, with $d=2k=16$.}
    \label{syncadmin}
\end{figure}



\subsection{Minimal time inversion of a single qubit}
Consider the following control system, corresponding to the equation of a Qubit on the Bloch Sphere $\mathbb{S}^2$: 
\begin{equation}\label{sistema1}
\dot{x}(t)=(A_0+u(t)A_1)x(t)
\end{equation}
where $x(t)$ belongs to the real two-dimensional sphere $\mathbb{S}^2 \subset \R^3$.
The matrices \[A_0=\begin{pmatrix}
0&-\kappa \cos(\alpha)&0\\
\kappa \cos(\alpha)&0&0\\
0&0&0
\end{pmatrix}
,\]
 
\[A_1=\begin{pmatrix}
0&0&0\\
0&0&-\kappa \sin(\alpha)\\
0&\kappa\sin(\alpha)&0
\end{pmatrix}
,\] are $3\times 3$ real antisymmetric matrices, with given parameters $\kappa\geq 0$ and $\alpha\in (0,\pi/2)$.
The OCP is the following: 
\begin{equation}
\label{opt}
\tag{QUBIT}
\begin{aligned}
	\min \quad & T\geq 0,  \\
\textrm{s.t.} \quad	& \dot{x}(t)=(A_0+u(t)A_1)x(t) \;\;\text{on}\;\; [0,T] \\ 
 & x(t)\in \mathbb{S}^2,\;  u(\cdot) \in [-1,1]\\
 &x(0)=(0,0,1),\; x(T)=(0,0,-1).
\end{aligned}
\end{equation}

With the notations of Section~\ref{FRAME}, we have $X=\mathbb{S}^2=\{x\in \R^3 \mid x_1^2+x_2^2+x_3^2-1\geq 0, \  -(x_1^2+x_2^2+x_3^2)-1\geq 0\}$, $U=[-1,1]=\{u\in \R \mid w_1(u)=1-u^2\geq 0\}$, and $K=(0,0,-1)$, which are compact semi-algebraic sets, satisfying Assumption~\ref{ass:condition}. Moreover, one can check easily that Assumption~\ref{ass:nogap} is satisfied.
The group $G=\{D,\text{Id}_{\R^3}\}$, with $D=\text{diag}(-1,-1,1)$ together with matrix multiplication, acts as a sign symmetry with $\tau(\text{Id}_{\R^3})=1$, $\tau(D)=-1$, in accordance with Definition~\ref{DEF_SYM}.
Notice that $x_0=(0,0,1)$ and $x_1=(0,0,-1)$ are $G$-invariant, in the sense of Definition~\ref{group_inv_def}.
Using the theoretical results of~\cite{BM06}, when $\alpha\geq \pi/4$, we know that there are four optimal control strategies. The optimal controls consist of two bang arcs, with switching times occuring when the trajectory $x(t)$ crosses the equator of $\mathbb{S}^2$, and it is proved analytically that the minimal time is equal to $T_f=2\pi/\kappa$.
More precisely, the four optimal control are $(u_1(t),-u_1(t),u_2(t),-u_2(t))$, where $u_1(t),u_2(t)$ are defined as:

\begin{align*}
u_1(t)\equiv \left\{\begin{array}{l}
1 \ \text{for} \ t\in [0,t_s]\\
-1 \ \text{for} \ t\in [t_s,T_f], \\
\end{array}\right.
\end{align*}
and 
\begin{align*}
u_2(t)\equiv \left\{\begin{array}{l}
1 \ \text{for} \ t\in [0,T_f-t_s]\\
-1 \ \text{for} \ t\in [T_f-t_s,T_f], \\
\end{array}\right. 
\end{align*}
where $t_s=\pi-\arccos{(\cot^2 (\alpha))}$. 
Denote the trajectories associated to those controls by $(x^j(t))_{j\in \{1,\dots,4\}}$, belonging to $\mathbb{S}^2$.
The controls $u_1(t),u_2(t)$ are plotted on Figure~\ref{MERE}. For readability, we did not plot the symmetric controls $-u_1(t),-u_2(t)$.
On the figures~\ref{ROUGE}, \ref{COULE}, and \ref{SOEUR}, we have plotted the $x_1,x_2,x_3$ components of the trajectories corresponding to the optimal controls $u_1(t),u_2(t)$. Note that taking the sign symmetric controls $-u_1(t),-u_2(t)$ achieves a change of sign for $x_1(t)$ and $x_2(t)$, while keeping $x_3(t)$ invariant. 
On Figure~\ref{spherical}, we have plotted the four possible optimal trajectories on the sphere $\mathbb{S}^2$.
\begin{figure}[!ht]
    \centering
    \subfigure[$x_1$-component with $u_1(t)$ or $u_2(t)$]{\label{ROUGE} \includegraphics[scale=0.54]{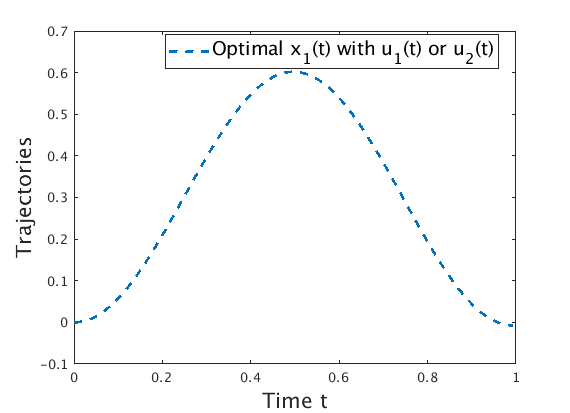}}
    \subfigure[$x_2$-component with $u_1(t)$ or $u_2(t)$]{\label{COULE} \includegraphics[scale=0.54]{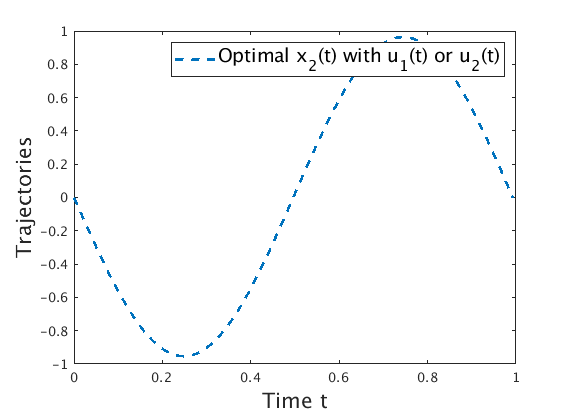}}
     \subfigure[$x_3$-component with $u_1(t)$ or $u_2(t)$]{\label{SOEUR} \includegraphics[scale=0.54]{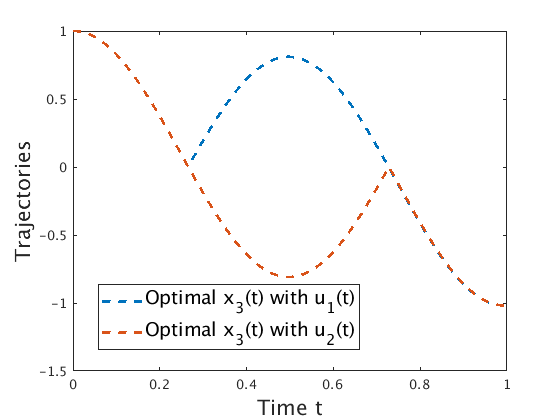}}
       \subfigure[Optimal controls $u_1(t),u_2(t)$]{\label{MERE} \includegraphics[scale=0.54]{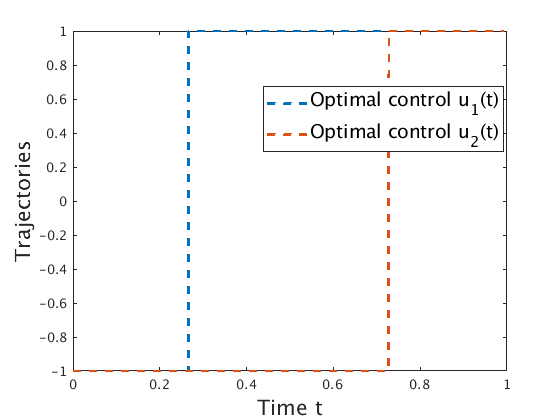}}
    \caption{Components of optimal trajectories and optimal controls modulo sign symmetry.}
    \label{syncadm_opti}
\end{figure}

\begin{figure}[!ht]
    \centering
    \subfigure[Optimal trajectories around the North pole. The pole is plotted as a red dot. ]{\label{ROUGES} \includegraphics[scale=0.55]{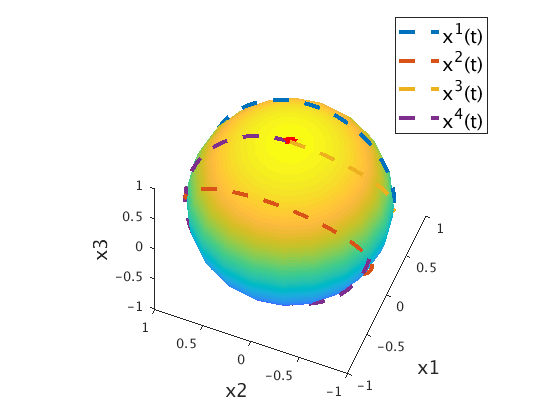}}
    \subfigure[Optimal trajectories around the South pole. The pole is plotted as a red dot.]{\label{COULES} \includegraphics[scale=0.55]{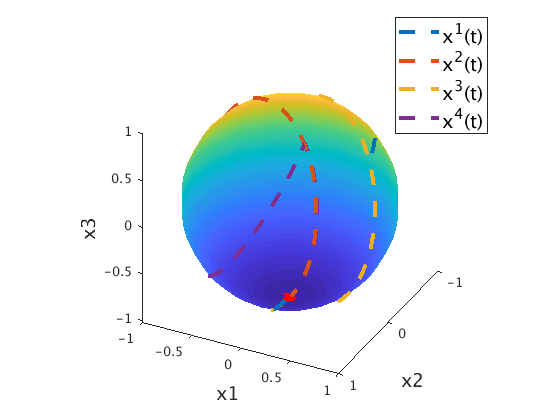}}
    \caption{The four optimal trajectories $(x^1(t),x^2(t),x^3(t),x^4(t))$ plotted on the sphere in different colors}
    \label{spherical}
\end{figure}

\subsubsection{Cost computations}
We assume that $\kappa=\frac{\sqrt{10}}{2}$, $\alpha=\arctan(3)\approx 1,25\geq \pi/4$. In this case, the theoretical minimal time is $\rho=2\pi/\kappa\approx 0.9935$.
Table~\ref{SPIN_EFFICIENCY_2} displays our numerical results, providing the lower bounds for the cost, with a relaxation order $d=2k\in \{6,8,10,16\}$, obtained respectively with the relaxations~\eqref{sdp_dense} and~\eqref{sdp_sym_2}, modeled with Yalmip.  As in the integrator we notice a better reduction of the computational time compared to previous case due to the block structure of the semi-definitness constraints involved in the SDPs for~\eqref{sdp_sym_2}.
Note that we have not been able to perform the experiments for $d=16$ without exploiting symmetry because our computational device ran out of memory as indicated by the abbreviation ``OoM'' in the table.

\begin{table}[!ht]
 \caption{Comparison of dense~\eqref{sdp_dense} and symmetric~\eqref{sdp_sym_2} relaxations for qubit system, with Yalmip modeling, with $d=2k\in \{6,8,10\}$.}
\begin{center}
   \begin{tabular}{ | l | c | c | r |}
     \hline
     $d=6$ & cost & time & SDP variables\\ \hline
    Without symmetry & 0.7708 & 1.7s & 672\\ \hline
    With symmetry  & 0.7708 & 1.7s & 346\\
     \hline
     $d=8$ &  &  &  \\ \hline
    Without symmetry & 0.8758 & 16.2s & 1782\\ \hline
    With symmetry  & 0.8758 & 5.5s & 906 \\
     \hline
    $d=10$  &  &  & \\ \hline
    Without symmetry & 0.9244 & 226s & 4004 \\ \hline
    With symmetry   & 0.9244 & 55.5s & 2023\\
     \hline
     $d=16$  &  &  & \\ \hline
    Without symmetry & 0.9585 & OoM & 25194 \\ \hline
    With symmetry   & 0.9585 & 16534s & 12642\\
      \hline
   \end{tabular}
 \end{center}
 \label{SPIN_EFFICIENCY_2}
\end{table}



\subsubsection{Optimal trajectory reconstruction using Algorithm~$(A_2)$~\label{ETLABETE}}
Contrary to the case of the integrator~\eqref{INTEG} there is no guarantee of uniqueness for solutions of Problem~\eqref{opt:relaxed_bis} associated to Problem~\eqref{opt}, so that we propose to apply Algorithm~$(A_2)$ using symmetries via the application of successive SDPs: first~\eqref{sdp_sym_2} in order to  obtain a lower approximation $\rho_k^G$ of the optimal cost, then~\eqref{sdp_selection_sym} to recover the image of optimal trajectories by invariant polynomials. 
The efficiency is compared to the dense case which does not take into account symmetries implemented in Appendix~\ref{without_sym_rec}.
 A basis of homogeneous generating invariant polynomials is given for the $x$-component by $P_1(x_1,x_2,x_3)=x_1^2$, $P_2(x_1,x_2,x_3)=x_2^2$, $P_3(x_1,x_2,x_3)=x_1x_2$, $P_4(x_1,x_2,x_3)=x_3$, and for the $u$-component by $V(u)=u^2$.
In this setting, the polynomial system~\eqref{INVERT_LIFT} restricted to the state trajectory writes $y_j(t)=P_j(x(t))$, for every $j\in \{1,\dots,4\}$, and $t\in [0,T]$.
By application of the SDP~\eqref{sdp_selection_sym} in Algorithm~$(A_2)$, we recover approximately the moments of an extreme point $(\mu^\star,\mu_T^\star)$ in the set~$\mathcal{M}^G$ associated with Problem~\eqref{opt:relaxed}. 
We can then approximate the moments of the curves $x_1(t)^2$, $x_2(t)^2$, $x_3(t)$, for optimal trajectories $(x_1(t),x_2(t),x_3(t))$ for Problem~\eqref{opt}. 
On Figure~\ref{RECOCO}, we have plotted the reconstruction of the latter curves thanks to the Christoffel-Darboux kernels for $d=2k=10$, for which the computation time is $81.1s$. 
The curves corresponding to optimal trajectories are plotted in dashed lines.

\subparagraph{Solving $(P_1)$ as third step of Algorithm~$(A_2)$ and selection of trajectories}
Optimal trajectories of Problem~\eqref{opt} then have to be selected among the Lipschitz curves which are solutions of
$y_j(t)=P_j(x(t))$, for every $j\in \{1,\dots,4\}$, and $t\in [0,T]$, as described in Section~\ref{INV_POLY_USE} with Equation~\eqref{INVERT_LIFT}. A feasibility test can be done, which consists in solving approximately the linear programs~\eqref{opt:relaxed_u} via the SDP~$(Q_{k}^u)$ from Appendix~\ref{CONTROL_RECO}, choosing among the possible trajectories $x(t)$ allowed by the previous step.
We checked numerically in~\url{TRAJECTORY_TEST.m} the infeasibility of the curve whose $x_2$-component is equal to $\pm \sqrt{y_2(t)}$, by checking SDP infeasibility of~$(Q_{k}^u)$ with $d=6$. This fact ensures that the $x_2$-component of optimal trajectories change sign and are composed, up to a global sign change, of two branches $t\mapsto +\sqrt{y_2(t)}$ for $t\in [0,t^\star]$ then $t\mapsto -\sqrt{y_2(t)}$ for $t\geq t^\star$  where $t^\star>0$, as claimed by the structure of optimal solutions plotted on Figure~\ref{syncadm_opti}.
Note however that this step requires the knowledge of the moments of the function $t\mapsto \pm\sqrt{y_2(t)}$, which can be done thanks to computing approximate moment integrals of the square root of Christoffel-Darboux approximant of $y_2(t)$. In order to avoid this difficulty and possible accumulation of errors, the computations have been made by computing the corresponding moments thanks to the knowledge of the theoretical optimal trajectory.


\subparagraph{Control reconstruction via solving $(P_2)$ as third step of Algorithm~$(A_2)$}
Concerning the recovery of the optimal control, the same discontinuity problem occurs as for the integrator example of Section~\ref{INTEGRATOR}.
We propose to  solve the linear program and \eqref{opt:lift} via the corresponding moment relaxation~\eqref{sdp_lift}.
On Figure~\ref{syncadm1}, we illustrate the results obtained by applying the SDP~\eqref{sdp_lift}, allowing to recover approximately an optimal control $u(t)$ via Christoffel-Darboux kernels, which is plotted on Figure~\ref{split:1}. Due to moment substitutions, the computation time of~\eqref{sdp_lift} with $d=2k=10$ is equal to $118s$, with $3040$ pseudo-moment variables, which is improved w.r.t. the dense case.
The control is then plugged in the dynamics~\eqref{sistema1}, and we obtain the corresponding trajectories on Figure~\ref{adm:fig:3} and \ref{adm:fig:4}.

\begin{remark}
Note that the time needed for the second step of Algorithm~$(A_2)$ would have been approximately twice the time of its first step using Yalmip modeling, as the two steps are two symmetry-reduced problems of the same size.
However, the third step of Algorithm~$(A_2)$ may require supplementary time when solving Problem~$(P_2)$ via~\eqref{sdp_lift}, due to a reduction which only involves moment substitutions.
\end{remark}

\begin{figure}[!htbp]
    \centering
    \subfigure[Reconstruction of $(x_1(t))^2$]{\label{spin1} \includegraphics[scale=0.38]{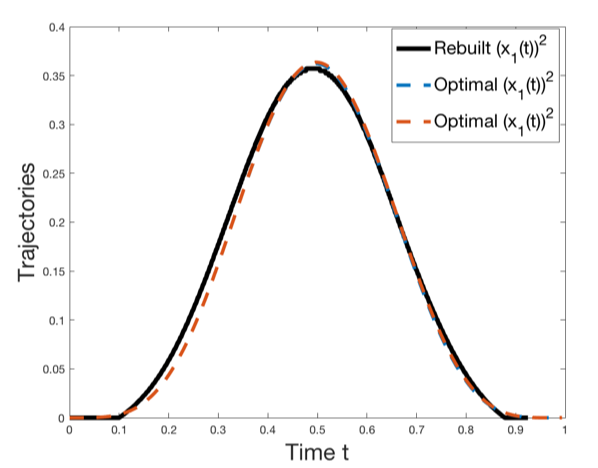}}
    \subfigure[Reconstruction of $(x_2(t))^2$]{\label{spin2} \includegraphics[scale=0.38]{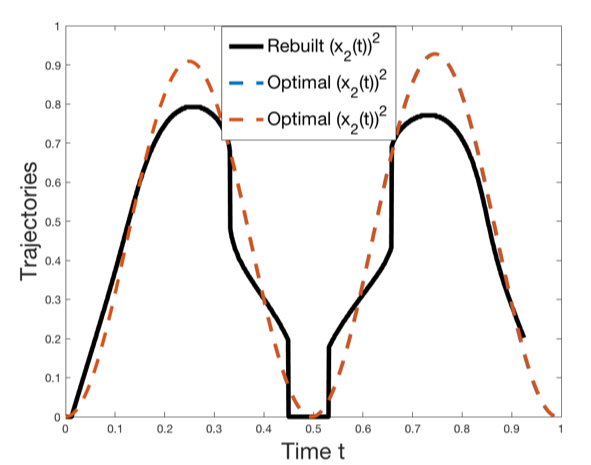}}
     \subfigure[Reconstruction of $(x_3(t))$]{\label{spin3} \includegraphics[scale=0.38]{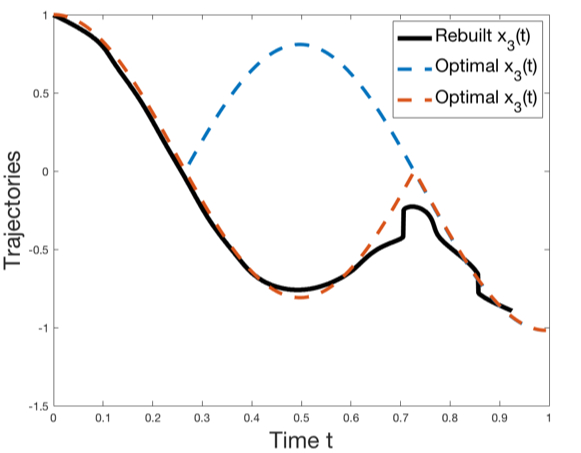}}
    \caption{Reconstruction of the image of trajectories via invariant polynomial with CD kernels, with $d=2k=10$.}
    \label{RECOCO}
\end{figure}


\begin{figure}[!htbp]
    \centering
    \subfigure[Reconstruction of the control $u(t)$ with~\eqref{sdp_lift} ]{\label{split:1} \includegraphics[scale=0.5]{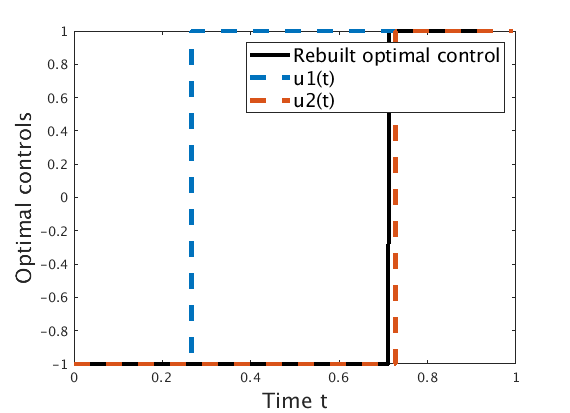}}
    \subfigure[First part of the rebuilt trajectory $x(t)$]{\label{adm:fig:3} \includegraphics[scale=0.55]{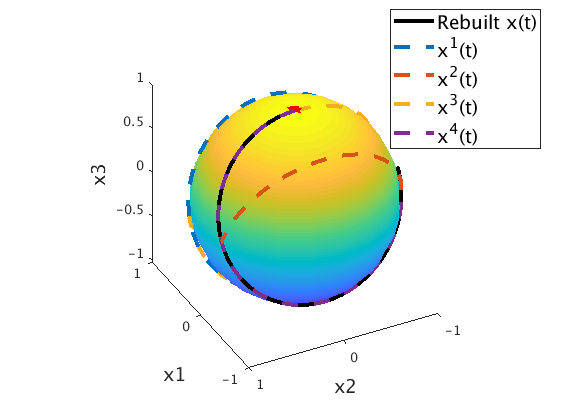}}
     \subfigure[Second part of the rebuilt trajectory $x(t)$]{\label{adm:fig:4} \includegraphics[scale=0.55]{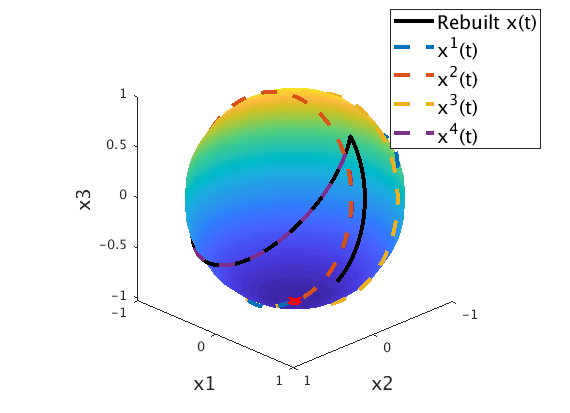}}
    \caption{Rebuilt control with~\eqref{sdp_lift} for $d=2k=10$ on Figure~\ref{split:1}. The control is plugged into the dynamics~\eqref{sistema1} on the simulations plotted on the figures~\ref{adm:fig:3} and \ref{adm:fig:4}.}
    \label{syncadm1}
\end{figure}

\section{Conclusion}
In this work, we have shown that using symmetries is an efficient way to increase the computational efficiency for solving optimal control problems using the moment-sum-of-squares hierarchy, treating both reduction of problem size as well subsequent recovery of of optimal trajectories. It turned out that symmetry reduction allows to recover the image of optimal trajectories of the state variables by invariant polynomials in a very efficient way. Then we proposed a method via moment substitution allowing to recover an optimal control.
In further works, we plan to study systems exhibiting more complex symmetries, and computer algebra methods to solve the polynomial system obtained by symmetry reduction. A longer term perspective is to study Lie group symmetries which are very common in physics.

\section*{Acknowledgments}
The authors are very grateful to Evelyne Hubert for suggesting the trajectory recovery strategy based on invariant polynomials as well as to Emmanuel Trélat for fruitful discussions. 
This work was supported by the LabEx CIMI (ANR-11-LABX-0040), the FastQI grant funded by the Institut Quantique Occitan, the PHC Proteus grant 46195TA, the European Union’s Horizon 2020 research and innovation programme under the Marie Sk{\l}odowska-Curie Actions, grant agreement 813211 (POEMA), by the AI Interdisciplinary Institute ANITI funding, through the French ``Investing for the Future PIA3'' program under the Grant agreement n${}^\circ$ ANR-19-PI3A-0004 as well as by the National Research Foundation, Prime Minister’s Office, Singapore under its Campus for Research Excellence and Technological Enterprise (CREATE) programme.

\appendix
\section{Appendix~\label{APP}}
\subsection{Proof of Proposition~\ref{INVARIANCE}~\label{PROOF_INVARIANCE}}
\proof
For the proof of the first point, let $(\mu,\mu_T)\in \mathcal{M}^G$. By $G$-invariance, we have $R(\mu)=\mu$ and $R_T(\mu_T)=\mu_T$, so that $(\mu,\mu_T)\in \{\left(R(\mu),R_T(\mu_T)\right)\mid \ (\mu,\mu_T)\in \mathcal{M}\}$.
Conversely, for $(\mu,\mu_T)\in \mathcal{M}$,  Lemma~\ref{Liouville_invariant} proves that $(R(\mu),R_T(\mu_T))\in \mathcal{M}^G$, as the group $G$ is finite.
Concerning weak-$\star$ compactness, as $R$ and $R_T$ are continuous operators for the weak-$\star$ topology, the set $\mathcal{M}^G$ is closed as the inverse image of $(0,0)$ by the continuous mapping $(\mu,\mu_T)\mapsto \left(R(\mu)-\mu,R_T(\mu_T)-\mu_T\right)$.
It follows that the set $\mathcal{M}^G$ is weak-$\star$ compact as a closed subset of the weak-$\star$ compact set $\mathcal{M}$. 

     For the proof of the second point, assume by contradiction that there exist $(\mu,\mu_T)\in \mathcal{M}$ which are not occupation measures and such that $(R(\mu),R_T(\mu_T))$ is an extreme point of $\mathcal{M}^G$. 
    By Proposition~\ref{EXTREME}, we have that $(\mu, \mu_T)$ satisfy
    \[\int_{[0,T] \times X \times U} \phi(t,x,u) d\mu(t,x,u)= \int_{S} \left(\int_{[0,T] \times X \times U} \phi(t,x,u) d\gamma(t,x,u)\right) d\nu(\gamma),\] and \[\int_{K} \varphi(x) d\mu_T(x)= \int_{S_T} \left(\int_{K} \varphi(x) d\gamma(x)\right) d\nu_T(\gamma),\] for every $(\phi,\varphi)\in C^1([0,T] \times X \times U)\times C^1(K)$, where $(\nu,\nu_T)$ are probability measures on the sets $S$ and $S_T$ of occupation measures, in the sense of Definition~\ref{OCCUP_MEAS}.
From the linearity of $R$ and $R_T$ that $R(\mu)$ and $R_T(\mu_T)$ satisfies
 \[\int_{[0,T] \times X \times U} \phi(t,x,u) d(R(\mu))(t,x,u)= \int_{S} \left(\int_{[0,T] \times X \times U} \phi(t,x,u) d(R(\gamma))(t,x,u)\right) d\nu(\gamma),\] and \[\int_{K} \varphi(x) d(R_T(\mu_T))(x)= \int_{S_T} \left(\int_{K} \varphi(x) d(R_T(\gamma))(x)\right) d\nu_T(\gamma),\] for every $(\phi,\varphi)\in C^1([0,T] \times X \times U)\times C^1(K)$.
 As $(\mu,\mu_T)\notin \mathcal{E}=S\times S_T$, we have that $(\nu,\nu_T)$ is not supported on a single point of $\mathcal{E}$. We obtain a contradiction since $(R(\mu),R_T(\mu_T))$ is assumed to be an extreme point of $\mathcal{M}^G$.
 
 The last statement follows from Krein-Milmann theorem in the infinite dimensional setting~\cite[Chapter III, Section 4]{barvinok2002course}.

\subsection{Symmetric hierarchy without block diagonalization~\label{A22}}
For two $G$-invariant pseudo-moment sequences $y$ and $z$, consider the following symmetry-adapted relaxation hierarchy, indexed by $k\geq k_0$, where $k_0$ is chosen as in Section~\ref{LET_DENSE}, denoted by~\eqref{sdp_sym_1}:
\begin{equation} \tag{$Q_k^I$}\label{sdp_sym_1}
\begin{aligned}
r_k := \inf\limits_{y,z} \quad & L_z(h)+L_y(H)\\
\textrm{s.t.} \quad &   M_k(y),M_k(z)\succeq 0\\
   & M_{k-\lceil\text{deg}(v_j)/2\rceil}(v_j z(x))\succeq 0  \\
   &M_{k-\lceil\text{deg}(w_j)/2\rceil}(w_j z(u))\succeq 0 \\
  &M_{k-\lceil\text{deg}(\theta_j)/2\rceil}(\theta_j y)\succeq 0 \\
  &M_{k-1}(t(1-t) z(t))\succeq 0 \\
  & L_y(\phi)-L_z\left(\partial \phi/ \partial t+\langle \nabla_x \phi, f \rangle \right)=\phi(0,x_0), \ \forall \phi=(t^s x^\alpha) \in \R[t,x] \ \text{s.t.} \ s+|\alpha|\leq 2k+1-\text{deg}(f)\\
\end{aligned}
\end{equation}
The latter semi-definite conditions being equivalent to those of~\eqref{sdp_sym_2}, we have the equality $r_k=\rho_k^G$, for every $k\geq k_0$.

\subsection{Dynamical test of the solutions of Equation~\eqref{INVERT_LIFT} and control recovery~\label{CONTROL_RECO}}
 In general, not every Lipschitz curve solving Equation~\eqref{INVERT_LIFT} in its $x$-component is feasible nor optimal for Problem~\eqref{optimalcontrol}.
It is then a crucial task to propose a numerical method testing the feasibility and optimality of $x(\cdot)$ for Problem~\eqref{optimalcontrol} and in the positive case recovering the associated optimal control $u(\cdot)$.

 \begin{assumption}\label{KNOW_TRAJ}
 We know a Lipschitz curve $x(\cdot)$ solving Equation~\eqref{INVERT_LIFT} in its $x$-component and/or its associated moments w.r.t. the Lebesgue measure on $[0,T]$. 
 \end{assumption}

 For $\mu\in \mathscr{M}_+([0,T]\times U)$, consider the following reduced Liouville equation
 \begin{equation}\label{Liouville_u}
\phi(T,x(T)) -\phi(0,x_0) = \int_{[0,T]\times \mathbb{R}^n\times U} \frac{\partial \phi}{\partial t} + \left\langle \nabla_x\phi(t,x(t)), f(x(t),u)\right\rangle\,d\mu(t,u),
\end{equation}
for every $\phi\in C^1([0,T]\times X)$.
 
 Introduce the following linear program:
 \begin{equation} \label{opt:relaxed_u}
\begin{aligned}
\inf\limits_{\mu} \quad & \int_{[0,T]\times U} h(t,x(t),u)d\mu(t,u)\\
\textrm{s.t.} \quad &   \mu \text{\;\,satisfy (\ref{Liouville_u})}\\
  &\mu\in \mathscr{M}_+([0,T]\times U).  \\
\end{aligned}
\end{equation}
Note that, as the curve $x(\cdot)$ is fixed, the term $H(x(T))$ involved in the cost of Problem~\eqref{optimalcontrol} can be removed, and if there exists $u(\cdot)$ such that $(x(\cdot),u(\cdot))$ is feasible and optimal for Problem~\eqref{optimalcontrol}, then the set of solutions of Problem~\eqref{opt:relaxed_u} is non-empty and contains the occupation measure $d\nu(t,u)=\delta_{u(t)}(du)dt$.

In order to solve Problem~\eqref{opt:relaxed_u}, we propose the following hierarchy, 
indexed by $k\geq k_0$, for pseudo moment sequences $z=(z_\gamma)_{\gamma\in \mathcal{B}}$ and $y=(y_\alpha)_{\alpha\in \tilde{\mathcal{B}}}$, where $\mathcal{B}$ is a monomial basis of $\R[t,x,u]$, and $\tilde{\mathcal{B}}$ is a monomial basis of $\R[x]$.
It is denoted by $Q_k^u$ and defined as a slight modification of~\eqref{sdp_dense}, by adding the conditions $L_z(t^s x^\alpha)=L_{z^\star}(t^s x^\alpha), \forall s,\alpha \ \text{s.t.} \ s+|\alpha| \leq 2k$ 
and $L_y(x^\alpha)=L_{y^\star}(x^\alpha), \forall \alpha\in \N^n \ \text{s.t.} \ |\alpha| \leq 2k$, where $z^\star, y^\star$ are the moment sequences associated with the measures $d\nu(t,x)=\delta_{x(t)}(dx) dt$ and $d\nu_T(x)= \delta_{x(T)}(dx)$, that are known a priori under Assumption~\ref{KNOW_TRAJ}.

We have the following properties:
\begin{itemize}
 \item If there exists $k\geq k_0$ such that $(Q_{k}^u)$ is unfeasible, then the curve $x(\cdot)$ is unfeasible for Problem~\eqref{optimalcontrol}, and the solution has to be rejected. 
 \item In the case where $(Q_{k}^u)$ is feasible, check optimality, by checking that $\inf Q_k^u + H(x(T))-\rho_k^G$ is close to 0 when $k\to +\infty$. 
 \end{itemize}

\begin{remark}
In order to avoid approximation issues, in our numerical implementations we replace the constraints $L_z(t^s x^\alpha)=L_{z^\star}(t^s x^\alpha), \forall s,\alpha \ \text{s.t.} \ s+|\alpha| \leq 2k$ 
and $L_y(x^\alpha)=L_{y^\star}(x^\alpha), \forall \alpha\in \N^n \ \text{s.t.} \ |\alpha| \leq 2k$
by
$|L_z(t^s x^\alpha)-L_{z^\star}(t^s x^\alpha)|\leq \epsilon, \forall s,\alpha \ \text{s.t.} \ s+|\alpha| \leq 2k$,
 $|L_y(x^\alpha)-L_{y^\star}(x^\alpha)|\leq \epsilon, \forall \alpha\in \N^n \ \text{s.t.} \ |\alpha| \leq 2k$,  with $\epsilon>0$ small enough.
\end{remark}

\subsection{Optimal trajectory reconstruction without using symmetries~\label{without_sym_rec}}
We propose to achieve the reconstruction of trajectories without using symmetries described in Section~\eqref{LINEAR_FUNK}. 
The SDP~\eqref{sdp_selection1} allows finding the approximate values of the moments of an optimal trajectory for Problem~\eqref{opt} in the variables $(t,x,u)$.
We have plotted the rebuilt optimal control $u(t)$ and trajectory components $(x_1(t),x_2(t),x_3(t))$ on Figure~\ref{syncadm_opti_rec}, to be compared with the optimal solutions obtained with the four optimal controls $(u_1(t),-u_1(t),u_2(t),-u_2(t))$, which are plotted in dashed lines. We notice that the control curve is close to the optimal one, contrary to the recovered trajectories, especially the $x_2$-component.
\begin{figure}[!ht]
    \centering
    \subfigure[Reconstruction of the control $u(t)$]{\label{ROUGEb} \includegraphics[scale=0.55]{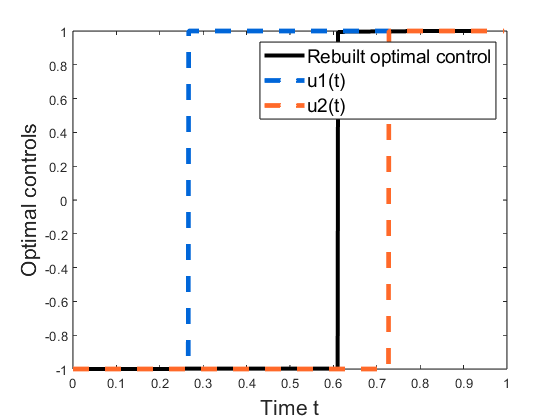}}
    \subfigure[Reconstruction of $x_1(t)$]{\label{COULEb} \includegraphics[scale=0.55]{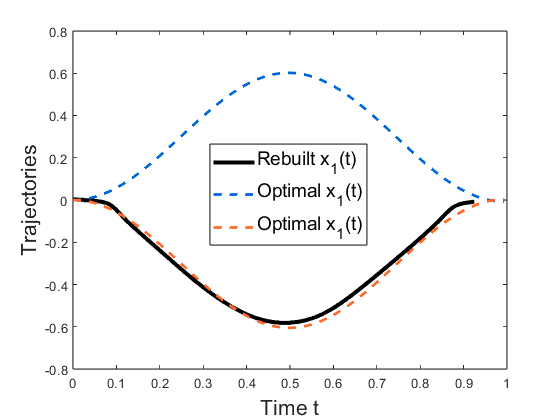}}
     \subfigure[Reconstruction of $x_2(t)$]{\label{SOEURb} \includegraphics[scale=0.55]{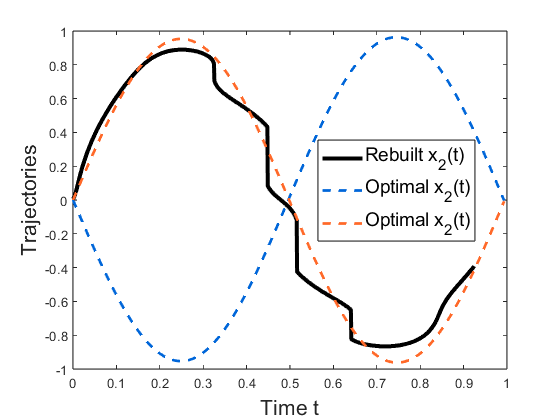}}
       \subfigure[Reconstruction of $x_3(t)$]{\label{MEREb} \includegraphics[scale=0.55]{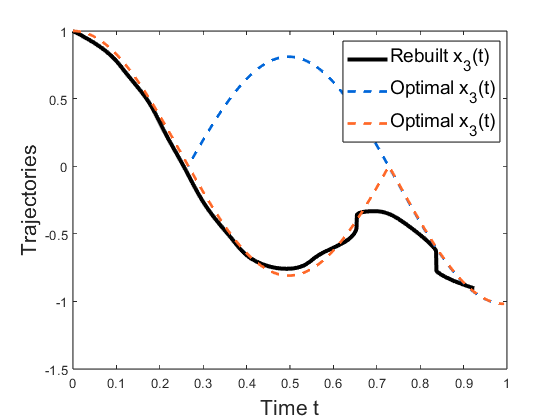}}
    \caption{Reconstruction with (CD) kernel of optimal control and trajectories, without symmetry for Problem~\eqref{opt}, with $d=2k=10$, compared to theoretical optimal solutions in dashed lines.}
    \label{syncadm_opti_rec}
\end{figure}
For $d=2k=10$, we obtain the reconstruction of Figure~\ref{apost} by plugging the control obtained on Figure~\ref{ROUGEb} into the dynamics of Equation~\eqref{sistema1}. On the figure, the simulated trajectory is plotted in black, and the optimal trajectories corresponding to the controls $(u_1(t),-u_1(t),u_2(t),-u_2(t))$ are plotted in dashed lines. 
\begin{figure}[!ht]
    \centering
    \subfigure[First part of the rebuilt trajectory $x(t)$]{\label{reconstr} \includegraphics[scale=0.55]{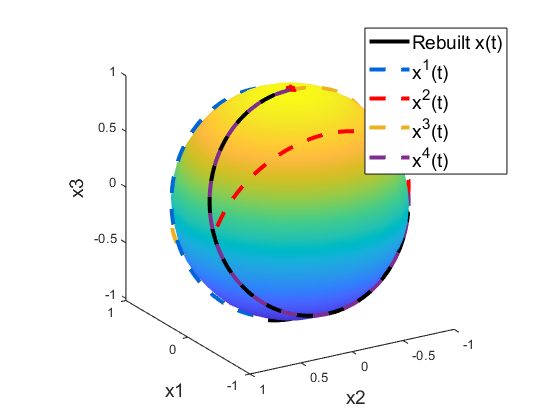}}
    \subfigure[Second part of the rebuilt trajectory $x(t)$]{\label{recon} \includegraphics[scale=0.55]{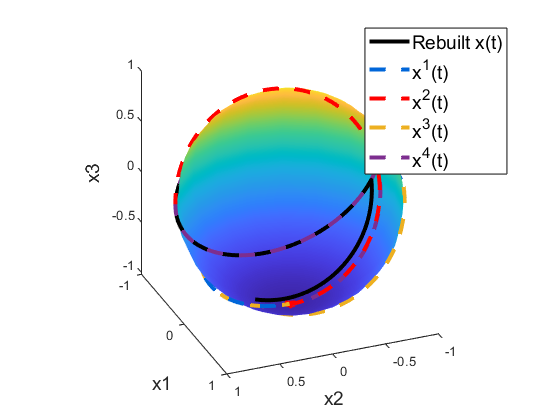}}
    \caption{Simulation of the trajectory of~\eqref{opt} in black by plugging the rebuilt control by SDP~\eqref{sdp_selection1} into the dynamics~\eqref{sistema1}, without symmetry, with $d=2k=10$, compared to theoretical optimal trajectories in dashed lines.}
    \label{apost}
\end{figure}

\bibliographystyle{abbrv}
\bibliography{OCP_main}

\end{document}